\documentclass[a4paper,12pt]{article}

\usepackage{amssymb}
\usepackage{epsfig}
\usepackage{amsfonts}
\usepackage{amsmath}
\usepackage{euscript}
\usepackage{amscd}
\usepackage{amsthm}
\usepackage{theoremref}
\usepackage{enumerate}
\usepackage{array}
\usepackage{mathrsfs}
\usepackage{tikz-cd}
\usepackage{graphicx}
\usepackage{tikz}
\usetikzlibrary{calc}
\DeclareMathAlphabet{\mathpzc}{OT1}{pzc}{m}{it}
\DeclareMathAlphabet{\mathpzc}{OT1}{pzc}{m}{it}

\newtheorem{thm}{Theorem}[section]
\newtheorem{lem}[thm]{Lemma}
\newtheorem{prop}[thm]{Proposition}

\newtheorem{rem}[thm]{Remark}

\newtheorem{Defn}[thm]{Definition}

\newcommand{\Z}{\mathbb Z}

\newcommand{\ZZ}{\Z_{\geqslant 0}}
\newcommand{\ZY}{\Z_{\geqslant 1}}
\newcommand{\x}{x_1,\dots,x_m}

\newcommand{\kk}{\mathbb{K}}

\title{ 
Regular Borel subalgebras of\\
 the Lie Algebra of Aut($\mathbb{A}^2$)
}
\author{
		Ananya Pal\\
		{\small{\it Faculty of Computer Science, HSE University,}}\\ 
		{\small{\it Pokrovsky Boulevard 11, Moscow
				109028, Russia}}\\
		{\small {\it e-mail : apal@hse.ru,
				palananya1995@gmail.com }}\\
}

\begin{document}
	\date{}
	\maketitle
	
		\abstract
		In this paper, we find all regular Borel subalgebras of Lie(Aut($\mathbb{A}^2$)), i.e., maximal solvable subalgebras generated by homogeneous derivations with respect to the standard $\mathbb{Z}^2$-grading. 
		 It follows that a regular Borel subalgebra has derived length $2$ or $3$.
		We also describe isomorphism classes of such subalgebras.
		
		On the way, we give a combinatorial description of the regular Abelian subalgebras and the regular solvable subalgebras of Lie(Aut($\mathbb{A}^2$)).

		\smallskip
		\noindent
		{\small {{\bf Keywords}. Lie algebra, Lie algebra isomorphism, Abelian subalgebra, solvable subalgebra, Borel subalgebra, derivation, homogeneous derivation.}}
		
		\smallskip
		
		\noindent
		{\small {{\bf 2020 MSC}. Primary:  
				17B30, %(Solvable, nilpotent (super)algebras)
17B66; %(Lie algebras of vector fields and related (super) algebras) 
Secondary: 
17B65, %(Infinite-dimensional Lie (super)algebras)
 17B05, %(Structure theory for Lie algebras and superalgebras)
17B70.%(Graded Lie(super)algrbras)

%$k$ replaced by $\mathbb{K}$...$k$ does not look good.			
\section{Introduction:}\label{I}
Throughout the paper, $\kk$ denotes a field of characteristic zero and $\mathbb{A}^n$ denotes the affine space over $\kk$ of dimension $n$.
The symbols $x_1,\dots,x_n,x,y$, etc., will denote indeterminates over the field $\kk$.

Recall that a  Borel  subalgebra of a Lie algebra $\mathscr{G}$ is a maximal solvable Lie subalgebra	of $\mathscr{G}$ (with respect to inclusion of subalgebras).
A {close} topic to Borel subalgebra is Borel subgroup.
A Borel subgroup of an algebraic group~$G$ is a maximal Zariski-closed, connected and solvable algebraic subgroup of $G$.
On the other hand, a Borel subgroup of an ind-group $\mathfrak{G}$ is a maximal solvable connected subgroup of $\mathfrak{G}$.
%The study of Borel subalgebra (resp. Borel subgroup) first of finite dimensional Lie algebras (resp. algebraic group) and now of infinite dimensional Lie algebras (resp. ind-group) has been an important topic of research from its introduction in the middle of the twentieth century.
The study of Borel subgroups and Borel subalgebras are very important in their own right.
In a way, they are the building blocks of group (algebraic group and ind-group) and Lie algebra, respectively.

Sometimes, the study of Borel subgroups and Borel subalgebras are intertwined.
The most well known result in this context is the following: there is a natural bijection between the Borel subgroups of a connected affine algebraic group $G$ over $\kk$ and the Borel subalgebras of Lie(${G}$) \cite[Theorem~13.1]{HL2}. 
Moreover, in this case, all the Borel subgroups are conjugate \cite[Theorem~21.3]{HL2}, and all the Borel subalgebras are Ad-conjugate. 
The latter is also true for all finite dimensional Lie algebras over $\kk$.
In particular, all the Borel subalgebras of a finite dimensional Lie algebra over $\kk$ are conjugate under the action of the automorphism group \cite[Theorem~16.4]{HL}.
However, over a field of positive characteristic this is not always true. 
For example, over a field of characteristic $p> 3$, the first Witt Lie algebra $W_1$ has precisely two conjugacy classes of Borel subalgebras \cite[Theorem 4.2]{YC}.
For conjugacy classes of Borel subalgebras of $W_n$ over a field of characteristic $p>3$, one can see \cite[Theorem~4.6]{BS}.

Now we turn our attention to the ind-groups and their Lie algebras. 
The automorphism group Aut($\mathbb{A}^n$), for $n\geqslant 2$, has a natural affine ind-group structure~\cite{FK}. 
The corresponding Lie(Aut($\mathbb{A}^n$)) is a subalgebra of the Lie algebra of derivations Der($\kk[x_1,\dots,x_n]$). 
Moreover, Der($\kk[x_1,\dots, x_n]$)~=~Vec($\mathbb{A}^n$) consists of all polynomial vector fields over $\mathbb{A}^n$ and the Lie algebra Lie(Aut($\mathbb{A}^n$)) = Vec$^c(\mathbb{A}^n)$ consists of polynomial vector fields over $\mathbb{A}^n$ with constant divergence \cite[Proposition 15.7.2]{FK}. 
{}The above one to one correspondence between Borel
subgroups of algebraic groups and Borel subalgebras of their corresponding Lie algebras is generally not satisfied for ind-groups.} 
One such example is the ind-group Aut($\mathbb{A}^2$).
By \cite[Theorem~1]{BEE} it is known that every Borel subgroup of Aut($\mathbb{A}^2$) is conjugate to the triangular (de Jonqu\'eres) subgroup JONQ$^{\pm}(\mathbb{A}^2)$ of Aut($\mathbb{A}^2$).
In fact, Lie subalgebras corresponding to the triangular subgroups (triangular subalgebras $\mathfrak{j}_2^{\pm}$) are Borel subalgebras of Lie(Aut($\mathbb{A}^2$)) \cite[Proposition~4.9]{AZ3} and they are pairwise conjugate
under the adjoint action of Aut($\mathbb{A}^2$). 
Recently, Arzhantsev and Zaidenberg in \cite[Remark 4.7]{AZ3}, have shown the existence of Borel subalgebra of Lie(Aut($\mathbb{A}^2$)) not
corresponding to any Borel subgroup of Aut($\mathbb{A}^2$).

In this paper, we study the Borel subalgebras of Lie(Aut($\mathbb{A}^2$)) generated by homogeneous derivations (with respect to the standard $\mathbb{Z}^2$-grading) and name them {\it regular Borel subalgebras}.
They can be considered the next non-trivial class after Borel subalgebras corresponding to the Borel subgroups of Lie(Aut($\mathbb{A}^2$)).
We give a structure theorem for them (\thref{MT1}).
Precisely, we prove the following (For notation check Section~\ref{N}~notation 21):

\medskip
\noindent
{\bf Theorem}:
\begin{enumerate}[\rm(a)]
	\item Every regular Borel subalgebra of Lie(Aut($\mathbb{A}^2$)) has derived length $2$ or $3$. 
	\item The only metabelian regular Borel subalgebra is of the form $\mathscr{G}(0,0,(i+1)_i,\pmb{0},\pmb{0})$.
	\item The regular Borel subalgebras of derived length $3$ are of the following form:
	\begin{center}
		$\mathscr{G}(n,1,\pmb{\lambda},\pmb{\alpha},\pmb{\gamma})$, for some $n\in \ZY\cup\{-1\}$	
	\end{center}
	and some non-zero $\pmb{\lambda}, 
	\pmb{\alpha}, \pmb{\gamma}\in$ Map($\ZY,\kk$)
	 (specified in \thref{IC0}).
\end{enumerate}

\smallskip
\noindent
Here, for convenience we presented the result in abstract Lie algebra setup.
However, the statement of \thref{MT1} gives an implicit description (elements being homogeneous derivations) of regular Borel subalgebras.

As a corollary of this result, given any homogeneous derivation $\partial$ we have identified all the regular Borel subalgebras containing $\partial$.
As a consequence,  we get explicit example of Borel subalgebras of Lie(Aut($\mathbb{A}^2$)) not corresponding to any Borel subgroups of Aut($\mathbb{A}^2$) (\thref{MT3}).
Moreover, existence of non-regular Borel subalgebras of Lie(Aut($\mathbb{A}^2$)) follows (\thref{MT2}). 

We end our discussion by studying the isomorphism classes of regular Borel subalgebras.
In fact, in \thref{IC2}(a), we have shown that every regular Borel subalgebra of derived length~$3$ is either isomorphic to $\mathscr{G}(-1,1,({2-i})_i,({-i})_i,(i-1)_i)$ (regular Borel subalgebras corresponding to Borel subgroups) or $\mathscr{G}(n,1,(\frac{i-2}{n})_i,(\frac{i}{n})_i,(i)_i)$, for some $n\in \ZY$ (regular Borel subalgebras not corresponding to Borel subgroups).
Moreover, we also proved that
these $\mathscr{G}'$s
%$(n,1,(\frac{i-2}{n})_i,(\frac{i}{n})_i,(n+i)_i)$ 
are graded non-isomorphic  Lie algebras for different $n$ (cf. \thref{IC2}(b)).

% and also study their isomorphism classes (\thref{IC1}).

Below we give a layout of this paper.
%Let us now discuss the content of this paper.
The next section contains all the notations.
% used throughout the paper.
Then in Section~\ref{P}, we show an important result: the set of all regular Borel subalgebras of Lie(Aut($\mathbb{A}^2$)) coincides with the set of all maximal regular solvable subalgebras (\thref{RBS4}).
This paves our way to prove the structure theorem mentioned above.
So in the next part of the paper, we study {\it regular Abelian subalgebras} (in Section~\ref{RAS}) and {\it regular solvable subalgebras} (in Section~\ref{RSS}).
%Then we introduce regular Abelian subalgebra (in Section~\ref{RAS}) and regular solvable subalgebra (in Section~\ref{RSS}) of Lie(Aut($\mathbb{A}^2$)) and study them.
We observe that maximal regular Abelian subalgebras of Lie(Aut($\mathbb{A}^2$)) are also maximal Abelian subalgebras 
%with respect to the inclusion of subalgebras 
(Propositions~\ref{RAS2} and \ref{RAS3}). 
We also provide a criterion of solvability for regular subalgebras (\thref{GC4}).
In Section~\ref{RBS}, we prove the theorem mentioned above (\thref{MT1}) and write the regular Borel subalgebras in the notation involving $\mathscr{G}$ (\thref{IC0}).
Finally, in Subsection~\ref{IC}, we study the isomorphism classes of the regular Borel subalgebras.
%
%% about the structure of regular Borel subalgebras of Lie(Aut($\mathbb{A}^2$)).
%In order to state the main theorem clearly, we introduce a notation.
%
%For $n\in \ZZ\cup\{-1\}$, $\mu\in \{0,1\}$ and  $\pmb {\lambda}=(\lambda_i)_i, 
%\pmb{\alpha}=(\alpha_i)_i, \pmb{\gamma}=(\gamma_i)_i\in$ Map($\ZY,\kk$),  $\mathscr{G}(n,\mu,\pmb{\lambda},\pmb{\alpha},\pmb{\gamma})$ defines
%%\alpha=(\alpha_i), \gamma=(\gamma_i)\in$ Map($\ZY,\kk$),  $\mathscr{G}(n,\lambda,\alpha,\gamma)$ : on the vector space 
%the Lie algebra on the vector space
%$$V=\kk Y_1\oplus \kk Y_2\oplus\big(\overunderset{\infty}{i=0}{\oplus}\kk X_i\big)$$ as follows:
%\begin{center}
%	$[Y_1,X_0]=X_0,[Y_1,X_i]=\lambda_i X_i,[Y_2,X_0]=\mu X_0$,$[Y_2,X_i]=\alpha_i X_i$,$[X_0,X_i]=\gamma_i X_{i+n},\,\forall i\geqslant 1$ 
%\end{center}
%%where $\lambda=(\lambda_i)_i,\, \alpha=(\alpha_i)_i$ and $\gamma=(\gamma_i)_i$
%and all other commutators are zero.
%In particular, 
%\begin{center}
%
%$\mathfrak{j}_2^{+}$ (or $\mathfrak{j}_2^{-}$) $\cong \mathscr{G}(-1,1,(2-i)_i,(-i)_i,(i-1)_i)$.
%%and 
%%$\mathfrak{j}_2^{-}\cong \mathscr{G}(-1,1,(2-i)_i,(-i)_i,(i-1)_i)$.
%	
%\end{center}

%
%The main result of this paper
%Moreover, by \thref{C1}, it follows that every regular Borel subalgebra $\mathfrak{B}$ of Lie(Aut($\mathbb{A}^2$)) is of the following form:
%\begin{center}
%	$\mathfrak{B}=$Lie($\mathfrak{t}_2,\mathcal{S}$)
%\end{center}

\section{Notations:}\label{N}
This section consists of all the notations used throughout the paper and some small connected remarks.
\begin{enumerate}[\rm 1.]
\item $\mathbb{K}$ : field of characteristic zero

\item $\mathbb{A}^n$ : the affine space over $\mathbb{K}$ of dimension $n\in \ZY$

\item  For any square matrix $A$, $\begin{vmatrix}
	A
\end{vmatrix}$ : determinant of $A$

\item  $\mathcal{M}$ := $\mathbb{Z}_{\geqslant 0}^2\cup\{(m,-1)\mid m\in \mathbb{Z}_{\geqslant 0}\}\cup\{(-1,n)\mid n\in \mathbb{Z}_{\geqslant 0}\}\subseteq \mathbb{Z}^2$
	
\item For any $(a,b)\in \mathbb{Z}^2\setminus \{(0,0)\}$, $\ell_{(a,b)}$ : the line passing through $(0,0)$ and $(a,b)$, i.e.,
\begin{center}
	$\ell_{(a,b)}=\{(c,d)\mid ad-bc=0\}=\{(c,d)\text{ such that } \begin{vmatrix}
		a & c\\
		b & d
	\end{vmatrix}=0\}.$
\end{center}
\end{enumerate}
\begin{rem}\thlabel{N1}
	{\rm For any $(a,b)\in \ZZ^2\setminus\{(0,0)\}$, we know that $\ell_{(a,b)}\cap \mathcal{M}$ is a
	ray in $\ZZ^2$.
		Note that $({a}/{\gcd(a,b)},{b}/{\gcd(a,b)})$ is the minimum element of $\ell_{(a,b)}\cap\mathcal{M}\setminus\{(0,0)\}$ with respect to the lexicographic ordering,
%		 prioritizing the first component.
Therefore, $$\ell_{(a,b)}\cap \mathcal{M}=\bigg\{\bigg(\dfrac{ia}{\gcd(a,b)},\dfrac{ib}{\gcd(a,b)}\bigg)\mid i\in \ZZ\bigg\}.$$
}
\end{rem}

\begin{enumerate}\addtocounter{enumi}{5}
\item For $(a,b)\in \mathbb{Z}^2\setminus\{(-1,-1)\}$, $\ell^{(a,b)}$ : the line passing through $(-1,-1)$ and $(a,b)$, i.e., 
	$$\begin{array}{lll}
	\ell^{(a,b)}&=&\{(c,d)\mid (a+1)(d+1)-(b+1)(c+1)=0\}\\
			&=&	\{(c,d)\text{ such that }
			 \begin{vmatrix}
			a+1 & c+1\\
			b+1 & d+1
		\end{vmatrix}=0\}.
			\end{array}$$

\item For $(a,b)\in \mathbb{Z}^2\setminus\{(0,0)\}$, $\underline{\ell}^{(a,b)}$ : the line parallel to $\ell_{(a,b)}$  passing through $(-1,-1)$, i.e.,
	\begin{center}
		$\underline{\ell}^{(a,b)} =\{(c,d)\mid a(d+1)-b(c+1)=0\}
		=	\{(c,d)\text{ such that } \begin{vmatrix}
			a & c+1\\
			b & d+1
		\end{vmatrix}=0\}.$
	\end{center}
\end{enumerate}
For example, for any $a\in \ZY\cup\{-1\}$, we have $\underline{\ell}^{(a,0)}=\ell^{(0,-1)}$ and $\underline{\ell}^{(0,a)}=\ell^{(-1,0)}$.
\begin{rem}\thlabel{N2}
	{\rm 
	For any $(a,b)\in \ZZ^2\setminus\{(0,0)\}$, we know that $\underline{\ell}^{(a,b)}\cap \mathcal{M}$ is a ray.
	Hence, it has a minimum element with respect to the lexicographic ordering.
%	 prioritizing the first component.
	Let $(p,q)$ denote the minimum element of $\underline{\ell}^{(a,b)}\cap \mathcal{M}$.
	Then for any $(c,d)\in~\underline{\ell}^{(a,b)}\cap \mathcal{M}$, $(c-p,d-q)\in \ell_{(a,b)}$.
	Hence, by \thref{N1}, we can conclude that
	$$\underline{\ell}^{(a,b)}\cap \mathcal{M}=\bigg\{\bigg(p+\dfrac{ia}{\gcd(a,b)},q+\dfrac{ib}{\gcd(a,b)}\bigg)\mid i\in \ZZ\bigg\}.$$
	}
	\end{rem}
	
\begin{enumerate}\addtocounter{enumi}{7}
\item  For $(a,b)\in \mathbb{Z}^2\setminus\{(-1,-1)\}$, $\widetilde{\ell}_{(a,b)}$ : the line parallel to $\ell^{(a,b)}$ passing through $(0,0)$, i.e., 
	\begin{center}
	$\widetilde{\ell}_{(a,b)}=\{(c,d)\mid (a+1)d-(b+1)c=0\}
	=\{(c,d)\text{ such that } \begin{vmatrix}
			a+1 & c\\
			b+1 & d
	\end{vmatrix}=0\}.$\\
	\end{center}
\end{enumerate}
It follows from the definitions that 
$$
\ell_{(1,1)}=\ell_{(a,a)}=\ell^{(a,a)}=\underline{\ell}^{(a,a)}=\widetilde{\ell}_{(a,a)}, \text{ for any }a\in \mathbb{Z}\setminus \{0,-1\}.
$$
\noindent
%Next we draw $\ell_{(a,b)}, \underline{\ell}^{(a,b)},\ell^{(a,b)}, \widetilde{\ell}_{(a,b)}$, for an arbitrary element of $(a,b)\in \ZZ^2$. 
Next we draw the above lines (5-8), for some arbitrary $(a,b)\in\ZZ^2$.

\begin{figure}[h]
	\begin{center}
		\tikzset{every picture/.style={line width=0.75pt}}        
		
		\begin{tikzpicture}[x=1pt,y=1pt,yscale=-1.25,xscale=1.25]

		\tikzstyle{conefill} = [fill=blue!20, draw = black!40]
		\tikzstyle{conefill_gamma} = [fill=red!20, draw = black!70]
			
		\coordinate (e1) at (20,0);
		\coordinate (e2) at (0,-20);
			
		%%%%%%%%%%%%%%%%% P1 %%%%%%%%%%%%
						
			\coordinate (O) at (20,230);
			\coordinate (Oxmax) at ($(O)+5*(e1)$);
			\coordinate (Oymax) at ($(O)+5*(e2)$);
			\coordinate (Ozmax) at ($(O)+2*(e1)+5*(e2)$);
			\coordinate (Owmax) at ($(O)+2.5*(e1)+5*(e2)$);
			\coordinate (Oxmin) at ($(O)-2*(e1)$);
			\coordinate (Oymin) at ($(O)-2*(e2)$);
			\coordinate (Ozmin) at ($(O)-(e2)-(e1)$);
			\coordinate (Owmin) at ($(O)$);
			\coordinate (T1) at ($(O)-3*(e2)+2*(e1)$);
			\coordinate (T2) at ($(O)-4*(e2)+2*(e1)$);
			
			\draw [->,color=black!90] (Oxmin) -- (Oxmax) node[below] {$x$-axis};
			\draw [->,color=black!90] (Oymin) -- (Oymax) node[left] {$y$-axis};
			\draw [->,color=blue] (Ozmin) -- (Ozmax) node[left] {$\underline{\ell}^{(a,b)}$};
			\draw [->,color=red] (Owmin) -- (Owmax) node[right] {$\ell_{(a,b)}$};
			
			\foreach \x in {-1,0,...,4}
			\foreach \y in {-1,0,...,4}
			{
				\node[draw=teal!70,circle,inner sep=1.5pt,fill=teal!70] at ($(O)+\x*(e1)+\y*(e2)$) {};
			}
			\foreach \x in {0,...,4}
			{
				\node[draw=teal!70,circle,inner sep=1.5pt,fill=teal!70] at ($(O)+\x*(e1)-(e2)$) {};
			}
			\node[below left] at ($(O)-(e1)-(e2)$) {\tiny $(-1,-1)$};
			\node[below right] at ($(O)$) {\tiny $(0,0)$};
			\node[below right] at ($(O)+2*(e1)+4*(e2)$) {\tiny $(a,b)$};
			\node[draw=red,circle,inner sep=2pt,fill=teal!70] at ($(O)$) {};
			\node[draw=red,circle,inner sep=2pt,fill=teal!70] at ($(O)+(e1)+2*(e2)$) {};
			\node[draw=red,circle,inner sep=2pt,fill=teal!70] at ($(O)+2*(e1)+4*(e2)$) {};
			\node[draw=blue,circle,inner sep=2pt,fill=teal!70] at ($(O)-(e1)-(e2)$) {};
			\node[draw=blue,circle,inner sep=2pt,fill=teal!70] at ($(O)+(e2)$) {};
			\node[draw=blue,circle,inner sep=2pt,fill=teal!70] at ($(O)+3*(e2)+(e1)$) {};
			
			\draw[black] (T1) node {Pictorial description of $\ell_{(a,b)}$ and $\underline{\ell}^{(a,b)}$};
			
			%%%%%%%%%%%%%%%%% P_2 %%%%%%%%%%%%
			
			\coordinate (O) at (200,230);
			\coordinate (Oxmax) at ($(O)+5*(e1)$);
			\coordinate (Oymax) at ($(O)+5*(e2)$);
			\coordinate (Ozmax) at ($(O)+5*(e1)+3*(e2)$);
			\coordinate (Owmax) at ($(O)+5*(e1)+3.34*(e2)$);
			\coordinate (Oxmin) at ($(O)-2*(e1)$);
			\coordinate (Oymin) at ($(O)-2*(e2)$);
			\coordinate (Ozmin) at ($(O)-(e2)-(e1)$);
			\coordinate (Owmin) at ($(O)$);
			
			\coordinate (T1) at ($(O)-3*(e2)+2*(e1)$);
			\coordinate (T2) at ($(O)-4*(e2)+2*(e1)$);
			
			\draw [->,color=black!90] (Oxmin) -- (Oxmax) node[below] {$x$-axis};
			\draw [->,color=black!90] (Oymin) -- (Oymax) node[left] {$y$-axis};
			\draw [->,color=blue] (Ozmin) -- (Ozmax) node[right] {${\ell}^{(a,b)}$};
			\draw [->,color=red] (Owmin) -- (Owmax) node[above] {$\widetilde{\ell}_{(a,b)}$};
			\foreach \x in {-1,0,...,4}
			\foreach \y in {-1,0,...,4}
			{
				\node[draw=teal!70,circle,inner sep=1.5pt,fill=teal!70] at ($(O)+\x*(e1)+\y*(e2)$) {};
			}
			\foreach \x in {0,...,4}
			{
				\node[draw=teal!70,circle,inner sep=1.5pt,fill=teal!70] at ($(O)+\x*(e1)-(e2)$) {};
			}
			\node[below left] at ($(O)-(e1)-(e2)$) {\tiny $(-1,-1)$};
			\node[below left] at ($(O)$) {\tiny $(0,0)$};
			\node[below right] at ($(O)+2*(e1)+1*(e2)$) {\tiny $(a,b)$};
			\node[draw=red,circle,inner sep=2pt,fill=teal!70] at ($(O)$) {};
			\node[draw=red,circle,inner sep=2pt,fill=teal!70] at ($(O)+3*(e1)+2*(e2)$) {};
			\node[draw=blue,circle,inner sep=2pt,fill=teal!70] at ($(O)-(e1)-(e2)$) {};
			\node[draw=blue,circle,inner sep=2pt,fill=teal!70] at ($(O)+2*(e1)+(e2)$) {};
			\draw[black] (T1) node {Pictorial description of $\ell^{(a,b)}$ and $\widetilde{\ell}_{(a,b)}$};
		\end{tikzpicture}
	\end{center}
\end{figure}
\begin{rem}\thlabel{N3}
	{\rm
		Note that for any $(a,b), (c,d)\in \mathbb{Z}^2\setminus \{(0,0),(-1,-1)\}$,
		$$(a,b)\in \underline{\ell}^{(c,d)} \iff \ell^{(a,b)}=\underline{\ell}^{(c,d)} \iff
		\ell_{(c,d)}=\widetilde{\ell}_{(a,b)} 
		\iff (c,d)\in \widetilde{\ell}_{(a,b)}.$$
	}
\end{rem}
%\noindent
\begin{enumerate}\addtocounter{enumi}{8}
\item HDer($\kk[x,y]$) : the set of all homogeneous derivations in Der($\kk[x,y]$) with respect to the standard $\mathbb{Z}^2$-grading 

\item 
	Next we fix some notations for homogeneous derivations:
%	 of Lie(Aut($\mathbb{A}^2$)) :
\begin{equation*}
	\Delta := 
	x\dfrac{\partial}{\partial x}+y\dfrac{\partial}{\partial y}
\end{equation*}
%\item For any $(a,b)\in \Z^2$,
\begin{equation*}
	\partial_{(a,b)}:= x^ay^b\bigg((b+1)x\dfrac{\partial}{\partial x}-(a+1)y\dfrac{\partial}{\partial y}\bigg)\\
	,\text{ for } (a,b)\in \Z^2\\
\end{equation*}
%\begin{equation*}
%	\partial_{(m,-1)}:= -(m+1)x^m\dfrac{\partial}{\partial y},\text{ for } m\in \mathbb{Z}_{\geqslant 0}
%\end{equation*}
%\begin{equation*}
%	\partial_{(-1,n)}:=(n+1)y^n\dfrac{\partial}{\partial x},\text{ for } n\in \mathbb{Z}_{\geqslant 0}.
%\end{equation*}
%For convenience we define 
%Note that
%\begin{equation*}
%	\partial_{(-1,-1)}=0.
%\end{equation*}

\item	
For any 
$\mathcal{S}\subseteq \mathcal{M}$, Lie($\mathcal{S}$) : the subalgebra of Der($\kk[x,y]$) generated by the following set $\{\partial_{(a,b)}\mid (a,b)\in \mathcal{S}\}$

\item
For any $\mathcal{S}\subseteq \mathcal{M}$ and $\delta\in$ Der$(\kk[x,y])$, Lie($\mathcal{S}, \delta)$ : the subalgebra of Der($\kk[x,y]$) generated by the following set $\{\partial_{(a,b)}\mid (a,b)\in \mathcal{S}\}\cup \{\delta\}$

\item
 For any line $\ell$, Lie($\ell$) : the subalgebra of Der($\kk[x,y]$) generated by the following set $\{\partial_{(a,b)}\mid (a,b)\in \mathcal{M}\cap \ell\}$
\end{enumerate}
\begin{rem}\thlabel{N4}
	{\rm {Note that $\mathcal{M}\cap \ell$ can be finite, even empty also.
	For example, when \\ $m\in \ZZ$ then	
$\mathcal{M}\cap {\ell}_{(m,-1)}= \{(0,0),(m,-1)\}$ and $\mathcal{M}\cap \underline{\ell}^{(m,-1)}= \emptyset$.
Hence,  \begin{center}
	Lie(${\ell}_{(m,-1)}$) = $\kk\partial_{(0,0)}\oplus\kk\partial_{(m,-1)}$ and Lie($\underline{\ell}^{(m,-1)}$) = $0$.
\end{center}
}}
\end{rem}

\begin{enumerate}\addtocounter{enumi}{13}
\item For any line $\ell$ and $(c,d)\in \mathcal{M}$, Lie($\ell, (c,d)$) : the subalgebra of Der($\kk[x,y]$) generated by the following set $\{\partial_{(a,b)}\mid (a,b)\in \mathcal{M}\cap \ell\}\cup \{\partial_{(c,d)}\}$

\item For any line $\ell$ and $\delta\in$ Der$(\kk[x,y])$, Lie($\ell,\delta$) : the subalgebra of Der($\kk[x,y]$) generated by the following set $\{\partial_{(a,b)}\mid (a,b)\in \mathcal{M} \cap \ell\}\cup\{\delta\}$

\item For any line $\ell$ and a subalgebra $\mathfrak{h}$ of Der$(\kk[x,y])$, Lie($\ell,\mathfrak{h}$) : the subalgebra of Der($\kk[x,y]$) generated by the following set $\{\partial_{(a,b)}\mid (a,b)\in \mathcal{M} \cap \ell\}\cup\mathfrak{h}$

\item $\mathbb{T}$ : the standard maximal torus of Aut($\mathbb{A}^2$)

\item $\mathfrak{t}_2$ : the Abelian toral Lie subalgebra corresponding to $\mathbb{T}$, i.e., Lie($\mathbb{T}$)= Lie($\{(0,0)\},\Delta$)
%i.e.,
% = Lie($\mathbb{T}$) = Lie($(0,0),\Delta$) = $\kk x\dfrac{\partial}{\partial x}\oplus \kk y\dfrac{\partial}{\partial y} $, with the Lie bracket defined by \eqref{CR3}

 \item JONQ$^{\pm}(\mathbb{A}^2)$ : the triangular de Jonqu\'eres subgroups of Aut($\mathbb{A}^2$)
 
 \item $\mathfrak{j}^{\pm}=$ Lie(JONQ$^{\pm}(\mathbb{A}^2)$) : the triangular Lie subalgebras of Lie(Aut($\mathbb{A}^2$))
 
\item For $n\in \ZZ\cup\{-1\}$, $\mu\in \{0,1\}$ and $\pmb{\lambda}, \pmb{\alpha}, \pmb{\gamma}\in$ Map($\ZY,\kk$),  $\mathscr{G}(n,\mu,\pmb{\lambda},\pmb{\alpha},\pmb{\gamma})$ :
the Lie algebra on the vector space
$$\kk Y_1\oplus \kk Y_2\oplus\bigg(\overunderset{\infty}{i=0}{\oplus}\kk X_i\bigg)$$ {equipped with the following commutation relations:
\[	\begin{array}{llllll}
&[Y_1,X_0] = X_0,\, &[Y_2,X_0]=\mu X_0, &\\
&[Y_1,X_i] =\lambda_i X_i,\, &[Y_2,X_i]=\alpha_i X_i,\,
	&[X_0,X_i]=\gamma_i X_{i+n},\, \, \forall i\in \ZY, 	
	\end{array}
	\]
	where $\pmb{\lambda}=(\lambda_i)_i,\, \pmb{\alpha}=(\alpha_i)_i$, $\pmb{\gamma}=(\gamma_i)_i$ and
	all other commutators are zero.}
	In particular,  
%	\begin{center}
		$\mathfrak{j}_2^{\pm}= \mathscr{G}(-1,1,(2-i)_i,(-i)_i,(i-1)_i)$ (cf. \thref{IC0}(b)).
	
%    \end{center}
\end{enumerate}
{Henceforth, we will only define the non-zero commutators of a Lie algebra. }

\section{Preliminaries}\label{P}	

At first, we recall the definition of the derived length of a solvable Lie algebra.

\begin{Defn}\thlabel{D1}
{\rm 
	The {\it derived length} of a solvable Lie algebra $\mathfrak{L}$
	is the smallest\\ non-negative integer $r$, such that the $r$-th derived algebra $\mathfrak{L}^{(r)}$
	is zero, where
	\begin{center}
		
		$\mathfrak{L}^{(0)}:=\mathfrak{L}$ and $\mathfrak{L}^{(s+1)}:=[\mathfrak{L}^{(s)},\mathfrak{L}^{(s)}]$, for all $s\in \ZZ$.
	
	\end{center}
}	
\end{Defn}

The next result provides a uniform bound for the derived lengths of solvable Lie subalgebras of derivations \cite[Corollary~2(1)]{MP}.

\begin{prop}\thlabel{ThDL}
	Let $\mathfrak{L}$ be a solvable subalgebra of
	Der$(\kk[x_1,\dots,x_n])$, for some \\ $n\in \ZY$. Then the derived length of $\mathfrak{L}$ is at most 2n.	
\end{prop}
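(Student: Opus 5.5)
This is the Martínez–Petrogradsky bound (cited as \cite[Corollary~2(1)]{MP}). I would prove it by induction on $n$, using the transcendence degree of a field of constants to control how far the derived series can go. Set $W_n:=\mathrm{Der}(\kk[x_1,\dots,x_n])$ and $F:=\kk(x_1,\dots,x_n)$. Every $\partial\in W_n$ extends uniquely to a derivation of $F$. So $\mathfrak{L}$ embeds into $W(F)=\mathrm{Der}_\kk(F)$, which is an $n$-dimensional $F$-vector space, and the derived length does not change. I would therefore prove the stronger statement: for a field $F$ of transcendence degree $n$ over $\kk$ (in characteristic zero), every solvable Lie subalgebra $\mathfrak{L}\subseteq\mathrm{Der}_\kk(F)$ has derived length at most $2n$. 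The case $n=0$ is trivial, since then $\mathrm{Der}_\kk(F)=0$.

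For the inductive step, let $\mathfrak{L}$ be solvable of derived length $r$, and let $\mathfrak{A}=\mathfrak{L}^{(r-1)}\neq 0$ be its last nonzero derived term, which is abelian and an ideal of $\mathfrak{L}$. Consider the $F$-span $V=F\mathfrak{A}\subseteq \mathrm{Der}(F)$ and the constant field $E=\{f\in F:\ a(f)=0\ \forall a\in\mathfrak{A}\}$. Since $\mathfrak{A}\neq0$, $E$ is a proper subfield, and it is algebraically closed in $F$. By the Jacobian/Wronskian-type argument for commuting derivations, $\mathrm{trdeg}_\kk E=n-\dim_F V$; in particular $\mathrm{trdeg}_\kk E\le n-1$. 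Because $\mathfrak{A}$ is an ideal, $\mathfrak{L}$ preserves $E$, so restriction gives a homomorphism $\rho:\mathfrak{L}\to\mathrm{Der}_\kk(E)$. By induction, $\rho(\mathfrak{L})$ has derived length at most $2(n-1)$, hence $\mathfrak{L}^{(2n-2)}\subseteq\ker\rho$.

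The key step, and the one I expect to be the main obstacle, is to show that $\ker\rho$ has derived length at most $2$. Elements of $\ker\rho$ are derivations of $F$ that vanish on $E$, i.e.\ they lie in $\mathrm{Der}_E(F)$. One can choose $u_1,\dots,u_k$ in $F$ with $k=\dim_F V$, transcendental over $E$, together with commuting $E$-derivations $D_1,\dots,D_k$ satisfying $D_i(u_j)=\delta_{ij}$ that span $V$ over $F$. Each $a\in\mathfrak{A}$ has the form $a=\sum_i f_iD_i$, and $[a,a']=0$ forces the coefficients to satisfy a flatness condition. I would then look at the ideal $\mathfrak{K}=\ker\rho$, which normalises $\mathfrak{A}$. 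The derivations in $\mathfrak{K}$ that commute with all of $\mathfrak{A}$ have coefficients in the $\mathfrak{A}$-constants relative to the frame $D_i$, and so form an abelian algebra. The remaining part of $\mathfrak{K}$ acts on $\mathfrak{A}$, via the quotient $\mathfrak{K}/C_\mathfrak{K}(\mathfrak{A})$, by $F$-linear maps of an abelian $F$-space structure. The plan is to show that $[\mathfrak{K},\mathfrak{K}]\subseteq C_{\mathfrak{K}}(\mathfrak{A})\cap F\mathfrak{A}$, which is abelian. That gives $\mathfrak{K}^{(2)}=0$.

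If this bound is too crude, I would instead refine the induction to carry the bound $2\,\mathrm{trdeg}$ directly. At each stage the abelian ideal would lower the transcendence degree by $\dim_F V\ge1$ at the cost of two derived steps, so that $r\le 2(n-1)+2=2n$. Combining $\mathfrak{L}^{(2n-2)}\subseteq\mathfrak{K}$ with $\mathfrak{K}^{(2)}=0$ gives $\mathfrak{L}^{(2n)}=0$.
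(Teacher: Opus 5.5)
The paper gives no proof of this statement; it only quotes \cite[Corollary~2(1)]{MP}. So your argument has to stand on its own. Your reduction to $F=\kk(x_1,\dots,x_n)$ and the outer induction through the constant field $E$ of the last derived term are fine. However, the step you identify as the key one is false: $\ker\rho$ need not be metabelian, and the inclusion $[\mathfrak K,\mathfrak K]\subseteq C_{\mathfrak K}(\mathfrak A)\cap F\mathfrak A$ fails. Write $\partial_x=\partial/\partial x$, etc., and take
\[
\mathfrak L=\kk\,x\partial_x\oplus\kk\,y\partial_y\oplus\kk\,y\partial_x\oplus\kk\,\partial_z\oplus\kk\,z\partial_z\oplus\kk[z]\partial_x\oplus\kk[z]\partial_y\subseteq\mathrm{Der}(\kk[x,y,z]).
\]
This is a subalgebra, with derived series
\begin{itemize}
\item $\mathfrak L^{(1)}=\kk\,y\partial_x\oplus\kk\,\partial_z\oplus\kk[z]\partial_x\oplus\kk[z]\partial_y$,
\item $\mathfrak L^{(2)}=\kk[z]\partial_x\oplus\kk[z]\partial_y=\mathfrak A$,
\item $\mathfrak L^{(3)}=0$.
\end{itemize}
Here $E=\kk(z)$ and $\mathfrak K=\ker\rho=\kk\,x\partial_x\oplus\kk\,y\partial_y\oplus\kk\,y\partial_x\oplus\kk[z]\partial_x\oplus\kk[z]\partial_y$. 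But $\mathfrak K^{(2)}\supseteq[y\partial_x,\kk[z]\partial_y]=\kk[z]\partial_x\neq0$, so $\mathfrak K$ has derived length $3$. Also $y\partial_x\in[\mathfrak K,\mathfrak K]$ does not commute with $\partial_y\in\mathfrak A$. The proposition itself is not contradicted, since $\mathfrak L$ has derived length $3$, but your lemma is.

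The mechanism is general. Elements of $\mathfrak K$ kill $E$, so they act $E$-linearly (not $F$-linearly) on $E\mathfrak A$, an $E$-space of dimension $k=\dim_F V$, with kernel $C_{\mathfrak K}(\mathfrak A)$. The image is some solvable subalgebra of $\mathfrak{gl}_k(E)$; here it is the full upper triangular $\mathfrak b_2$, coming from $x\partial_x$, $y\partial_y$, $y\partial_x$. Nothing forces this image to be abelian once $k\geq 2$. Your fallback of ``two derived steps per unit of transcendence degree'' rests on the same claim, so it does not repair this.

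Two further steps are also unjustified.
\begin{itemize}
\item Your frame argument for the centralizer only works when $V=\mathrm{Der}_E(F)$, i.e.\ $k=\mathrm{trdeg}(F/E)$. In general the centralizer of an abelian $\mathfrak A$ in $\mathrm{Der}_E(F)$ need not even be abelian. Take $D=x\partial_x+(y+z)\partial_y+z^2\partial_z$. A Darboux-polynomial argument shows $\kk(x,y,z)^D=\kk$: $D$ preserves the $x$-grading, and an invariant curve of $(y+z)\partial_y+z^2\partial_z$ other than $z=0$ would yield a rational solution of $z^2k'=k+z$, which has none. Yet $x\partial_x$ and $x\partial_y$ commute with $D$, and $[x\partial_x,x\partial_y]=x\partial_y$.
\item The same $D$ refutes your equality $\mathrm{trdeg}_\kk E=n-\dim_F V$, since here $0\neq 2$. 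Only $\leq$ holds, because $V\subseteq\mathrm{Der}_E(F)$; fortunately that is all you use.
\end{itemize}

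A repair of your scheme would need two things. First, it must charge $\ker\rho$ against the whole drop $\mathrm{trdeg}_\kk E\leq n-k$ rather than $n-1$. For instance, an abelian $C_{\mathfrak K}(\mathfrak A)$ together with the bound $1+\lceil\log_2 k\rceil$ for solvable subalgebras of $\mathfrak{gl}_k$ would give $2(n-k)+2+\lceil\log_2 k\rceil\leq 2n$. Second, it must control $C_{\mathfrak K}(\mathfrak A)$ by an argument that genuinely uses that $\mathfrak A$ is the last derived term of $\mathfrak L$.
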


Recall that
%\begin{center}
 Der($\kk[x,y]$)~=~Vec($\mathbb{A}^2$) consists of all polynomial vector fields over $\mathbb{A}^2$
% \end{center}
 and 
 \begin{center}
 Lie(Aut($\mathbb{A}^2$)) = Vec$^c(\mathbb{A}^2)$ consists of polynomial vector fields with constant divergence,
 \end{center}
 where divergence of a vector field $\delta=f(x,y)\partial/\partial x + g(x,y)\partial/\partial y$ is defined as follows:
 \begin{center}
 	div($\delta$) = ${\partial f}/{\partial x}+{\partial g}/{\partial y}$.
 \end{center}
 The Lie subalgebra Vec$^0(\mathbb{A}^2)\subset$ Vec$^c(\mathbb{A}^2)$ consisting of vector fields over $\mathbb{A}^2$ with zero divergence coincides with Lie($\mathfrak{U}_2$), where $\mathfrak{U}_2$ is the subgroup of Aut($\mathbb{A}^2$) consisting of all automorphisms of $\mathbb{A}^2$ with Jacobian one  \cite[Proposition 15.7.2]{FK}.
 Moreover, in \\ \cite[Section~6.2]{KZ1} there is a decomposition as follows:
 \begin{equation}\label{ED}
 	\text{Vec}^c(\mathbb{A}^2)=
% 	\text{Vec}^0 (\mathbb{A}^2) \oplus \kk(x\partial/\partial x + y\partial/\partial y)=
\text{Vec}^0 (\mathbb{A}^2) \oplus \kk\Delta, \text{ where }\Delta =x\partial/\partial x + y\partial/\partial y.
 \end{equation}
 
% Now we discuss the bigraded structure of Lie(Aut($\mathbb{A}^2$)).
 The standard $\mathbb{Z}^2$-grading on the polynomial ring $\kk[x, y]$ induces a $\mathbb{Z}^2$-grading on the Lie algebras Der($\kk[x,y]$), Lie(Aut($\mathbb{A}^2$)) and
Lie($\mathfrak{U}_2$). 
Hence, Der($\kk[x,y]$), Lie(Aut($\mathbb{A}^2$)) and Lie($\mathfrak{U}_2$) are bigraded Lie algebras and
for any $i, j \in \ZZ$, we have
\begin{equation*}
	\text{bideg}(x^iy^j\partial/\partial x) = (i-1, j) \text{ and }
	\text{bideg}(x^iy^j\partial/\partial y) = (i, j-1).
\end{equation*}
Thus, the bidegree of homogeneous derivations are of the form $(m,n)\in \mathbb{Z}^2$, such that $(m,n)\in \mathbb{Z}_{\geqslant 0}^2\cup\{(m,-1)\mid m\in \mathbb{Z}_{\geqslant 0}\}\cup\{(-1,n)\mid n\in \mathbb{Z}_{\geqslant 0}\}=\mathcal{M}.$
If $\delta\in$ HDer($\kk[x,y]$) is 
of bidegree $(a,b)$ and $x^iy^j\notin$ ker($\delta$) then bideg($\delta(x^iy^j)$) = $(a+i,b+j)$.
For non-commuting $\delta,\partial\in$HDer($\kk[x,y]$), we have bidegree [$\delta,\partial$] = bidegree($\delta$)$+$bidegree($\partial$). 

The homogeneous elements of Vec$^0(\mathbb{A}^2)$ with bidegree $(a,b)\in \mathcal{M}$ are of the form $\partial_{(a,b)}$ (cf. Section~\ref{N}, notation~10) up to multiplication by a scalar.
Next by \eqref{ED}, we can conclude that the homogeneous elements of Lie(Aut($\mathbb{A}^2$)) with bidegree $(a,b)\in \mathcal{M}\setminus\{(0,0)\}$ are the same.
Only for bidegree $(0,0)$, the homogeneous elements are of the form 
\begin{center}

$\lambda\partial_{(0,0)}+\mu\Delta=(\lambda+\mu)x\dfrac{\partial}{\partial x} + (\mu-\lambda)y\dfrac{\partial}{\partial y}$, for  $\lambda,\mu\in \kk$ both not zero simultaneously.

\end{center}
These homogeneous derivations are endowed with the following commutation relations:

\begin{equation}\label{CR1}
	[\partial_{(a,b)},\partial_{(c,d)}]	=
	\begin{vmatrix}
		c+1 & a+1\\
		d+1 & b+1
	\end{vmatrix}
	\partial_{(a+c,b+d)}, \text{ for }(a,b),(c,d) \in \mathcal{M}
\end{equation}

%\begin{equation}\label{CR2}
%	[\partial_{(a,b)},\partial_{(c,d)}]	= 0, \text{ for }(a,b),(c,d)\in \mathcal{M} \text{ and } (a+c,b+d)\notin \mathcal{M} 	
%\end{equation}

\begin{equation}\label{CR3}
	[\Delta,\partial_{(c,d)}]= (c+d)\partial_{(c,d)},\text{ for } (c,d)\in\mathcal{M}. 	
\end{equation}
Note that every homogeneous locally nilpotent derivation (cf. \cite[1.1.7]{GFB}) of $\kk[x,y]$ is of the form $\partial_{(a,b)}$, for some $(a,b)\in \mathcal{M}\setminus \ZZ^2$ and hence is contained in Lie($\mathfrak{U}_2$).
% Vec$^0(\mathbb{A}^2)\subset$ Lie(Aut($\mathbb{A}^2$)). 
%Recall that the commutation relations of these homogeneous derivations are already defined in  Section~\ref{N} equations~\eqref{CR1}, \eqref{CR2} and \eqref{CR3}.

Next, we write down an observation in the form of a lemma.
It is used extensively throughout.

\begin{lem}\thlabel{P3}
	For any $(a,b), (c,d)\in \mathcal{M}$,
	the following statements hold:
	\begin{enumerate}[\rm(a)]
	\item if $(a,b)\in \ell^{(c,d)}$ then $[\partial_{(a,b)}, \partial_{(c,d)}]=0$.
	\item if $(a,b)\notin \ell^{(c,d)}$ then 
	$\partial_{(a+c,b+d)}\in$ Lie$(\{(a,b), (c,d)\})$.
   \end{enumerate}
\end{lem}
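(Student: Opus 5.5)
Both parts follow directly from the commutation relation \eqref{CR1}. The one point needing care is the convention $\partial_{(a+c,b+d)}=0$ when $(a+c,b+d)\notin\mathcal{M}$.

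First I would compare the determinant in \eqref{CR1} with the defining equation of $\ell^{(c,d)}$ from notation 6. We have
\[
\begin{vmatrix} c+1 & a+1\\ d+1 & b+1\end{vmatrix}=(c+1)(b+1)-(a+1)(d+1),
\]
and $(a,b)\in\ell^{(c,d)}$ means exactly $(c+1)(b+1)-(d+1)(a+1)=0$. So the coefficient in \eqref{CR1} vanishes precisely when $(a,b)\in\ell^{(c,d)}$. The degenerate case $(c,d)=(-1,-1)$ cannot occur, since $(c,d)\in\mathcal{M}$ and $(-1,-1)\notin\mathcal{M}$; hence $\ell^{(c,d)}$ is well defined.

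For part (a), if $(a,b)\in\ell^{(c,d)}$ the coefficient is $0$, and \eqref{CR1} gives $[\partial_{(a,b)},\partial_{(c,d)}]=0$. I would also note explicitly that \eqref{CR1} remains valid when $(a+c,b+d)\notin\mathcal{M}$. This only happens when both $(a,b)$ and $(c,d)$ lie in $\mathcal{M}\setminus\ZZ^2$ on the same axis-type, giving second coordinate $-2$ or first coordinate $-2$. In that case $\partial_{(a+c,b+d)}$, read as the formula in notation 10, is still a well-defined expression. One checks directly that the bracket is then $0$: for instance $[x^m\partial/\partial y,\,x^{m'}\partial/\partial y]=0$.

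For part (b), suppose $(a,b)\notin\ell^{(c,d)}$. Then the scalar $D=(c+1)(b+1)-(a+1)(d+1)$ is nonzero. Since $\operatorname{char}\kk=0$, it is invertible, so
\[
\partial_{(a+c,b+d)}=D^{-1}[\partial_{(a,b)},\partial_{(c,d)}]\in \mathrm{Lie}(\{(a,b),(c,d)\}).
\]
The only step needing a moment's thought is the boundary case above, where $(a+c,b+d)\notin\mathcal{M}$. But there the bracket is $0$ and $\partial_{(a+c,b+d)}$ is the zero derivation: for example $\partial_{(a,-2)}=x^ay^{-2}(-x\partial/\partial x-(a+1)y\partial/\partial y)$ equals $0$ after the correct interpretation via \eqref{CR1}. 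More cleanly, one shows that $D=0$ automatically in that case, so (b) never applies there. Indeed, if $b=d=-1$ then $D=0$, and similarly if $a=c=-1$.

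I therefore expect no real obstacle. The proof is a direct reading of \eqref{CR1}, with a short check that the non-$\mathcal{M}$ cases fall under (a).
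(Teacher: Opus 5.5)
Your approach is the paper's: both parts are read off \eqref{CR1}. Part (a) and the main identity in (b) are fine, but your boundary-case analysis is wrong. You claim that $(a+c,b+d)\notin\mathcal{M}$ happens only when both points are of the same type, $(m,-1),(m',-1)$ or $(-1,n),(-1,n')$. You then conclude that $D=0$ in every such case, ``so (b) never applies there.'' This misses the pair $\{(0,-1),(-1,0)\}$. For $(a,b)=(0,-1)$ and $(c,d)=(-1,0)$ one gets $D=(c+1)(b+1)-(a+1)(d+1)=0\cdot 0-1\cdot 1=-1\neq 0$. So $(a,b)\notin\ell^{(c,d)}$ (indeed $\ell^{(-1,0)}$ is the line $x=-1$), and part (b) applies, yet $(a+c,b+d)=(-1,-1)\notin\mathcal{M}$. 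This is exactly the one case the paper's proof treats separately.

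The repair is immediate. By notation~10,
\[
\partial_{(-1,-1)}=x^{-1}y^{-1}\bigl(0\cdot x\,\partial/\partial x-0\cdot y\,\partial/\partial y\bigr)=0 .
\]
This is consistent with $[\partial_{(0,-1)},\partial_{(-1,0)}]=[-\partial/\partial y,\partial/\partial x]=0$. Hence $\partial_{(a+c,b+d)}\in\mathrm{Lie}(\{(a,b),(c,d)\})$ trivially, and your identity $\partial_{(a+c,b+d)}=D^{-1}[\partial_{(a,b)},\partial_{(c,d)}]$ still holds there.

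Also discard the remark that $\partial_{(a,-2)}$ ``equals $0$ after the correct interpretation.'' It is a nonzero Laurent expression. What vanishes when $b=d=-1$ or $a=c=-1$ is the coefficient $D$, which is the correct observation you make right afterwards.
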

\begin{proof}
	{\bf (a) : }If $(a,b)\in \ell^{(c,d)}$ then $
	\begin{vmatrix}
		c+1 & 	a+1\\
		d+1 & b+1
	\end{vmatrix}= 0$.
	Therefore, by commutation relation~\eqref{CR1}, we have $[\partial_{(a,b)}, \partial_{(c,d)}]=0$.
	
	{\bf (b) : }
	If $(a,b)\notin \ell^{(c,d)}$ then $
	\begin{vmatrix}
		c+1 & 	a+1\\
		d+1 & b+1
	\end{vmatrix}\neq 0$.
	Therefore, either $(a+c,b+d)\in \mathcal{M}$ or $\{(a,b),(c,d)\}=\{(0,-1), (-1,0)\}$.
	Now, 
	\begin{center}
	$[\partial_{(-1,0)}, \partial_{(0,-1)}]=\partial_{(-1,-1)}$ and
	$\partial_{(-1,-1)}=0$.
\end{center}
% when $(a+c,b+d)\in \mathcal{M}$, by commutation relation~\eqref{CR1}, we have
%	\begin{equation*}
%		[\partial_{(a,b)},\partial_{(c,d)}]	=
%		\begin{vmatrix}
%			c+1 & a+1\\
%			d+1 & b+1
%		\end{vmatrix}
%		\partial_{(a+c,b+d)}.
%	\end{equation*}
	Hence, by  commutation relation~\eqref{CR1}, we have $\partial_{(a+c,b+d)}\in$ Lie($\{(a,b), (c,d)\}$).
%	 in both cases.
\end{proof}

Now, we recall the definition of some subalgebras of Lie(Aut($\mathbb{A}^2$)).
First, the Abelian toral Lie subalgebra
\begin{center}
$\mathfrak{t}_2$
= Lie($\mathbb{T}$) = Lie($(0,0),\Delta$) = $\kk \partial_{(0,0)}\oplus \kk \Delta $ = $\kk x\dfrac{\partial}{\partial x}\oplus \kk y\dfrac{\partial}{\partial y} $
\end{center}
and next, the triangular Lie subalgebras 
\begin{center}
	$\mathfrak{j}_2^{+}$ = Lie(JONQ$^{+}(\mathbb{A}^2)$) =
	$\kk[y]\dfrac{\partial}{\partial x}\oplus\kk x\dfrac{\partial}{\partial x}\oplus \kk y\dfrac{\partial}{\partial y}\oplus \kk\dfrac{\partial}{\partial y}$
%	 Lie($\mathfrak{t}_2,\underline{\ell}^{(0,-1)}, (0,-1)$) 
=	Lie($\mathfrak{t}_2,{\ell}^{(-1,0)}, (0,-1)$)
\end{center}
and
\begin{center}
	$\mathfrak{j}_2^{-}$ = Lie(JONQ$^{-}(\mathbb{A}^2)$) =
	$\kk[x]\dfrac{\partial}{\partial y}\oplus\kk x\dfrac{\partial}{\partial x}\oplus \kk y\dfrac{\partial}{\partial y}\oplus \kk\dfrac{\partial}{\partial x}$ =
	Lie($\mathfrak{t}_2,{\ell}^{(0,-1)}, (-1,0)$).
\end{center}
We already know that $\mathfrak{j}_2^{+}$ and $\mathfrak{j}_2^{-}$ are Borel subalgebras of Lie(Aut($\mathbb{A}^2$)) of derived length~$3$ \cite[Proposition~4.9]{AZ3} and they are also maximal regular solvable subalgebras, with respect to inclusion.
Our next step will be in this direction.
Precisely,  we will show that the set of all regular Borel subalgebras coincides with the set of all  maximal solvable subalgebras, in this case (\thref{RBS4}).
	
We begin by stating an easy lemma. For a similar proof one can look at \cite[Lemma~4.8]{AZ3}.

\begin{lem}\thlabel{RBS1}
	Let $V$ be a vector space and $L$ be a linear operator on $V.$
	Suppose $U$ is an invariant subspace of $V$ under $L$.
	
	Moreover, let 
	$u=\sum_{i=1}^{n}u_i\in U,$ for some $u_1,\dots,u_n\in V$ 	
	with $Lu_i=\lambda_iu_i$, for all $i$ such that $\lambda_i$'s are different from each other.
	
	Then $u_1,\dots,u_n\in U$.
\end{lem}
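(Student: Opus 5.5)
The plan is a Vandermonde argument: apply powers of $L$ to $u$ and use the $L$-invariance of $U$ to get enough linear combinations of the $u_i$ lying in $U$, then invert. Since $U$ is invariant under $L$, it is invariant under every power $L^k$ and hence under $p(L)$ for every polynomial $p$.

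First compute the iterates. For each $k=0,1,\dots,n-1$,
\[
L^k u=\sum_{i=1}^{n}\lambda_i^k u_i\in U .
\]
This gives $n$ vectors of $U$, obtained from $(u_1,\dots,u_n)$ through the Vandermonde matrix $(\lambda_i^k)_{k,i}$.

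Next, invert this system. Its determinant is $\prod_{i<j}(\lambda_j-\lambda_i)\neq 0$, because the $\lambda_i$ are pairwise distinct. So each $u_j$ is a $\kk$-linear combination of $u, Lu,\dots,L^{n-1}u$, all of which lie in $U$. Since $U$ is a subspace, $u_j\in U$.

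A cleaner equivalent route uses Lagrange interpolation polynomials. Set
\[
p_j(t)=\prod_{i\neq j}\frac{t-\lambda_i}{\lambda_j-\lambda_i}.
\]
Then $p_j(L)u_i=p_j(\lambda_i)u_i=\delta_{ij}u_i$, so $p_j(L)u=u_j$. Because $p_j(L)$ preserves $U$, this again gives $u_j\in U$.

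There is no real obstacle here. The only point needing care is that the $\lambda_i$ are pairwise distinct; this is exactly what makes the Vandermonde determinant nonzero, or equivalently makes the denominators $\lambda_j-\lambda_i$ in $p_j$ nonzero. Zero vectors among the $u_i$ cause no problem, since $0\in U$ trivially.
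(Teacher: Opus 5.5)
Your proof is correct and complete: the Vandermonde system, or equivalently the Lagrange polynomials $p_j(L)$ (which preserve $U$), recovers each $u_j$ from $u, Lu,\dots,L^{n-1}u\in U$, and pairwise distinctness of the $\lambda_i$ is exactly the hypothesis this needs. The paper gives no proof of its own here; it calls the lemma easy and refers to [AZ3, Lemma~4.8] for a similar argument. Your write-up supplies the standard argument, and an equivalent variant is induction on $n$: apply $(L-\lambda_n)u=\sum_{i<n}(\lambda_i-\lambda_n)u_i\in U$ to get $u_1,\dots,u_{n-1}\in U$, then take $u_n=u-\sum_{i<n}u_i$.
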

The next lemma gives a criterion for a subalgebra of Der($\kk[x,y]$) to be regular.

\begin{lem}\thlabel{RBS2}
	Every subalgebra of Der$(\kk[x,y])$ containing $\mathfrak{t}_2$ is regular.
\end{lem}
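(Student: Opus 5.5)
Let $\mathfrak{L}$ be a subalgebra of Der$(\kk[x,y])$ with $\mathfrak{t}_2\subseteq\mathfrak{L}$. Being regular means that $\mathfrak{L}$ is generated by homogeneous derivations. So it suffices to show that for every $\delta\in\mathfrak{L}$, each of its bihomogeneous components also lies in $\mathfrak{L}$; then $\mathfrak{L}$ is spanned, and in particular generated, by homogeneous derivations. The plan is to apply \thref{RBS1} twice, using the two commuting operators $\mathrm{ad}\,x\frac{\partial}{\partial x}$ and $\mathrm{ad}\,y\frac{\partial}{\partial y}$. Both elements lie in $\mathfrak{t}_2$, so $\mathfrak{L}$ is invariant under both.

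The first step is the eigenvalue computation. For a homogeneous derivation $\partial$ of bidegree $(a,b)$ we need
$$\Big[x\tfrac{\partial}{\partial x},\partial\Big]=a\,\partial,\qquad \Big[y\tfrac{\partial}{\partial y},\partial\Big]=b\,\partial .$$
It is enough to check this on the monomial fields $x^iy^j\partial/\partial x$ (bidegree $(i-1,j)$) and $x^iy^j\partial/\partial y$ (bidegree $(i,j-1)$). A direct computation gives, for example, $[x\partial_x,x^iy^j\partial_x]=ix^iy^j\partial_x-x^iy^j\partial_x=(i-1)x^iy^j\partial_x$, and the other three cases are analogous. Alternatively, one can deduce it for divergence-free components from \eqref{CR3} and \eqref{CR1} using $x\partial_x=\tfrac12(\Delta+\partial_{(0,0)})$ and $y\partial_y=\tfrac12(\Delta-\partial_{(0,0)})$, but the direct monomial check covers all of Der$(\kk[x,y])$ at once.

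Next, write $\delta\in\mathfrak{L}$ as a finite sum $\delta=\sum_{(a,b)}\delta_{(a,b)}$ of bihomogeneous components. Group the terms by $a$, setting $u_a=\sum_b\delta_{(a,b)}$. Each $u_a$ is an eigenvector of $L=\mathrm{ad}\,x\partial_x$ with eigenvalue $a$, and these eigenvalues are distinct, so \thref{RBS1} gives $u_a\in\mathfrak{L}$. Applying \thref{RBS1} again to $u_a$ with $L=\mathrm{ad}\,y\partial_y$, whose eigenvalues $b$ on the components are distinct, gives $\delta_{(a,b)}\in\mathfrak{L}$. One could instead use a single operator $\mathrm{ad}(x\partial_x+t\,y\partial_y)$ with $t$ chosen so that the finitely many values $a+tb$ are distinct, but the two-step argument avoids choosing $t$.

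The only potential obstacle is bookkeeping. \thref{RBS1} needs the summands to be eigenvectors with pairwise distinct eigenvalues, which the grouping ensures. It is also worth noting that in bidegree $(0,0)$ the homogeneous component may be any element of $\kk x\partial_x\oplus\kk y\partial_y$, which is fine. Having shown that $\mathfrak{L}$ is the direct sum of its homogeneous parts, we conclude that $\mathfrak{L}$ is generated by homogeneous derivations, i.e.\ it is regular.
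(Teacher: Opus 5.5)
Your proof is correct and takes the same approach as the paper: stability of $\mathfrak{L}$ under $\mathrm{ad}\big(x\frac{\partial}{\partial x}\big)$ and $\mathrm{ad}\big(y\frac{\partial}{\partial y}\big)$, with the bihomogeneous components as joint eigenvectors, followed by two applications of \thref{RBS1} (first grouping by $a$, then separating by $b$). The only difference is that you check the eigenvalues explicitly on the monomial fields, which the paper takes for granted.
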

\begin{proof}
	Let $\mathfrak{L}$ be a subalgebra of Der$(\kk[x,y])$ containing $\mathfrak{t}_2$ and $\delta$ be an arbitrary element of $\mathfrak{L}$.
	Suppose 
	\begin{center}
		$\delta=\underset{(a,b)\in \mathcal{M}}{\sum}\delta_{(a,b)}$ is the weight decomposition of $\delta$,
	\end{center}
	where $\delta_{(a,b)}$ is the homogeneous component of bidegree $(a,b)$ (cf. \cite[Proposition~3.8]{GFB}).
	Now, we are going to show that $\delta_{(a,b)}\in \mathfrak{L}$, for all $(a,b)$.
	
	We consider two elements
	\begin{center}
		$A:=$ ad$\bigg(x\dfrac{\partial}{\partial x}\bigg)$, $B:=$ ad$\bigg(y\dfrac{\partial}{\partial y}\bigg)\in $ End(Der($\kk[x,y]$)).	
	\end{center}  
	Then $\delta_{(a,b)}$ is an eigenvector of $A$ (resp. of $B$) with eigenvalue $a$ (resp. $b$).
	Next $\mathfrak{t}_2\subseteq \mathfrak{L}$; hence, $\mathfrak{L}$ is invariant under both $A$ and $B$.
	Therefore, $A\delta=\underset{(a,b)}{\sum}a\delta_{(a,b)}\in \mathfrak{L}$.
	Now the \\$A$-homogeneous components of $\delta$ are of the form $\delta_a=\underset{b}{\sum}\delta_{(a,b)}$ and hence,  by \thref{RBS1}, we have $\delta_a\in \mathfrak{L}$, for all $a$.
	After that, $B(\delta_a)=\underset{b}{\sum}b\delta_{(a,b)}$ and the $B$-homogeneous components of $\delta_a$ are $\delta_{(a,b)}$.
	Therefore, again by \thref{RBS1}, we can conclude that $\delta_{(a,b)}\in \mathfrak{L}$, for all $(a,b)$.
	Thus, $\mathfrak{L}$ contains all the homogeneous component of every element of itself.
	Hence, $\mathfrak{L}$ is regular.
\end{proof}
Now we discuss two results concerning maximal regular solvable subalgebras.
% of Lie(Aut($\mathbb{A}^2$)).
\begin{lem}\thlabel{RBS3}
	Every maximal regular solvable subalgebra of Lie(Aut($\mathbb{A}^2$)) contains $\mathfrak{t}_2$.
\end{lem}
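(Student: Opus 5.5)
The plan is to enlarge a maximal regular solvable subalgebra $\mathfrak{L}$ by $\mathfrak{t}_2$ and show that the result, $\mathfrak{L}+\mathfrak{t}_2$, is still a regular solvable subalgebra. Maximality then forces $\mathfrak{t}_2\subseteq\mathfrak{L}$.

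\emph{Step 1: $\mathfrak{L}$ is graded.} Since $\mathfrak{L}$ is generated by homogeneous derivations, and the bracket of two homogeneous derivations is homogeneous (of bidegree equal to the sum, or zero), $\mathfrak{L}$ is spanned by homogeneous elements. Hence
\[
\mathfrak{L}=\bigoplus_{(a,b)\in\mathcal{M}}\mathfrak{L}_{(a,b)},\qquad \mathfrak{L}_{(a,b)}=\mathfrak{L}\cap \mathrm{Lie}(\mathrm{Aut}(\mathbb{A}^2))_{(a,b)}.
\]
By the description in Section~\ref{P}, the homogeneous component of $\mathrm{Lie}(\mathrm{Aut}(\mathbb{A}^2))$ of bidegree $(a,b)\neq(0,0)$ is the line $\kk\partial_{(a,b)}$. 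The component of bidegree $(0,0)$ is $\kk\partial_{(0,0)}\oplus\kk\Delta=\mathfrak{t}_2$.

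\emph{Step 2: $\mathfrak{t}_2$ normalizes $\mathfrak{L}$.} Bracketing with elements of $\mathfrak{t}_2$ preserves bidegree; explicitly, $\mathrm{ad}(x\partial/\partial x)$ and $\mathrm{ad}(y\partial/\partial y)$ act on bidegree $(a,b)$ by the scalars $a$ and $b$. So for $(a,b)\neq(0,0)$ we get $[\mathfrak{t}_2,\mathfrak{L}_{(a,b)}]\subseteq \kk\partial_{(a,b)}\cap\mathfrak{L}=\mathfrak{L}_{(a,b)}$, since $\mathfrak{L}_{(a,b)}$ is either $0$ or the whole line. For bidegree $(0,0)$ we have $\mathfrak{L}_{(0,0)}\subseteq\mathfrak{t}_2$, and $\mathfrak{t}_2$ is Abelian, so the bracket vanishes. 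Thus $[\mathfrak{t}_2,\mathfrak{L}]\subseteq\mathfrak{L}$. Consequently $\mathfrak{L}':=\mathfrak{L}+\mathfrak{t}_2$ is a subalgebra of $\mathrm{Lie}(\mathrm{Aut}(\mathbb{A}^2))$ in which $\mathfrak{L}$ is an ideal.

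\emph{Step 3: $\mathfrak{L}'$ is regular and solvable.} Regularity follows either because $\mathfrak{L}'$ is generated by homogeneous elements (those of $\mathfrak{L}$ together with $x\partial/\partial x$ and $y\partial/\partial y$), or directly from \thref{RBS2}. For solvability, $\mathfrak{L}'/\mathfrak{L}\cong\mathfrak{t}_2/(\mathfrak{t}_2\cap\mathfrak{L})$ is Abelian, so $(\mathfrak{L}')^{(1)}\subseteq\mathfrak{L}$ and hence $(\mathfrak{L}')^{(r+1)}\subseteq\mathfrak{L}^{(r)}=0$ for $r$ the derived length of $\mathfrak{L}$. By maximality $\mathfrak{L}'=\mathfrak{L}$, i.e.\ $\mathfrak{t}_2\subseteq\mathfrak{L}$.

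The only point needing care is Step 2. The argument relies on each nonzero bidegree component of $\mathrm{Lie}(\mathrm{Aut}(\mathbb{A}^2))$ being one-dimensional, and on the $(0,0)$ component being exactly the Abelian algebra $\mathfrak{t}_2$. Both facts are supplied by the decomposition \eqref{ED} and the description of homogeneous elements in Section~\ref{P}.
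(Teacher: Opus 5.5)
Your proof is correct and is essentially the paper's argument: you adjoin $\mathfrak{t}_2$, use that homogeneous elements are common eigenvectors of $\mathrm{ad}(\mathfrak{t}_2)$ so that the derived algebra of the enlarged (regular) subalgebra lies in $\mathfrak{L}$, and conclude by maximality. A minor remark: Step 2 does not actually need the one-dimensionality of the nonzero bidegree components, because $\mathrm{ad}(x\partial/\partial x)$ and $\mathrm{ad}(y\partial/\partial y)$ act by scalars on each entire bidegree component, so any graded subspace is $\mathfrak{t}_2$-stable.
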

\begin{proof}
	Let $\mathfrak{L}$ be a maximal regular solvable subalgebra of Lie(Aut($\mathbb{A}^2$)).
	Consider the\\ subalgebra  Lie($\mathfrak{t}_2,\mathfrak{L}$) containing $\mathfrak{L}$.
	Then by \thref{RBS2}, Lie($\mathfrak{t}_2,\mathfrak{L}$) is regular.
	Note that $$\mathfrak{t}_2=\kk\partial_{(0,0)}\oplus\kk\Delta.$$
	Now, if $\delta_{(a,b)}$ is a homogeneous derivation of Lie(Aut($\mathbb{A}^2$)) of bidegree $(a,b)\in \mathcal{M}$, then by  commutation relation \eqref{CR1} (resp. \eqref{CR3}), it follows that $\delta_{(a,b)}$ is an eigenvector of ad($\partial_{(0,0)}$) (resp. ad($\Delta$)) with eigenvalue $a-b$ (resp. $a+b$).
	Hence, \begin{center}
		Lie($\mathfrak{t}_2,\mathfrak{L}$)$^{(1)}\subseteq \mathfrak{L}$.	
	\end{center} 
	Therefore,
	Lie($\mathfrak{t}_2,\mathfrak{L}$) is a regular solvable subalgebra.
	Hence, Lie($\mathfrak{t}_2,\mathfrak{L}) = \mathfrak{L}$.
\end{proof}
\begin{lem}\thlabel{RBS4}
	Every maximal regular solvable subalgebra of Lie(Aut($\mathbb{A}^2$)) is a Borel\\ subalgebra of Lie(Aut($\mathbb{A}^2$)). 	
\end{lem}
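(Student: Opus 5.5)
Let $\mathfrak{L}$ be a maximal regular solvable subalgebra of Lie(Aut($\mathbb{A}^2$)). To show it is a Borel subalgebra, take any solvable subalgebra $\mathfrak{S}$ of Lie(Aut($\mathbb{A}^2$)) with $\mathfrak{L}\subseteq \mathfrak{S}$; we must show $\mathfrak{S}=\mathfrak{L}$. By \thref{RBS3}, $\mathfrak{t}_2\subseteq \mathfrak{L}\subseteq \mathfrak{S}$. Then \thref{RBS2} applies to $\mathfrak{S}$ and shows that $\mathfrak{S}$ is regular, i.e.\ $\mathfrak{S}$ contains every homogeneous component of each of its elements. Since $\mathfrak{L}$ is maximal among regular solvable subalgebras and $\mathfrak{S}$ is regular and solvable, we get $\mathfrak{S}=\mathfrak{L}$. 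Hence $\mathfrak{L}$ is maximal among all solvable subalgebras, i.e.\ a Borel subalgebra.

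Two points need checking. First, \thref{RBS2} is stated for subalgebras of Der($\kk[x,y]$), and $\mathfrak{S}\subseteq$ Lie(Aut($\mathbb{A}^2$)) $\subseteq$ Der($\kk[x,y]$), so it applies directly. The homogeneous components it produces lie in $\mathfrak{S}$ and hence in Lie(Aut($\mathbb{A}^2$)). Consistently, Vec$^c(\mathbb{A}^2)$ is a graded subspace: divergence takes each homogeneous component of bidegree $(a,b)$ to a polynomial of degree $(a,b)$, so constant divergence forces the components of nonzero bidegree to be divergence-free. Second, we must be clear about what ``regular'' means. If the paper defines a regular subalgebra as one generated by homogeneous derivations, then a subalgebra closed under taking homogeneous components is generated by the homogeneous elements it contains, so it is regular in that sense too. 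The proof of \thref{RBS2} shows exactly this closure property.

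There is no real obstacle here. The only delicate point is \thref{RBS3}, which is already proved; it is what makes $\mathfrak{t}_2\subseteq\mathfrak{S}$ available, and everything else follows formally. If one is worried that the definition of regularity is the closure property rather than generation, the two coincide by the argument above, so the proof works under either convention.
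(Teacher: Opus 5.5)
Your proof is correct and uses the same ingredients as the paper: \thref{RBS3} puts $\mathfrak{t}_2$ inside any overalgebra, \thref{RBS2} then makes that overalgebra regular, and maximality among regular solvable subalgebras finishes the argument. The only difference is that the paper first invokes \cite[Corollary~2.3]{AZ3} to obtain a Borel subalgebra $\mathfrak{B}\supseteq\mathfrak{L}$ and shows $\mathfrak{B}=\mathfrak{L}$, whereas you test an arbitrary solvable overalgebra directly, which shows that this external existence result is not actually needed here.
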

\begin{proof}
	Let $\mathfrak{L}$ be a maximal regular solvable subalgebra of Lie(Aut($\mathbb{A}^2$)).
	Then by\\ \cite[Corollary~2.3]{AZ3}, there exists a Borel subalgebra of Lie(Aut($\mathbb{A}^2$)), say $\mathfrak{B}$, containing~$\mathfrak{L}$.
	Now by \thref{RBS3}, we have $\mathfrak{t}_2\subseteq \mathfrak{L}\subseteq \mathfrak{B}$. 
	Hence, by \thref{RBS2}, it follows that $\mathfrak{B}$ is regular.
	Therefore, $\mathfrak{L}=\mathfrak{B}$ and the result follows.
\end{proof}
\begin{rem}\thlabel{RBS5}
	{\rm 
	It clearly follows from the above statement that
	the set of all regular Borel subalgebras of Lie(Aut($\mathbb{A}^2$)) coincides with the set of all maximal regular solvable subalgebras of Lie(Aut($\mathbb{A}^2$)).
	}
\end{rem}

Let $\mathfrak{B}$ be an arbitrary regular Borel subalgebra of Lie(Aut($\mathbb{A}^2$)).
Then by \thref{RBS5} and \thref{RBS3}, it follows that $\mathfrak{B}$ is of the following form:
\begin{equation}\label{PE0}
	\mathfrak{B}= \text{Lie}(\mathfrak{t}_2,\mathcal{S}) = \text{Lie}(\Delta,\{(0,0)\}\cup\mathcal{S}), \text{ for some } \mathcal{S}\subseteq \mathcal{M}\setminus\{(0,0)\}.
\end{equation}
Now, $\mathfrak{t}_2\subsetneq \mathfrak{j}^{+},$ a Borel subalgebra of Lie(Aut($\mathbb{A}^2$)).
Hence, $\mathfrak{t}_2$ is not a regular Borel subalgebra of Lie(Aut($\mathbb{A}^2$)).
Therefore,
\begin{equation}\label{PE01}
	\mathcal{S}\neq \emptyset.
\end{equation}
Next, by commutation relations \eqref{CR1} and \eqref{CR3} respectively, we have
\begin{center}
	$[\partial_{(0,0)}, \partial_{(c,d)}]=(c-d)\partial_{(c,d)}$
	and 
	$[\Delta, \partial_{(c,d)}]=(c+d)\partial_{(c,d)}$, for all $(c,d)\in \mathcal{M}$.
\end{center}
Hence,
\begin{equation}\label{PE1}
	\mathfrak{B}^{(1)}= \text{Lie}(\mathcal{S}).
\end{equation}
So in the next two sections of this paper, we study {\it regular Abelian subalgebras}, i.e., Abelian subalgebras generated by
homogeneous derivations with respect to the standard $\mathbb{Z}^2$-grading  and  {\it regular solvable subalgebras}, i.e., solvable subalgebras generated by
homogeneous derivations with respect to the standard $\mathbb{Z}^2$-grading.
% of the form Lie($\mathcal{S}$), in Section~\ref{RAS} and Section~\ref{RSS} respectively. 
%for some $\mathcal{S}\subset \mathcal{M}\setminus\{(0,0)\}$.

\section{Regular Abelian subalgebras}\label{RAS}
%
%%Following the definition of regular Borel subalgebra (\thref{DF1}) 
%We now introduce regular Abelian subalgebra.
%
%\begin{Defn}
%{\rm 
%	An Abelian subalgebra of Lie(Aut($\mathbb{A}^2$)) is called  {\it regular} if it is generated by
%    homogeneous derivations with respect to the standard $\mathbb{Z}^2$-grading.
%}	
%\end{Defn}
%\noindent
In this section, we discuss regular Abelian subalgebras and
maximal regular Abelian subalgebras of Lie(Aut($\mathbb{A}^2$)).
In Propositions~\ref{RAS2} and \ref{RAS3}, we show that
the maximal regular Abelian subalgebras of Lie(Aut($\mathbb{A}^2$)) are also maximal Abelian subalgebras. 
At first, we describe all the regular Abelian subalgebras containing $\Delta$.
\begin{lem}\thlabel{RAS0}
	Let $\mathfrak{L}$ be a regular subalgebra of Lie(Aut($\mathbb{A}^2$)) containing $\Delta$.
	Then $\mathfrak{L}$ is Abelian if and only if either $\mathfrak{L}$ is equal to  Lie$(\Delta, \partial_{(1,-1)})$ or Lie$(\Delta, \partial_{(-1,1)})$ or $ \mathfrak{t}_2$.
\end{lem}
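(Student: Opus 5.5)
The plan is to exploit the eigenvalue structure of $\mathrm{ad}(\Delta)$ given by the commutation relation \eqref{CR3}. Since $\mathfrak{L}$ is regular, it is generated by homogeneous derivations. Brackets of homogeneous derivations are again homogeneous, so $\mathfrak{L}$ is spanned by homogeneous elements. The first step is to observe that if $\mathfrak{L}$ is Abelian and contains $\Delta$, then every homogeneous element of $\mathfrak{L}$ of bidegree $(c,d)\neq(0,0)$ is a multiple of $\partial_{(c,d)}$. By \eqref{CR3} this gives $0=[\Delta,\partial_{(c,d)}]=(c+d)\partial_{(c,d)}$, which forces $c+d=0$. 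Intersecting the line $c+d=0$ with $\mathcal{M}$ leaves only the bidegrees $(0,0)$, $(1,-1)$ and $(-1,1)$. Hence
\[
\mathfrak{L}\subseteq \mathfrak{t}_2\oplus\kk\partial_{(1,-1)}\oplus\kk\partial_{(-1,1)},
\]
and $\mathfrak{L}$ is a graded subspace of this space.

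The second step is to rule out mixing. Using \eqref{CR1} I will compute
\[
[\partial_{(1,-1)},\partial_{(-1,1)}]=\begin{vmatrix}0&2\\2&0\end{vmatrix}\partial_{(0,0)}=-4\partial_{(0,0)}\neq 0 .
\]
I will also compute $[\partial_{(0,0)},\partial_{(\pm1,\mp1)}]=\pm2\,\partial_{(\pm1,\mp1)}\neq0$. Any bidegree-$(0,0)$ element has the form $\lambda\partial_{(0,0)}+\mu\Delta$, and if $\lambda\neq0$ it still fails to commute with $\partial_{(\pm1,\mp1)}$, because $\Delta$ commutes with $\partial_{(\pm1,\mp1)}$. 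Consequently an Abelian $\mathfrak{L}$ can contain at most one of $\partial_{(1,-1)}$ and $\partial_{(-1,1)}$. If it contains one of them, its degree-$(0,0)$ part is exactly $\kk\Delta$, so $\mathfrak{L}$ equals Lie$(\Delta,\partial_{(1,-1)})$ or Lie$(\Delta,\partial_{(-1,1)})$. Otherwise $\mathfrak{L}\subseteq\mathfrak{t}_2$, so $\mathfrak{L}$ is either $\mathfrak{t}_2$ or $\kk\Delta$.

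The degenerate case $\kk\Delta$ is the one point I expect to need care with wording. It is Abelian, regular, and contained in each of the three listed algebras, so the statement must be read as allowing it; I will treat it as the trivial case. The converse direction is immediate. The algebra $\mathfrak{t}_2$ is Abelian. Each of Lie$(\Delta,\partial_{(\pm1,\mp1)})$ is the two-dimensional span $\kk\Delta\oplus\kk\partial_{(\pm1,\mp1)}$ with zero bracket, by \eqref{CR3} and $c+d=0$. All of these are regular because they are generated by homogeneous derivations.
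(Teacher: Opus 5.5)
Your proposal is correct and follows essentially the paper's route. The paper's proof uses \eqref{CR3} to restrict the bidegrees commuting with $\Delta$ to $(0,0)$, $(1,-1)$, $(-1,1)$, and then uses the brackets from \eqref{CR1} to exclude the remaining combinations. Your explicit handling of degree-$(0,0)$ elements $\lambda\partial_{(0,0)}+\mu\Delta$ makes precise what the paper leaves implicit. Your remark on $\kk\Delta$ is also right: it is a regular Abelian subalgebra containing $\Delta$ that is not among the three listed, so the statement should be read with ``contained in'' rather than ``equal to'', as in \thref{GC4}(a).
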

\begin{proof}
	By commutation relation~\eqref{CR3}, it follows that $[\Delta,\partial_{(c,d)}]=0$, if and only if \\$(c,d)\in \{(0,0),(1,-1),(-1,1)\}$.
	Now, by commutation relation~\eqref{CR1}, we have
	\begin{center}
	$[\partial_{(0,0)},\partial_{(1,-1)}]=2\partial_{(1,-1)}$,
	$[\partial_{(-1,1)},\partial_{(0,0)}]=2\partial_{(-1,1)}$
	and $[\partial_{(-1,1)},\partial_{(1,-1)}]=4\partial_{(0,0)}$.
	\end{center}
	Hence, the result follows.
\end{proof}

Next, we consider the regular subalgebras not containing $\Delta$ and give a criterion for commutativity.
\begin{lem}\thlabel{RAS1}
	For any $\mathcal{S}\subseteq \mathcal{M}$,
	Lie$(\mathcal{S})$ is Abelian if and only if either  there exists a line passing through $(-1,-1)$ containing $\mathcal{S}$ or $\mathcal{S}=\{(0,-1),(-1,0)\}$.
\end{lem}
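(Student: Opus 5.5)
We prove both directions using the commutation relation~\eqref{CR1} and \thref{P3}. The one point to watch is $(-1,-1)\notin\mathcal{M}$ and $\partial_{(-1,-1)}=0$. Because of this, a nonzero determinant in \eqref{CR1} does not force a nonzero bracket when $(a+c,b+d)=(-1,-1)$. That happens only for the pair $\{(0,-1),(-1,0)\}$, since the only elements of $\mathcal{M}$ summing to $(-1,-1)$ are $(0,-1)$ and $(-1,0)$.

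\emph{Sufficiency.} First let $\mathcal{S}\subseteq\ell$ for a line $\ell$ through $(-1,-1)$. For any $(a,b),(c,d)\in\mathcal{S}$ with $(c,d)\neq(-1,-1)$ we have $\ell=\ell^{(c,d)}$, so $(a,b)\in\ell^{(c,d)}$. By \thref{P3}(a), $[\partial_{(a,b)},\partial_{(c,d)}]=0$. The generators therefore pairwise commute, and the algebra they generate, Lie$(\mathcal{S})$, is just their span and is Abelian. Next let $\mathcal{S}=\{(0,-1),(-1,0)\}$. Then $[\partial_{(-1,0)},\partial_{(0,-1)}]=\partial_{(-1,-1)}=0$, as computed in the proof of \thref{P3}, so Lie$(\mathcal{S})$ is again Abelian. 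Concretely, it is spanned by $\partial/\partial x$ and $\partial/\partial y$ up to scalars.

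\emph{Necessity.} Suppose Lie$(\mathcal{S})$ is Abelian. If $|\mathcal{S}|\leqslant 1$, a line through $(-1,-1)$ containing $\mathcal{S}$ clearly exists, so assume $|\mathcal{S}|\geqslant2$. Take distinct $(a,b),(c,d)\in\mathcal{S}$. By \eqref{CR1},
\[
0=[\partial_{(a,b)},\partial_{(c,d)}]=\begin{vmatrix} c+1 & a+1\\ d+1 & b+1\end{vmatrix}\partial_{(a+c,b+d)}.
\]
So either the determinant vanishes, which means $(a,b)\in\ell^{(c,d)}$, or $\partial_{(a+c,b+d)}=0$. The latter forces $(a+c,b+d)=(-1,-1)$, since $\partial_{(e,f)}\neq 0$ for $(e,f)\in\mathcal{M}$. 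Within $\mathcal{M}$ this means $\{(a,b),(c,d)\}=\{(0,-1),(-1,0)\}$.

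It remains to handle the exceptional pair.
\begin{itemize}
\item If $\mathcal{S}$ contains both $(0,-1)$ and $(-1,0)$ together with a third point $(e,f)$, then $(e,f)$ must be collinear with $(-1,-1)$ and $(0,-1)$, so $f=-1$. It must also be collinear with $(-1,-1)$ and $(-1,0)$, so $e=-1$. That gives $(e,f)=(-1,-1)\notin\mathcal{M}$, a contradiction. Hence $\mathcal{S}=\{(0,-1),(-1,0)\}$.
\item Otherwise every pair of points of $\mathcal{S}$ lies on a common line with $(-1,-1)$. Fix $p\in\mathcal{S}$; then $p\neq(-1,-1)$, so $\ell^{p}$ is a well-defined line. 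Every other $q\in\mathcal{S}$ lies on $\ell^{p}$, so $\mathcal{S}\subseteq\ell^{p}$, which is a line through $(-1,-1)$.
\end{itemize}

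The only delicate step is this bookkeeping of the degenerate bracket landing on $(-1,-1)$, and showing that the exceptional pair cannot be enlarged. Everything else is a direct reading of \eqref{CR1}.
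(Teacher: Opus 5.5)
Your proof is correct and follows the paper's argument: the pairwise criterion read off from \eqref{CR1} (the paper's Case I), then showing that the exceptional pair cannot be enlarged, then collinearity of all pairs with $(-1,-1)$. The only variations are minor. You rule out a third point alongside $(0,-1),(-1,0)$ by reusing the pairwise criterion (such a point would have to lie on both $\ell^{(0,-1)}=\{y=-1\}$ and $\ell^{(-1,0)}=\{x=-1\}$), whereas the paper computes $[\partial_{(0,-1)},\partial_{(a,b)}]$ and $[\partial_{(-1,0)},\partial_{(a,b)}]$ directly; you also treat $|\mathcal{S}|\leqslant 1$, which the paper leaves implicit.
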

\begin{proof}
	By commutation relation~\eqref{CR1} and $\partial_{(-1,-1)}=0$, it follows that Lie($\{(0,-1),(-1,0)\}$) is Abelian.
	
	Next, suppose there exists a line containing $|{(-1,-1)}\cup\mathcal{S}$.
	Let $(a,b),(c,d)\in\mathcal{S}\subseteq\mathcal{M}$.
	Then $(a,b)\in \ell^{(c,d)}$. 
	Hence, by \thref{P3}(a), we have $[\partial_{(a,b)}, \partial_{(c,d)}]=0$.
Therefore, Lie($\mathcal{S}$) is Abelian.
	
	\smallskip
	Now we prove the converse.
	Suppose $\mathcal{S}\subseteq \mathcal{M}$ and Lie($\mathcal{S}$) is Abelian.
	We now divide the proof in two cases.
	
	\noindent
	{\bf Case I : } Suppose $\mathcal{S}=\{(a,b),(c,d)\}$.\\
	\noindent
%	Let  be two distinct arbitrary points of $\mathcal{S}$.
	Then $[\partial_{(a,b)}, \partial_{(c,d)}]=0$.
	Hence, by commutation relation~\eqref{CR1}, either
	\begin{center} 
	$\partial_{(a+c,b+d)}=0$, i.e.,	$\{(a,b),(c,d)\}= \{(0,-1),(-1,0)\}$ 	
		\end{center}	
	or 
	\begin{center}
		$\begin{vmatrix}
			c+1 & a+1\\
			d+1 & b+1
		\end{vmatrix}=0$, i.e., $(a,b)\in \ell^{(c,d)}$.
	\end{center}
	\noindent
	{\bf Case II : } Suppose $\mid\mathcal{S}\mid\geqslant 3$.\\
	\noindent
	Note that by commutation relation~\eqref{CR1}, we have
	\begin{center}
		$[\partial_{(0,-1)}, \partial_{(a,b)}]=-(b+1)\partial_{(a,b-1)}$ and 
		$[\partial_{(-1,0)}, \partial_{(a,b)}]=(a+1)\partial_{(a-1,b)}$, for any $(a,b)\in \mathcal{M}$.
	\end{center}
	Therefore, for any $(a,b)\in \mathcal{M}\setminus\{(0,-1),(-1,0)\}$, Lie($\{(0,-1),(-1,0),(a,b)\}$) is not Abelian.
	Hence, it follows that $\{(0,-1),(-1,0)\}\not\subseteq\mathcal{S}$.
	Now, by Case I, for any $(a,b),(c,d)\in \mathcal{S}$, we have $(a,b)\in \ell^{(c,d)}.$
	Therefore, $\mathcal{S}\subseteq\ell^{(a,b)}$, for any $(a,b)\in \mathcal{S}$.
	\end{proof}
%	Now we are going to discuss regular Abelian subalgebras containing $\Delta$.
	\begin{rem}\thlabel{RAS4}
	{\rm
		By \thref{RAS0} and \thref{RAS1}, it follows that   Lie$(\Delta, \partial_{(1,-1)})$,
		Lie$(\Delta, \partial_{(-1,1)})$, $ \mathfrak{t}_2$ (= Lie$(\Delta, \{(0,0)\})$), Lie$(\{(0,-1),(-1,0)\})$ and Lie($\ell^{(a,b)}$), for any $(a,b)\in \mathcal{M}$  are the only maximal regular Abelian subalgebras of 
		Lie(Aut($\mathbb{A}^2$)).
	}	
\end{rem}

	The next two propositions (\ref{RAS2} and \ref{RAS3}) show that the maximal regular Abelian subalgebras of Lie(Aut($\mathbb{A}^2$)) are also maximal Abelian subalgebras.
	
	\begin{prop}\thlabel{RAS2}
		The subalgebras Lie$(\{(0,-1),(-1,0)\})$, Lie$(\Delta, \partial_{(1,-1)})$,
		Lie$(\Delta, \partial_{(-1,1)})$ and $\mathfrak{t}_2$ are maximal  Abelian subalgebras of 
		Lie(Aut($\mathbb{A}^2$)).	
	\end{prop}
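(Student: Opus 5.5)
The approach is to show, for each of the four subalgebras $\mathfrak{A}$, that its centralizer in Lie(Aut($\mathbb{A}^2$)) equals $\mathfrak{A}$. Any Abelian subalgebra containing $\mathfrak{A}$ lies in this centralizer, so maximality follows. Each $\mathfrak{A}$ is Abelian by \thref{RAS0} and \thref{RAS1}, so only the centralizer computation remains.

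The key reduction is that the centralizer of a set of homogeneous elements is a graded subspace. Indeed, if $\delta=\sum\delta_{(a,b)}$ is the weight decomposition and $[h,\delta]=0$ for $h$ homogeneous of bidegree $(p,q)$, then $[h,\delta_{(a,b)}]$ has bidegree $(a+p,b+q)$. These bidegrees are distinct for distinct $(a,b)$, so each $[h,\delta_{(a,b)}]$ vanishes separately. Hence it suffices to decide which homogeneous elements commute with $\mathfrak{A}$. These are $\partial_{(c,d)}$ for $(c,d)\in\mathcal{M}\setminus\{(0,0)\}$, together with $\lambda\partial_{(0,0)}+\mu\Delta$ in bidegree $(0,0)$.

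\textbf{Case $\mathfrak{t}_2$.} The element $\partial_{(c,d)}$ commutes with both $\partial_{(0,0)}$ and $\Delta$ only if $c-d=0=c+d$, which forces $(c,d)=(0,0)$. So the centralizer is $\mathfrak{t}_2$.

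\textbf{Case Lie$(\Delta,\partial_{(1,-1)})$.} By \eqref{CR3}, commuting with $\Delta$ forces $c+d=0$, so $(c,d)\in\{(0,0),(1,-1),(-1,1)\}$. In bidegree $(0,0)$, the bracket $[\lambda\partial_{(0,0)}+\mu\Delta,\partial_{(1,-1)}]=2\lambda\partial_{(1,-1)}$ forces $\lambda=0$. The element $\partial_{(-1,1)}$ is excluded because its bracket with $\partial_{(1,-1)}$ is $-4\partial_{(0,0)}\neq0$. The case Lie$(\Delta,\partial_{(-1,1)})$ is symmetric.

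\textbf{Case Lie$(\{(0,-1),(-1,0)\})=\kk\partial_x\oplus\kk\partial_y$.} Here I would simply note that a vector field commuting with $\partial/\partial x$ and $\partial/\partial y$ has constant coefficients. Equivalently, using \eqref{CR1} as in the proof of \thref{RAS1}, $[\partial_{(0,-1)},\partial_{(a,b)}]=-(b+1)\partial_{(a,b-1)}$ and $[\partial_{(-1,0)},\partial_{(a,b)}]=(a+1)\partial_{(a-1,b)}$. Both vanish only if $(a,b)\in\{(0,-1),(-1,0)\}$, where $\partial_{(-1,-1)}=0$. In bidegree $(0,0)$, the relevant coefficients are nonzero, since $\lambda\partial_{(0,0)}+\mu\Delta=(\lambda+\mu)x\partial_x+(\mu-\lambda)y\partial_y$ with not both coefficients zero.

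The main obstacle is the graded-centralizer reduction, because the centralizer could in principle contain non-homogeneous elements. The bidegree-additivity argument above handles this cleanly, without needing \thref{RBS2}. Everything else is a direct check with \eqref{CR1} and \eqref{CR3}, where one only has to watch the degenerate product $\partial_{(-1,-1)}=0$.
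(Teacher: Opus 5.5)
Your proposal is correct and is essentially the paper's argument. The paper also shows that any $\delta$ commuting with the given subalgebra already lies in it, using the bigrading together with \eqref{CR1} and \eqref{CR3}; for the three subalgebras containing $\Delta$ it relies on the same computation as in \thref{RAS0}, whereas you make explicit the graded-centralizer reduction that the paper leaves implicit (and the order-dependent sign of $[\partial_{(-1,1)},\partial_{(1,-1)}]=4\partial_{(0,0)}$ is immaterial).
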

	\begin{proof}Let us first prove the case of Lie($\{(0,-1), (-1,0)\}$).
		
		Let $\mathfrak{L}$ denote the subalgebra Lie($\{(0,-1), (-1,0)\}$).
		By \thref{RAS1}, we know that $\mathfrak{L}$ is Abelian.
		Now we are going to show that $\mathfrak{L}$ is maximal among all the Abelian subalgebras of Lie(Aut($\mathbb{A}^2$)). 
		
		Let $\delta\in$ Lie(Aut($\mathbb{A}^2$)) be such that Lie$(\{(0,-1),(-1,0)\},\delta)$ is Abelian. 
		Then
		\begin{center}
			$[\delta,\partial_{(0,-1)}]=[\delta,\partial_{(-1,0)}]=[\partial_{(0,-1)},\partial_{(0,-1)}]=0$.
		\end{center}
		Now Lie(Aut($\mathbb{A}^2$)) is a bigraded Lie algebra with
		\begin{center}
			bideg($x^iy^j\partial/\partial x$)=($i-1,j$) and bideg($x^iy^j\partial/\partial y$)=($i,j-1$), for any $i,j\in \ZZ$.
		\end{center}
		Therefore,
		by commutation relations~\eqref{CR1} and \eqref{CR3}, we have 
		$$\delta\in \kk\partial_{(0,-1)}\oplus\kk\partial_{(-1,0)}.$$
%		\begin{center} $\delta=\lambda\partial_{(0,-1)}+\mu\partial_{(-1,0)}$, for some $\lambda,\mu \in \kk$.
%		\end{center}
		Hence, $\mathfrak{L}$ is a maximal  Abelian subalgebra of 
		Lie(Aut($\mathbb{A}^2$)).
		
		The rest of the cases follow similarly, with the help of \thref{RAS0} instead of \thref{RAS1}.	
	\end{proof}
	It is clear that all the Abelian Lie subalgebras mentioned in \thref{RAS2} are isomorphic.
	
	Recall that for any $(a,b)\in \mathcal{M}$, Lie$(\ell^{(a,b)})$ is the subalgebra generated by the set $\{\partial_{(c,d)}\mid (c,d)\in \mathcal{M}\cap \ell^{(a,b)}\}$ and  the cardinality of $\mathcal{M}\cap \ell^{(a,b)}$ is $\aleph_0$.
	\begin{prop}\thlabel{RAS3}
		For any $(a,b)\in \mathcal{M}$, Lie$(\ell^{(a,b)})$ is a maximal Abelian subalgebra of Lie(Aut($\mathbb{A}^2$)).	
\end{prop}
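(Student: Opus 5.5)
The plan is to combine the commutativity criterion of \thref{RAS1} with the bigrading of Lie(Aut($\mathbb{A}^2$)). This reduces maximality to a statement about a single homogeneous derivation, which is then settled using the commutation relations \eqref{CR1} and \eqref{CR3}.

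First, since $\mathcal{M}\cap\ell^{(a,b)}$ lies on a line through $(-1,-1)$, \thref{RAS1} shows that Lie$(\ell^{(a,b)})$ is Abelian. By \thref{P3}(a) it is in fact the span of the $\partial_{(c,d)}$ with $(c,d)\in\mathcal{M}\cap\ell^{(a,b)}$. Now let $\delta\in$ Lie(Aut($\mathbb{A}^2$)) commute with every such $\partial_{(c,d)}$, and write $\delta=\sum\delta_{(p,q)}$ as the sum of its homogeneous components. Each component again lies in Lie(Aut($\mathbb{A}^2$)), because the grading restricts to Vec$^c(\mathbb{A}^2)$. The bracket with a homogeneous $\partial_{(c,d)}$ shifts bidegrees by $(c,d)$, so every component $\delta_{(p,q)}$ also commutes with all of Lie$(\ell^{(a,b)})$. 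It therefore suffices to treat a homogeneous $\delta$. For bidegree $(p,q)\neq(0,0)$ this means $\delta=\partial_{(p,q)}$ up to a scalar. For bidegree $(0,0)$ it means $\delta=\lambda\partial_{(0,0)}+\mu\Delta$.

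For $(p,q)\neq(0,0)$, use that $\mathcal{M}\cap\ell^{(a,b)}$ is infinite. At most one point $(c,d)$ of it satisfies $(p+c,q+d)=(-1,-1)$, which is the exceptional pair $\{(0,-1),(-1,0)\}$. Choose any other point $(c,d)\in\mathcal{M}\cap\ell^{(a,b)}$. Then $(p,q)\notin\ell^{(c,d)}$ would make $[\partial_{(p,q)},\partial_{(c,d)}]$ a non-zero multiple of $\partial_{(p+c,q+d)}$ by \eqref{CR1} and \thref{P3}(b), which is a contradiction. Hence $(p,q)\in\ell^{(c,d)}=\ell^{(a,b)}$; the two lines coincide because $(c,d)\neq(-1,-1)$ lies on $\ell^{(a,b)}$. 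So $\partial_{(p,q)}\in$ Lie$(\ell^{(a,b)})$.

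The bidegree $(0,0)$ case is the main point to check. By \eqref{CR1} and \eqref{CR3},
$$[\lambda\partial_{(0,0)}+\mu\Delta,\partial_{(c,d)}]=\bigl(\lambda(c-d)+\mu(c+d)\bigr)\partial_{(c,d)},$$
so we need $\lambda(c-d)+\mu(c+d)=0$ for all $(c,d)\in\mathcal{M}\cap\ell^{(a,b)}$. There are two cases.
\begin{enumerate}[\rm(i)]
\item If $\ell^{(a,b)}$ does not pass through the origin, pick two distinct points of $\mathcal{M}\cap\ell^{(a,b)}$. They are linearly independent, since a line not through $0$ contains no two proportional distinct points. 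The resulting $2\times 2$ linear system in $(\lambda,\mu)$ is then nondegenerate, because $(c,d)\mapsto(c-d,c+d)$ is invertible. So $\lambda=\mu=0$.
\item If $\ell^{(a,b)}$ passes through the origin, it is the diagonal $\ell_{(1,1)}$. Taking $(c,c)$ with $c\geqslant1$ forces $2c\mu=0$, so $\mu=0$. Then $\delta=\lambda\partial_{(0,0)}$ already lies in Lie$(\ell^{(a,b)})$, since $(0,0)\in\mathcal{M}\cap\ell_{(1,1)}$.
\end{enumerate}
In all cases every component of $\delta$ lies in Lie$(\ell^{(a,b)})$, so $\delta\in$ Lie$(\ell^{(a,b)})$, and maximality follows.
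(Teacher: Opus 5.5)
Your proof is correct and follows essentially the same route as the paper: you split $\delta$ into homogeneous components via the bigrading, observe that each component must commute with every $\partial_{(c,d)}$ for $(c,d)\in\mathcal{M}\cap\ell^{(a,b)}$, and conclude with the pairwise commutation criterion underlying \thref{RAS1}. You are in fact more careful than the paper. Its proof writes $\delta$ as a combination of the $\partial_{(r,s)}$ alone and so silently drops a possible $\Delta$-term in bidegree $(0,0)$, and your cases (i) and (ii) close exactly that gap.
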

\begin{proof}
	Let $(a,b)\in \mathcal{M}$.
	By \thref{RAS1} we know that Lie$(\ell^{(a,b)})$ is Abelian.
	Now, we are going to show that Lie$(\ell^{(a,b)})$ is maximal among all the Abelian subalgebras of Lie(Aut($\mathbb{A}^2$)).  
	
	Let $\delta\in $ Lie(Aut($\mathbb{A}^2$)) be
	such that Lie$(\ell^{(a,b)},\delta)$ is Abelian. 
	Then
	\begin{center}
		$[\delta, \partial_{(c,d)}]=0$,  for all $(c,d)\in \ell^{(a,b)}\cap \mathcal{M}$.
	\end{center}
	Recall that Lie(Aut($\mathbb{A}^2$)) is a bigraded Lie algebra with
	\begin{center}
		bideg($x^iy^j\partial/\partial x$)=($i-1,j$) and bideg($x^iy^j\partial/\partial y$)=($i,j-1$), for any $i,j\in \ZZ$.
	\end{center}
	Therefore, there exists $\mathcal{S}\subseteq \mathcal{M}$ such that
	\begin{center}
		$\delta=\underset{(r,s)\in \mathcal{S}}{\sum}\lambda_{(r,s)}\partial_{(r,s)}$, $[\partial_{(c,d)},\partial_{(r,s)}]=0, \, \forall \,(c,d)\in \ell^{(a,b)}\cap \mathcal{M}$ and $\lambda_{(r,s)}\in \kk^{*},\, \forall (r,s)\in \mathcal{S}$.
	\end{center} 
	Hence, for each $(r,s)\in \mathcal{S}$, the subalgebra Lie($\ell^{(a,b)},\{(r,s)\}$) is Abelian.
	Therefore, by \\
	\thref{RAS1}, we have 
	$$\mathcal{S}\subseteq \ell^{(a,b)}.$$
	Hence, Lie$(\ell^{(a,b)})$ is a maximal Abelian subalgebra of Lie(Aut($\mathbb{A}^2$)).
	\end{proof}
	It is evident that for any $(a,b),(c,d)\in \mathcal{M}$, Lie($\ell^{(a,b)}$) is isomorphic to Lie($\ell^{(c,d)}$), because both have countable basis of cardinality $\aleph_0$.
 
 It is also clear that Lie$(\{(0,-1),(-1,0)\})$ and Lie($\ell^{(a,b)}$) are never isomorphic.
	
\section{Regular solvable subalgebras}\label{RSS}
%Following the definition of regular Borel subalgebra (\thref{DF1}) 
%We now introduce regular solvable subalgebra of Lie(Aut($\mathbb{A}^2$)).
%\begin{Defn}
%{\rm	
%	A solvable subalgebra of Lie(Aut($\mathbb{A}^2$)) is called  {\it regular} if it is generated by
%	homogeneous derivations with respect to the standard $\mathbb{Z}^2$-grading.
%}		
%\end{Defn}
In this section, our goal is to provide a criterion for solvability of regular solvable subalgebras of Lie(Aut($\mathbb{A}^2$)) (\thref{GC4}).
First, we consider regular subalgebras generated by two homogeneous derivations in Subsection~\ref{RSST} and provide a necessary and sufficient condition for solvability (\thref{RSST0} and \thref{Th1}).
Next, in Subsection~\ref{GC}, we consider the general case.

%provide a criteria of solvability for regular subalgebras of Lie(Aut($\mathbb{A}^2$)) (\thref{GC2})
We now prove an easy lemma.
%some easy results (\ref{P1}, \ref{P11} and \ref{P2}) in this regard.
%Now we prove an easy lemma.
\begin{lem}\thlabel{P1}
	Let $(a,b),(c,d)\in \Z^2$ be such that $(a,b)\neq (0,0)$ and $(c,d)\neq (-1,-1)$.
	If $(c,d)\in \underline{\ell}^{(a,b)}$ then $(ra+c,rb+d)\in \ell^{(c,d)}=\underline{\ell}^{(a,b)}$, for all $r\in \ZY$.
	
	Conversely, if $(ra+c,rb+d)\in \ell^{(c,d)}$, for some $r\in \ZY$ then $(c,d)\in \underline{\ell}^{(a,b)}.$
\end{lem}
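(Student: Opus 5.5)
Everything here reduces to determinant identities. The defining equations give
$$(c,d)\in \underline{\ell}^{(a,b)} \iff a(d+1)-b(c+1)=0,$$
$$(p,q)\in \ell^{(c,d)} \iff (c+1)(q+1)-(d+1)(p+1)=0 .$$
Throughout I will use that $(c,d)\neq(-1,-1)$, so that $\ell^{(c,d)}$ is a genuine line through $(-1,-1)$ and $(c,d)$. Likewise $(a,b)\neq(0,0)$, so $\underline{\ell}^{(a,b)}$ is the line through $(-1,-1)$ with direction $(a,b)$.

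\textbf{Forward direction.} I will first show $\ell^{(c,d)}=\underline{\ell}^{(a,b)}$ when $(c,d)\in\underline{\ell}^{(a,b)}$. Both are lines through $(-1,-1)$, and both contain $(c,d)\neq(-1,-1)$, so they coincide. Alternatively, this is the middle equivalence of \thref{N3}, but the direct argument avoids the extra exclusion $(c,d)\neq(0,0)$ imposed there.

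Next I check that $(ra+c,rb+d)$ lies on this line for every $r\in\ZY$. Substituting into the equation of $\ell^{(c,d)}$ gives
$$(c+1)(rb+d+1)-(d+1)(ra+c+1)=r\bigl(b(c+1)-a(d+1)\bigr).$$
The bracket vanishes exactly because $(c,d)\in\underline{\ell}^{(a,b)}$. Geometrically, moving from a point of the line in the direction $(a,b)$ stays on the line.

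\textbf{Converse.} Suppose $(ra+c,rb+d)\in\ell^{(c,d)}$ for some $r\in\ZY$. The same identity gives $r\bigl(b(c+1)-a(d+1)\bigr)=0$. Since $r\geq 1$ and $\kk$ (indeed $\Z$) has no zero divisors, we get $a(d+1)-b(c+1)=0$, that is, $(c,d)\in\underline{\ell}^{(a,b)}$.

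There is no real obstacle here. The only care needed is the bookkeeping of sign conventions in the two determinants, and noting where the hypotheses $(a,b)\neq(0,0)$ and $(c,d)\neq(-1,-1)$ are used, namely to make both loci honest lines.
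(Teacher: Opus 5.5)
Your proof is correct and follows the paper's route: the paper's proof rests on exactly the identity $r\bigl(a(d+1)-b(c+1)\bigr)=(ra+c+1)(d+1)-(c+1)(rb+d+1)$, used in both directions with $r\neq 0$. Your direct justification of $\ell^{(c,d)}=\underline{\ell}^{(a,b)}$ (two lines through the distinct points $(-1,-1)$ and $(c,d)$) fills in a step the paper only asserts, and it correctly avoids the extra exclusions of \thref{N3}.
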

\begin{proof}
	Note that for any $r\in \ZY$, we have
	$\begin{array}{ccccc}
		r\begin{vmatrix}
			a & c+1\\
			b & d+1\\ 
		\end{vmatrix}
		&=&\begin{vmatrix}
			ra+c+1 & c+1\\
			rb+d+1 & d+1\\ 
		\end{vmatrix}.
	\end{array}$
	
	\noindent
	Now, if $(c,d)\in \underline{\ell}^{(a,b)}$ then
%	\begin{center}
	$\ell^{(c,d)}=\underline{\ell}^{(a,b)}$ and $\begin{vmatrix}
		a & c+1\\
		b & d+1\\ 
	\end{vmatrix}= 0.$
%	\end{center}
	Hence, $\begin{vmatrix}
		ra+c+1 & c+1\\
		rb+d+1 & d+1\\ 
	\end{vmatrix}~=~0,$ for all $r\in \ZY$.
	Therefore,  $(ra+c,rb+d)\in \ell^{(c,d)}=\underline{\ell}^{(a,b)},\, \forall r\in \ZY$.
	
	Next writing the arguments in reverse order, one gets the converse.	
\end{proof}
%The next corollary follows similarly.
%\begin{cor}\thlabel{P11}
%	Let $(a,b),(c,d)\in \Z^2$ be such that $(a,b)\neq (0,0)$ and $(c,d)\neq (-1,-1)$.
%	If $(c,d)\notin \underline{\ell}^{(a,b)}$ then $(ra+c,rb+d)\notin \ell^{(c,d)}$, for all $r\in \ZY$.
%	
%	Conversely, if $(ra+c,rb+d)\notin \ell^{(c,d)}$, for some $r\in \ZY$ then we have $(c,d)\notin \underline{\ell}^{(a,b)}.$	
%\end{cor}
Next we note down an observation in the form of a lemma.
\begin{lem}\thlabel{P2}
	Suppose $(a,b),(c,d)\in \mathcal{M}$ is such that $\{(a,b),(c,d)\}\neq \{(0,-1), (-1,0)\}$.\\
	If $(c,d)\in \underline{\ell}^{(a,b)}$ and $(a,b)\in \underline{\ell}^{(c,d)}$ then $(c,d)\in \ell^{(a,b)}$. 
\end{lem}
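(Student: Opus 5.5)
The proof is a direct computation with the three determinants; no earlier result beyond the definitions of the lines in Section~\ref{N} is needed. I will write out the two hypotheses and the desired conclusion as polynomial identities in $a,b,c,d$, combine the hypotheses, and use membership in $\mathcal{M}$ to rule out the one degenerate solution. That degenerate solution turns out to be exactly the excluded pair $\{(0,-1),(-1,0)\}$.

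\emph{Step 1: the equations.} By definition, $(c,d)\in\underline{\ell}^{(a,b)}$ means
\[
a(d+1)-b(c+1)=0, \qquad (\ast)
\]
and $(a,b)\in\underline{\ell}^{(c,d)}$ means
\[
c(b+1)-d(a+1)=0. \qquad (\ast\ast)
\]
The goal $(c,d)\in\ell^{(a,b)}$ is $(a+1)(d+1)-(b+1)(c+1)=0$. Expanding gives
\[
(a+1)(d+1)-(b+1)(c+1)=\big[a(d+1)-b(c+1)\big]+(d-c),
\]
so by $(\ast)$ it suffices to show $c=d$.

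\emph{Step 2: combine the hypotheses.} Adding $(\ast)$ and $(\ast\ast)$, the terms $ad$ and $bc$ cancel, leaving $a-b+c-d=0$, that is, $d=c+a-b$. Substituting this into $(\ast)$ gives
\[
a(c+a-b+1)-b(c+1)=0,
\]
which factors as $(a-b)(a+c+1)=0$. If $a=b$, then $d=c$ and we are done by Step~1.

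\emph{Step 3: the degenerate case.} The only remaining possibility is $a+c=-1$ with $a\neq b$, and this is the one place where $\mathcal{M}$ and the excluded pair enter. Every point of $\mathcal{M}$ has both coordinates at least $-1$, so $a+c=-1$ forces $\{a,c\}=\{-1,0\}$.

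Suppose $a=-1$ and $c=0$. Then $(a,b)=(-1,b)\in\mathcal{M}$ forces $b\geqslant 0$, while $d=c+a-b=-1-b\geqslant -1$ forces $b\leqslant 0$. Hence $b=0$ and $d=-1$, so $\{(a,b),(c,d)\}=\{(-1,0),(0,-1)\}$, which is excluded by hypothesis. The case $a=0$, $c=-1$ is symmetric and leads to the same excluded pair.

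Thus $c=d$ in every admissible case, and $(c,d)\in\ell^{(a,b)}$. I do not expect any step to be a real obstacle. The only care needed is in Step~3, where one must use the shape of $\mathcal{M}$ to exclude the branch $a+c=-1$.
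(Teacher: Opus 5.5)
Your proof is correct. It reaches the conclusion by a more explicit route than the paper.

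The paper argues with lines. Using \thref{N3}, it rewrites the second hypothesis as $(c,d)\in\widetilde{\ell}_{(a,b)}$, so $(c,d)$ lies on both $\underline{\ell}^{(a,b)}$ and $\widetilde{\ell}_{(a,b)}$. It then asserts, citing only the exclusion of $\{(0,-1),(-1,0)\}$, that $\underline{\ell}^{(a,b)}=\widetilde{\ell}_{(a,b)}=\ell^{(a,b)}$, and does not explain why the exclusion forces the two lines to coincide.

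Your computation is the coordinate version of intersecting those same two lines, and it supplies that missing justification. The factorization $(a-b)(a+c+1)=0$ splits into two cases:
\begin{itemize}
\item $a=b$: both lines are the diagonal, and the conclusion follows.
\item $a+c+1=0$: then $(c,d)=(-a-1,-b-1)$, the unique intersection point of the two non-parallel lines. Your case analysis using the shape of $\mathcal{M}$ shows this point is admissible only for the excluded pair.
\end{itemize}

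The paper's version is shorter and stays within the line-based language it uses throughout Section~\ref{RSS}. Yours is self-contained, needs no earlier remark, and shows that $\{(a,b),(c,d)\}\neq\{(0,-1),(-1,0)\}$ is precisely the hypothesis needed to exclude the degenerate intersection.
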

\begin{proof}
	Suppose $(c,d)\in \underline{\ell}^{(a,b)}$ and $(a,b)\in \underline{\ell}^{(c,d)}$.
	Then by \thref{N3}, we have
	$$(c,d)\in  \underline{\ell}^{(a,b)}\cap\widetilde{\ell}_{(a,b)}.$$
	Now $\{(a,b), (c,d)\}\neq \{(0,-1), (-1,0)\}$; hence, we have
	$\ell^{(c,d)}=\underline{\ell}^{(a,b)}=\widetilde{\ell}_{(a,b)}=\ell^{(a,b)}.$
	Therefore, $(c,d)\in \ell^{(a,b)}$.	
\end{proof}
\subsection{Solvability of two generated regular subalgebras}\label{RSST}

In this subsection, our goal is to provide a necessary and sufficient condition for solvability of regular subalgebras generated by two homogeneous derivations (\thref{RSST0} and \thref{Th1}).
%We first prove a preparatory lemma 
%in this regard.
%
%Consider $(a,b)\in \mathcal{M}$.
%Then by the commutation relation~\eqref{CR1} either Lie($\{(a,b), (0,0)\}$) is Abelian (when $a=b$) or it is isomorphic to the non-abelian Lie algebra of dimension~$2$ (when $a\neq b$).
%Therefore, Lie($\{(a,b), (0,0)\}$) is always solvable.
%Similarly, Lie($\{(a,b), \Delta\}$) is always solvable.
%In fact, it is Abelian if $a+b=0$ and non-abelian Lie algebra of dimension~$2$, otherwise.
At first, we consider the case when at least one homogeneous derivation has bidegree~$(0,0)$.
\begin{lem}\thlabel{RSST0}
For any $(a,b)\in \mathcal{M}$, Lie$(\{(a,b),(0,0)\})$ and Lie$(\{(a,b)\},\Delta)$	are solvable.
Moreover, they are either Abelian or solvable of derived length~$2$.
%if $a\neq b$ then Lie$(\{(a,b),(0,0)\})$ is non-Abelian and if $a+b\neq 0$ then Lie$(\{(a,b)\},\Delta)$ is non-Abelian.
%In the non-Abelian case, both Lie$(\{(a,b),(0,0)\})$ and Lie$(\{(a,b)\},\Delta)$ have derived length~$2$.
\end{lem}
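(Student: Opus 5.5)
The plan is to show that each of the two algebras is spanned by just its two generators, and then read off the derived series from one bracket. The tool is that $\partial_{(0,0)}$ and $\Delta$ act diagonally on every homogeneous derivation, by the commutation relations \eqref{CR1} and \eqref{CR3}.

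For Lie$(\{(a,b),(0,0)\})$, I will use \eqref{CR1} (as recorded just before \eqref{PE1}): $[\partial_{(0,0)},\partial_{(a,b)}]=(a-b)\partial_{(a,b)}$. Hence the span
\[
V=\kk\partial_{(0,0)}+\kk\partial_{(a,b)}
\]
is closed under the bracket, because the only nontrivial bracket of basis elements lands back in $\kk\partial_{(a,b)}$. So Lie$(\{(a,b),(0,0)\})=V$, which has dimension at most $2$. The degenerate case $(a,b)=(0,0)$ gives a one-dimensional algebra, which is trivially Abelian.

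From this the derived series follows at once:
\[
V^{(1)}=\kk\,(a-b)\partial_{(a,b)}.
\]
This is $0$ when $a=b$, so the algebra is Abelian. When $a\neq b$ it is the one-dimensional span of $\partial_{(a,b)}$, which is Abelian, so $V^{(2)}=0$ and the derived length is exactly $2$.

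The case Lie$(\{(a,b)\},\Delta)$ is identical. It uses \eqref{CR3}, $[\Delta,\partial_{(a,b)}]=(a+b)\partial_{(a,b)}$, so the span $\kk\Delta+\kk\partial_{(a,b)}$ is a subalgebra. It is Abelian exactly when $a+b=0$, and otherwise has derived length $2$, with derived algebra $\kk\partial_{(a,b)}$.

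The only step needing care is checking that the span is already closed, i.e.\ that no new elements are generated. This holds because both $\partial_{(0,0)}$ and $\Delta$ are ad-diagonal on homogeneous derivations, so there is no real obstacle.
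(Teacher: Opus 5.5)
Your proposal is correct and follows essentially the same route as the paper. Both arguments use the diagonal action given by \eqref{CR1} and \eqref{CR3} to see that each algebra is spanned by its two generators, and then read off that it is Abelian when $a=b$ (resp.\ $a+b=0$) and the two-dimensional non-abelian algebra of derived length $2$ otherwise.
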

\begin{proof}
Let $(a,b)\in \mathcal{M}$.
Then by the commutation relation~\eqref{CR1}, it follows that either Lie($\{(a,b), (0,0)\}$) is Abelian (when $a=b$) or it is isomorphic to the non-abelian Lie algebra of dimension~$2$ (when $a\neq b$).
Therefore, Lie($\{(a,b), (0,0)\}$) is either Abelian or solvable of derived length~$2$.

Next, by commutation relation~\eqref{CR3}, it follows that Lie($\{(a,b),\Delta\}$) is either Abelian (when $a+b=0$) or solvable of derived length~$2$ (when $a+b\neq 0$).	
\end{proof}
Next, we consider the case when both homogeneous derivations have non-zero bidegrees.
At first, we prove a criterion for non-solvability (\thref{LNS1}), then we prove a criterion for solvability (\thref{P6}) and at last, we join them to get the main result, \thref{Th1}.
Now, we prove two technical lemmas (\ref{P4} and \ref{P5}) as preparation for \thref{LNS1}.
%; which gives a criterion for non-solvability. 

\begin{lem}\thlabel{P4}
	For any $(a,b), (c,d)\in \mathcal{M}$ and $r,s\in \ZZ$ with $(a,b)\neq (0,0)$ and $r\neq s$ the following statements hold:
	\begin{enumerate}[{\rm(a)}]
		
		\item if $(c,d)\notin \underline{\ell}^{(a,b)}$ then
		$\partial_{((r+s)a+2c, (r+s)b+2d)}\in$
		Lie$(\{(ra+c,rb+d),(sa+c,sb+d)\})\, ;$					
		
		\item if $a+c\neq b+d$ then $\partial_{((r+s)(a+c), (r+s)(b+d))}\in$ Lie$(\{(r(a+c),r(b+d)),{(s(a+c),s(b+d))}\})$.
		
	\end{enumerate}
\end{lem}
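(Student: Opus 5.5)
Both parts will follow from \thref{P3}(b), applied to the two given bidegrees. \thref{P3}(b) says that if $(p,q)\notin \ell^{(u,v)}$, then $\partial_{(p+u,q+v)}$ lies in the subalgebra generated by $\partial_{(p,q)}$ and $\partial_{(u,v)}$. So in each part it is enough to show that the relevant $2\times 2$ determinant is non-zero. Here I read the statement, as it is implicitly meant, with the two generating bidegrees lying in $\mathcal{M}$, so that Lie$(\cdot)$ makes sense. The exceptional pair $\{(0,-1),(-1,0)\}$ needs no separate treatment: \thref{P3}(b) already covers it, since then $\partial_{(-1,-1)}=0$.

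For (a), put $(p,q)=(ra+c,rb+d)$ and $(u,v)=(sa+c,sb+d)$. By definition, $(p,q)\in\ell^{(u,v)}$ iff
$$\begin{vmatrix} sa+c+1 & ra+c+1\\ sb+d+1 & rb+d+1\end{vmatrix}=0 .$$
I will simplify this determinant by column operations, which is exactly the manipulation used in the proof of \thref{P1}. First subtract the second column from the first, which gives first column $((s-r)a,(s-r)b)$. Then factor out $s-r$ and subtract $r$ times the new first column from the second. The result is
$$(s-r)\begin{vmatrix} a & c+1\\ b & d+1\end{vmatrix}.$$
Since $r\neq s$ and $(c,d)\notin\underline{\ell}^{(a,b)}$, this is non-zero, so $(p,q)\notin\ell^{(u,v)}$. \thref{P3}(b) then yields $\partial_{(p+u,q+v)}=\partial_{((r+s)a+2c,(r+s)b+2d)}$ in the generated subalgebra. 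Equivalently, one can invoke the converse in \thref{P1} to reach the same non-collinearity conclusion.

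For (b), write $u=a+c$ and $v=b+d$. The determinant to compute is
$$\begin{vmatrix} su+1 & ru+1\\ sv+1 & rv+1\end{vmatrix}=(su+1)(rv+1)-(ru+1)(sv+1)=(s-r)(u-v).$$
This is non-zero because $r\neq s$ and $a+c\neq b+d$. Hence $(ru,rv)\notin\ell^{(su,sv)}$, and \thref{P3}(b) gives $\partial_{((r+s)u,(r+s)v)}$ in Lie$(\{(ru,rv),(su,sv)\})$.

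There is no real obstacle here. The only point needing care is the sign and column bookkeeping in (a), which has to reduce the determinant to a non-zero multiple of the determinant defining $\underline{\ell}^{(a,b)}$.
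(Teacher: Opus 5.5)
Your proposal is correct and follows the paper's proof. In both parts the bracket coefficient reduces to $(r-s)\begin{vmatrix} a & c+1\\ b & d+1\end{vmatrix}$, respectively $(r-s)\begin{vmatrix} 1 & a+c\\ 1 & b+d\end{vmatrix}$, up to sign, which is non-zero, and then \thref{P3}(b) is applied. The paper also checks explicitly that $(sa+c,sb+d)\neq(-1,-1)$, so that $\ell^{(sa+c,sb+d)}$ is defined. In your reading this is automatic, since you take the generating bidegrees to lie in $\mathcal{M}$.
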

\begin{proof}
	{\bf (a) : }
	Note that for any $r,s\in \ZZ$, we have
		\begin{equation}\label{P4E}
			(r-s)	
		\begin{vmatrix}
			a & c+1\\
			b & d+1\\ 
		\end{vmatrix} 
		=
		\begin{vmatrix}
			ra+c+1 & sa+c+1\\
			rb+d+1 & sb+d+1\\ 
		\end{vmatrix}.
		\end{equation}
	
	Let us assume that $(c,d)\notin \underline{\ell}^{(a,b)}$.
	Then $(c,d)\neq (-1,-1).$
	Hence, it follows that $(sa+c,sb+d)\neq (-1,-1)$, for all $s\in \ZZ$.
	Next for $r\neq s$, the determinant on the L.H.S of \eqref{P4E} is non-zero.
	Hence, the determinant on the R.H.S of \eqref{P4E} is also non-zero.
	Therefore, we have $$(ra+c,rb+d)\notin \ell^{(sa+c,sb+d)},\, r\neq s.$$
	Next by \thref{P3}(b), we can conclude that 
	\begin{center}
		$\partial_{((r+s)a+2c, (r+s)b+2d)}\in$
		Lie$(\{(ra+c,rb+d),(sa+c,sb+d)\})$, for $r\neq s$.
	\end{center}
	\smallskip
	\noindent
	{\bf (b) : }
	Note that for any $r,s\in \ZZ$, we have
	$
	%	\begin{array}{lllll}
		(r-s)
		\begin{vmatrix}
			1 & a+c\\
			1 & b+d\\ 
		\end{vmatrix}
		=
		\begin{vmatrix}
			ra+rc+1 & sa+sc+1\\
			rb+rd+1 & sb+sd+1\\ 
		\end{vmatrix}.
	$
	
	Now following the proof of statement (a), we can conclude that
	if $a+c\neq b+d$ and $r\neq s$ then $(s(a+c),s(b+d))\neq (-1,-1)$, for all $s\in \ZZ$ and  
	\begin{center}
		$(r(a+c),r(b+d))\notin \ell^{(s(a+c),s(b+d))},$ for $r\neq s$.
	\end{center}
	Now by \thref{P3}(b), the result follows.
\end{proof}
			
\begin{lem}\thlabel{P5}
	Let $(a,b),(c,d)\in \mathcal{M}$ be such that $c\neq d$, $a+c\neq -1$ and 
	$a+c=b+d$.
	Then for any $r,s\in \ZY$, the following statements hold:
	\begin{enumerate}[\rm(a)]
		\item 
		$\partial_{((r+s)(a+c)+c,(r+s)(b+d)+d)}\in$ Lie$(\{(r(a+c)+c,r(b+d)+d), (s(a+c),s(b+d))\})$\\ and
		$\partial_{((r+s)(a+c)+a,(r+s)(b+d)+b)}
		\in$ Lie$(\{(r(a+c)+a, r(b+d)+b),(s(a+c),s(b+d))\})\, ;$
		
		\item if
		$(sa+(s+1)c, sb+(s+1)d)\in \ell^ {((s+1)a+sc, (s+1)b+sd)}$  then for any $r\neq s$, we have\\
		$\partial_{((2r+1)(a+c), (2r+1)(b+d))}\in$ Lie$(\{((r+1)a+rc, (r+1)b+rd),ra+(r+1)c, rb+(r+1)d\})$.
	\end{enumerate}
\end{lem}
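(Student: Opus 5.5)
The plan is to derive every membership from \thref{P3}(b). It gives $\partial_{(p+p',q+q')}\in$ Lie$(\{(p,q),(p',q')\})$ as soon as
$\begin{vmatrix} p'+1 & p+1\\ q'+1 & q+1\end{vmatrix}\neq 0$. So in each case I only need to compute one $2\times 2$ determinant and show it is nonzero. Throughout I put $t:=a+c=b+d$ and $e:=c-d$. From $a+c=b+d$ we get $a-b=d-c=-e$, and the hypotheses give $e\neq 0$ and $t\neq -1$.

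\textbf{Part (a).} Here the second generator is $(st,st)$. For the first membership I pair it with $(rt+c,rt+d)$. The determinant is
$$(rt+c+1)(st+1)-(rt+d+1)(st+1)=(st+1)(c-d)=(st+1)e.$$
Since $s\in\ZY$ and $t\in\Z$, the equality $st=-1$ would force $s=1$ and $t=-1$, which is excluded. Hence $st+1\neq 0$, and $e\neq 0$, so the determinant is nonzero. \thref{P3}(b) then gives $\partial_{((r+s)t+c,(r+s)t+d)}$, which is the first claim. The second membership is the same computation with $(a,b)$ in place of $(c,d)$. It produces the determinant $(st+1)(a-b)=-(st+1)e$, which is again nonzero.

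\textbf{Part (b).} Write $A_r=(rt+a,rt+b)$ and $B_r=(rt+c,rt+d)$. These are exactly the two generators, and $A_r+B_r=((2r+1)t,(2r+1)t)$. The determinant of $A_r+(1,1)$ and $B_r+(1,1)$ is
$$(rt+a+1)(rt+d+1)-(rt+b+1)(rt+c+1).$$
The $r^2t^2$ terms cancel. The linear term is $rt(a+d-b-c)=-2rte$. For the constant term I substitute $b=a+e$ and $d=c-e$, which gives $(a+1)(c-e+1)-(a+e+1)(c+1)=-e(t+2)$. Altogether the determinant equals
$$-e\bigl((2r+1)t+2\bigr).$$
By the same computation at $r=s$, the hypothesis $A_s\in\ell^{B_s}$ means exactly $e((2s+1)t+2)=0$. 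Since $e\neq 0$, this gives $(2s+1)t=-2$, so $t\neq 0$. Then for $r\neq s$ we get $(2r+1)t\neq(2s+1)t=-2$, and the determinant for $r$ is nonzero. \thref{P3}(b) now yields $\partial_{((2r+1)t,(2r+1)t)}$ in the subalgebra generated by $A_r$ and $B_r$.

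The only step needing care is the algebraic simplification of the determinant in (b) into $-e((2r+1)t+2)$. Once this factorization is in hand, the argument is just the observation that $r\mapsto (2r+1)t$ is injective when $t\neq 0$. The rest is a direct application of \thref{P3}(b); that result already covers the degenerate case where the generators are $(0,-1)$ and $(-1,0)$.
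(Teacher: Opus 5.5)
Your proof is correct. It shares the paper's overall strategy: each membership is reduced to \thref{P3}(b) by showing that the relevant $2\times 2$ determinant is nonzero. What differs is how you establish the nonvanishing.

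The paper never reduces the determinants to a single factor. In (a) it expands them via \eqref{E9} as combinations of $ad-bc$ and $d-c$, and notes that the $c$-version and the $a$-version are negatives of each other. It then argues by contradiction. It combines \eqref{E10} with a second relation \eqref{E11}, which comes from both points lying on a common line through $(-1,-1)$, and this forces $c=d$. In (b) it subtracts the two relations \eqref{E12} and \eqref{E13}, taken at $s$ and at $r$.

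Your computation implicitly uses the identity $ad-bc=(a+c)(d-c)$, valid when $a+c=b+d$. This collapses each determinant to a multiple of $c-d$. In your notation $t=a+c$, $e=c-d$, the determinant is $\pm(st+1)e$ in (a) and $-e\bigl((2r+1)t+2\bigr)$ in (b). So (a) follows in one line from $a+c\neq -1$, and (b) from the injectivity of $r\mapsto(2r+1)t$, with no case analysis.

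The factorization also exposes something the paper's argument hides. The hypothesis of (b) forces $(2s+1)t=-2$. This has no solution with $s\in\ZY$ and $t\in\Z$, because $2s+1\geqslant 3$ is odd and so cannot divide $2$. Hence (b) is vacuously true, and Case~II(b) in the proof of \thref{LNS1} never occurs.

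One trivial slip: the hypothesis reads $B_s\in\ell^{A_s}$, not $A_s\in\ell^{B_s}$. This is harmless, since the vanishing condition is symmetric. Both points also differ from $(-1,-1)$, because $a\neq b$ and $c\neq d$.
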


\begin{proof}
	Since $a+c=b+d$, we have
	\begin{equation}\label{E8}
		0=
		\begin{vmatrix}
			1 & a+c\\
			1 & b+d\\ 
		\end{vmatrix}= 
		\begin{vmatrix}
			1 & c\\
			1 & d
		\end{vmatrix}-
		\begin{vmatrix}
			a & 1\\
			b & 1
		\end{vmatrix}.
	\end{equation}
	{\bf (a) : }
	Note that for any $r,s\in \ZY$,
	\begin{equation}\label{E9}
		\begin{vmatrix}
			s(a+c)+1 & r(a+c)+c+1\\
			s(b+d)+1 & r(b+d)+d+1
		\end{vmatrix}=
		s\begin{vmatrix}
			a&c\\
			b&d
		\end{vmatrix}
		+(s-r)\begin{vmatrix}
			a & 1\\
			b & 1
		\end{vmatrix}
		+(r+1-s)\begin{vmatrix}
			1 & c\\
			1 & d
		\end{vmatrix}
	\end{equation} and 
	\begin{equation*}
		\begin{vmatrix}
			s(a+c)+1 & r(a+c)+a+1\\
			s(b+d)+1 & r(b+d)+b+1
		\end{vmatrix}=
		-s\begin{vmatrix}
			a&c\\
			b&d
		\end{vmatrix}
		+(s-r-1)\begin{vmatrix}
			a & 1\\
			b & 1
		\end{vmatrix}
		+(r-s)\begin{vmatrix}
			1 & c\\
			1 & d
		\end{vmatrix}.
	\end{equation*}
	Therefore, by (\ref{E8}), we have
	\begin{equation*}
		\begin{vmatrix}
			s(a+c)+1 & r(a+c)+c+1\\
			s(b+d)+1 & r(b+d)+d+1
		\end{vmatrix}=-	\begin{vmatrix}
			s(a+c)+1 & r(a+c)+a+1\\
			s(b+d)+1 & r(b+d)+b+1
		\end{vmatrix}
	\end{equation*}
	Now $a+c=b+d\neq -1$ and $s\in \ZY$; hence, $(sa+sc,sb+sd)\neq (-1,-1)$.
	Thus,
	$$(r(a+c)+c,r(a+c)+d)\in \ell^{(sa+sc,sb+sd)} \iff (r(a+c)+a, r(a+c)+b)\in~ \ell^{(sa+sc,sb+sd)}.$$
	
	Suppose, if possible, that $(r(a+c)+c,r(a+c)+d)\in \ell^{(sa+sc,sb+sd)}$.
	Then the determinant on L.H.S of (\ref{E9}) is zero and hence by (\ref{E8}), we have 
	\begin{equation}\label{E10}
		s\begin{vmatrix}
			a&c\\
			b&d
		\end{vmatrix}
		+\begin{vmatrix}
			1 & c\\
			1 & d
		\end{vmatrix}=0.
	\end{equation}
	\noindent
	Now we also have $ (r(a+c)+a, r(a+c)+b)\in~ \ell^{(sa+sc,sb+sd)}$.
%	By assumption $c\neq d$;
	Hence, we can conclude that
	$(r(a+c)+a, r(a+c)+b)\in~ \ell^{(r(a+c)+c,r(a+c)+d)}$.
	Therefore,
	\begin{equation}\label{E2}
		\begin{vmatrix}
			r(a+c)+c+1 & r(a+c)+a+1 \\
			r(a+c)+d+1 & r(a+c)+b+1 
		\end{vmatrix}=0.
	\end{equation}
	Next, by equations \eqref{E8} and \eqref{E2}, we have
	\begin{equation}\label{E11}
		(2r+1)\begin{vmatrix}
			a & c\\
			b & d
		\end{vmatrix}+2
		\begin{vmatrix}
			1 & c\\
			1 & d
		\end{vmatrix}=0.
	\end{equation} 
	Therefore, from (\ref{E10}) and (\ref{E11}), we have $\begin{vmatrix}
		1 & c\\
		1 & d
	\end{vmatrix}= 0$, i.e., $c=d$, a contradiction. 
	Hence, $$(r(a+c)+c,r(a+c)+d), (r(a+c)+a, r(a+c)+b)\notin \ell^{(sa+sc,sb+sd)}.$$
	Now, by \thref{P3}(b)
	the result follows.
	
	\smallskip
	\noindent
	{\bf (b) : }
	Note that for any $s\in \ZY$,
	\begin{center}
		$(sa+(s+1)c, sb+(s+1)d)\neq (-1,-1)$ and $(s+1)a+sc, (s+1)b+sd\neq (-1,-1)$.
	\end{center}
	By assumption $(sa+(s+1)c, sb+(s+1)d)\in \ell^ {((s+1)a+sc, (s+1)b+sd)}$;
	hence, we have
	\begin{equation}\label{E121}
		\begin{vmatrix}
			sa+(s+1)c+1 & (s+1)a+sc+1 \\
			sb+(s+1)d+1 & (s+1)b+sd+1 
		\end{vmatrix}=0.
	\end{equation}
	Therefore, by \eqref{E8} and \eqref{E121}, we can conclude that 
		\begin{equation}\label{E12}
		(2s+1)
		\begin{vmatrix}
			a & c\\
			b & d
		\end{vmatrix}+2
		\begin{vmatrix}
			1 & c\\
			1 & d
		\end{vmatrix}=0.
	\end{equation}
	Suppose, if possible, that $ (ra+(r+1)c,rb+(r+1)d)\in \ell^{((r+1)a+rc,(r+1)b+rd)}$ for some $r\neq s$.
	Then, by \eqref{E8} we have 
	\begin{equation}\label{E13}
%		0=\begin{vmatrix}
%			ra+(r+1)c+1 & (r+1)a+rc+1 \\
%			rb+(r+1)d+1 & (r+1)b+rd+1 
%		\end{vmatrix}=-
        (2r+1)
		\begin{vmatrix}
			a & c\\
			b & d
		\end{vmatrix}+2
		\begin{vmatrix}
			1 & c\\
			1 & d
		\end{vmatrix}=0.
	\end{equation}
	Therefore, by (\ref{E12}) and (\ref{E13}), we have
	$\begin{vmatrix}
		1 & c\\
		1 & d
	\end{vmatrix} =0$, i.e., $c=d$,	a contradiction.
	Hence,
	$$ (ra+(r+1)c,rb+(r+1)d)\notin \ell^{((r+1)a+rc,(r+1)b+rd)}$$
	and now by \thref{P3}(b), the result follows.
\end{proof}

Our next result
%we consider $(a,b), (c,d)\in \mathcal{M}\setminus\{(0,0)\}$ and
provides a criterion for non-solvability of Lie$(\{(a,b),(c,d)\})$.
It is a key lemma of this paper.
\begin{lem}\thlabel{LNS1}
	For any $(a,b),(c,d)\in \mathcal{M}\setminus \{(0,0)\}$, if $(a,b)\not\in \ell^{(c,d)}$, $(a,b)\notin \underline{\ell}^{(c,d)}$ and  $(c,d)\notin \underline{\ell}^{(a,b)}$ then
	Lie$(\{(a,b),(c,d)\})$ is non-solvable.	
\end{lem}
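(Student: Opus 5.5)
The plan is to combine \thref{ThDL} with explicit bracket computations. By \thref{ThDL}, every solvable subalgebra of Der$(\kk[x,y])$ has derived length at most $4$. So it suffices to show that $\mathfrak{L}^{(4)}\neq 0$ for $\mathfrak{L}=$ Lie$(\{(a,b),(c,d)\})$; more robustly, that every $\mathfrak{L}^{(k)}$ contains some $\partial_{(p,q)}\neq 0$. Throughout I will use that each $\mathfrak{L}^{(k)}$ is an ideal of $\mathfrak{L}$ and is spanned by homogeneous elements. I will also use \thref{P3}(b) as the basic tool: whenever two bidegrees $u,v\in\mathcal{M}$ in $\mathfrak{L}^{(k)}$ satisfy $u\notin\ell^{v}$, we get $\partial_{u+v}\in\mathfrak{L}^{(k+1)}$. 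Finally, the three hypotheses are invariant under swapping $(a,b)$ and $(c,d)$, so the argument can be run symmetrically.

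\textbf{First derived algebra.} Since $(a,b)\notin\ell^{(c,d)}$, \thref{P3}(b) gives $\partial_{(a+c,b+d)}\in\mathfrak{L}^{(1)}$. Bracketing repeatedly with $\partial_{(a,b)}$, which is allowed since $\mathfrak{L}^{(1)}$ is an ideal, should give $\partial_{(ra+c,rb+d)}\in\mathfrak{L}^{(1)}$ for $r\geq 1$. Symmetrically we get $\partial_{(a+sc,b+sd)}\in\mathfrak{L}^{(1)}$. The obstruction at each step is whether $(a,b)\in\ell^{(ra+c,rb+d)}$. The relevant determinant is affine in $r$: it equals $|\,(a+1,b+1),(c+1,d+1)\,|$ plus $r$ times a constant. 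Its constant term is nonzero because $(a,b)\notin\ell^{(c,d)}$, so it vanishes for at most one $r$. If needed, I will avoid that value by instead bracketing with $\partial_{(c,d)}$, or by using both families. This should yield an infinite supply of bidegrees $ra+c$ (and $a+sc$) in $\mathfrak{L}^{(1)}$, up to finitely many exceptions.

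\textbf{Iterating up the derived series.} Two members of the family $\{(ra+c,rb+d)\}$ with $r\neq s$ lie in $\mathfrak{L}^{(1)}$. Because $(c,d)\notin\underline{\ell}^{(a,b)}$, \thref{P4}(a) gives $\partial_{((r+s)a+2c,(r+s)b+2d)}\in\mathfrak{L}^{(2)}$ for all admissible $r\neq s$. This family again has the shape "arithmetic progression in direction $(a,b)$ from the base point $2c$". To iterate, I will check that the analogue of the non-membership $2(c,d)\notin\underline{\ell}^{(a,b)}$ persists. This holds because the determinant $\left|\begin{smallmatrix} a & 2c+1\\ b&2d+1\end{smallmatrix}\right|$ equals $2\left|\begin{smallmatrix} a& c+1\\ b& d+1\end{smallmatrix}\right|-\left|\begin{smallmatrix} a&1\\ b&1\end{smallmatrix}\right|$. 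In the degenerate situation where this vanishes, which forces relations like $a+c=b+d$, I will switch to the diagonal families. There, \thref{P4}(b) and \thref{P5} are designed exactly to produce $\partial_{(t(a+c),t(b+d))}$ and its shifts by $(a,b)$ or $(c,d)$ at each level. This also requires $c\neq d$ or $a\neq b$, and the hypothesis $(a,b)\notin\underline{\ell}^{(c,d)}$ is what should supply the other variant. Repeating this four times gives a nonzero element of $\mathfrak{L}^{(4)}$, hence non-solvability.

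\textbf{Main obstacle.} The hard part is the case bookkeeping. At each level one must ensure that the two chosen bidegrees are not on a common line through $(-1,-1)$, and that the new base point stays off the parallel line $\underline{\ell}$. The generic case is handled by \thref{P4}(a). I expect the delicate case to be $a+c=b+d$, where the natural families collapse onto the diagonal $\ell_{(1,1)}$. There I would first try \thref{P5}(a) to alternate between diagonal points $s(a+c)$ and shifted points $r(a+c)+c$. If a single exceptional index $s$ makes the hypothesis of \thref{P5}(b) hold, I would use \thref{P5}(b) to choose $r\neq s$. I would also need to rule out the special pair $\{(0,-1),(-1,0)\}$. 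This pair is excluded because it lies on $\ell^{(0,-1)}$, which contradicts the hypotheses.
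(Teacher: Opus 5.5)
Your frame matches the paper's: bound the derived length by \thref{ThDL}, then exhibit a nonzero element of $\mathfrak{L}^{(4)}$ via \thref{P3}(b), \thref{P4} and \thref{P5}, splitting at $a+c=b+d$. There is, however, a genuine gap. The mechanism you actually commit to fails, and the case analysis that is the real content of the lemma is only promised.

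\textbf{Where your iteration breaks.} Iterating \thref{P4}(a) along the direction $(a,b)$ from base points $(2^kc,2^kd)$ requires $(2^kc,2^kd)\notin\underline{\ell}^{(a,b)}$ at each level. That determinant equals $2^kD-(2^k-1)(a-b)$, where $D$ is the determinant expressing $(c,d)\notin\underline{\ell}^{(a,b)}$. It can vanish without $a+c=b+d$. For $(a,b)=(3,1)$, $(c,d)=(4,1)$ all three hypotheses hold and $a+c\neq b+d$, yet $(8,2)\in\underline{\ell}^{(3,1)}$. By \thref{P1} and \thref{P3}(a), your whole level-$2$ family $\{(3t+8,t+2)\}$ then lies on $\ell^{(8,2)}$ and is abelian, so brackets within it give nothing in $\mathfrak{L}^{(3)}$. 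In the case $a+c=b+d$ that you expect degeneracy to force, \thref{P4}(b) is unavailable, since it needs $a+c\neq b+d$. So everything there rests on \thref{P5}, which you only say you ``would try''.

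The ``infinite supply'' claim is also false. Once the affine determinant vanishes at some $r_0$, your chain is cut off for all $r>r_0$, not just at one value. For example, with $(a,b)=(-1,1)$, $(c,d)=(1,0)$ the two chains stop at $(-1,2)$ and $(3,1)$.

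\textbf{The missing idea.} Spend each $\underline{\ell}$-hypothesis once, to place two distinct multiples of $v=(a+c,b+d)$ in $\mathfrak{L}^{(1)}$:
\begin{enumerate}[\rm(i)]
\item $\partial_v$ comes from \thref{P3}(b);
\item $\partial_{(a+2c,b+2d)}$ comes from \thref{P4}(a), using $(c,d)\notin\underline{\ell}^{(a,b)}$;
\item $\partial_{2v}$ comes from \thref{P4}(a) applied to $\partial_{(a,b)}$ and $\partial_{(a+2c,b+2d)}$, using $(a,b)\notin\underline{\ell}^{(c,d)}$.
\end{enumerate}
If $a+c\neq b+d$, the structure constant of $[\partial_{rv},\partial_{sv}]$ is $\pm(r-s)\bigl((b+d)-(a+c)\bigr)\neq 0$ for all $r\neq s$. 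So \thref{P4}(b) climbs all four levels with no new degeneracy to check: $\{1,2\}\to 3$; then $\{1,3\},\{3,2\}\to 4,5$; then $\{4,3\},\{3,5\}\to 7,8$; then $\{7,8\}\to 15$.

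Only when $a+c=b+d$ do these multiples commute. Then one uses $\partial_{(a+2c,b+2d)},\partial_{(2a+c,2b+d)}\in\mathfrak{L}^{(1)}$ and notes $a\neq b$, $c\neq d$, $a+c\geqslant 0$. One then splits on whether $(2a+3c,2b+3d)\in\ell^{(3a+2c,3b+2d)}$. \thref{P5}(a) drives one branch, and \thref{P5}(b) together with \thref{P5}(a) drives the other, as in the paper. Without this explicit case work, your argument does not establish $\mathfrak{L}^{(4)}\neq 0$.
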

\begin{proof}
	Let $\mathfrak{L}$ denote the subalgebra Lie$(\{(a,b),(c,d)\})$.
	We will show that $\mathfrak{L}^{(4)}\neq 0$ and hence by \thref{ThDL} it will follow that $\mathfrak{L}$ is non-solvable.
	
	By assumption $(a,b)\notin \ell^{(c,d)}$; hence,  by \thref{P3}(b), we have 
	\begin{equation}\label{F1}
		\partial_{(a+c,b+d)}\in~ \mathfrak{L}^{(1)}.
	\end{equation}
	Now $(c,d)\notin \underline{\ell}^{(a,b)}$ and $\partial_{(c,d)},\partial_{(a+c,b+d)}\in \mathfrak{L}$; hence, by \thref{P4}(a),
	we have 
	\begin{equation}\label{F2}
		\partial_{(a+2c,b+2d)}\in \mathfrak{L}^{(1)}.
	\end{equation}
	Similarly, with the help of $(a,b)\notin \underline{\ell}^{(c,d)}$, we can conclude that 
	\begin{equation}\label{F3}
		\partial_{(2a+c,2b+d)}\in \mathfrak{L}^{(1)}.
	\end{equation}
	Next $(a,b)\notin \underline{\ell}^{(c,d)}$ and $\partial_{(a,b)},\partial_{(a+2c,b+2d)}\in \mathfrak{L}$; hence, by \thref{P4}(a), we have 
	\begin{equation}
		\partial_{(2a+2c,2b+2d)}\in\mathfrak{L}^{(1)}.
	\end{equation}
	Now we divide the proof in two cases and show that in each one of them 
	$\mathfrak{L}^{(4)}\neq 0$.\\
	
	\noindent
	{\bf Case I : } Suppose $a+c\neq b+d$.\\
	\noindent
	Then applying \thref{P4}(b) for 
	$\partial_{(a+c,b+d)}, \partial_{(2a+2c,2b+2d)}\in\mathfrak{L}^{(1)}$, we get
	$$\partial_{(3a+3c,3b+3d)}\in ~\mathfrak{L}^{(2)}.$$
	Next by applying  \thref{P4}(b), first for $\partial_{(a+c,b+d)}, \partial_{(3a+3c,3b+3d)}, \partial_{(2a+2c,2b+2d)} \in\mathfrak{L}^{(1)}$ \\consecutively, we get
	$$\partial_{(4a+4c,4b+4d)}, \partial_{(5a+5c,5b+5d)}\in~\mathfrak{L}^{(2)},$$
	and then, for $\partial_{(4a+4c,4b+4d)},\partial_{(3a+3c,3b+3d)}, \partial_{(5a+5c,5b+5d)}\in~\mathfrak{L}^{(2)}$ consecutively, we get $$\partial_{(7a+7c,7b+7d)},\partial_{(8a+8c,8b+8d)}\in \mathfrak{L}^{(3)},$$
	and at last, for $\partial_{(7a+7c,7b+7d)},\partial_{(8a+8c,8b+8d)}\in \mathfrak{L}^{(3)},$ we have
	$$\partial_{(15a+15c,15b+15d)}\in \mathfrak{L}^{(4)}.$$
	
	\noindent
	{\bf Case II : } Suppose $a+c=b+d$.
	
	\noindent
	Note that $(a,b)\notin \underline{\ell}^{(c,d)}$ hence $\{(a,b),(c,d)\}\neq \{(0,-1), (-1,0)\}$.
	Therefore, $a+c\geqslant 0$.
	Now $(a,b)\notin \underline{\ell}^{(c,d)}$; therefore,  either $a\neq b$ or $c\neq d$.
	By the condition $a+c=b+d$, we have both
		$a\neq b$ and $c\neq d$.	
	Next  $\partial_{(a+c,b+d)},\partial_{(a+2c,b+2d)}\in \mathfrak{L}^{(1)}$ (cf. \eqref{F1} and \eqref{F2}); hence, by \thref{P4}(a), we have $$\partial_{(2a+3c,2b+3d)}\in \mathfrak{L}^{(2)}.$$
	Similarly, with the help of \eqref{F3} instead of \eqref{F2}, we can conclude that $\partial_{(3a+2c,3b+2d)}\in \mathfrak{L}^{(2)}.$
	
	We again split the proof in two cases and show that in each one of them 
	$\mathfrak{L}^{(4)}\neq 0$.
	
	\smallskip
	\noindent
	{\bf Case II(a) : }Suppose $(2a+3c,2b+3d)\notin \ell^ {(3a+2c,3b+2d)}$.
	
	\noindent
	Then by \thref{P3}(b) and $\partial_{(2a+3c,2b+3d)},\partial_{(3a+2c,3b+2d)}\in \mathfrak{L}^{(2)}$, we have $$\partial_{(5a+5c,5b+5d)}\in\mathfrak{L}^{(3)}.$$
	Now by applying \thref{P5}(a), first for $\partial_{(2a+3c,2b+3d)},\partial_{(5a+5c,5b+5d)}\in \mathfrak{L}^{(2)}$, we have 
	$$\partial_{(7a+8b,7c+8d)}\in \mathfrak{L}^{(3)},$$
	and then for $\partial_{(7a+8b,7c+8d)},\partial_{(5a+5c,5b+5d)}\in \mathfrak{L}^{(3)},$ we have  $$\partial_{(12a+13c,12b+13d)}\in \mathfrak{L}^{(4)}.$$

	\smallskip
	\noindent
	{\bf Case II(b) : }Suppose $(2a+3c,2b+3d)\in \ell^ {(3a+2c,3b+2d)}$.\\
	Then by \thref{P5}(b) and $\partial_{(a+2c,b+2d)},\partial_{(2a+c,2b+d)}\in \mathfrak{L}^{(1)}$ (cf. \eqref{F2} and \eqref{F3}),
	we have
	$$\partial_{(3a+3c,3b+3d)}\in \mathfrak{L}^{(2)}.$$ 
	Next by
	applying \thref{P5}(a), first for $\partial_{(a+2c,b+2d)},\partial_{(2a+c,2b+d)},\partial_{(3a+3c,3b+3d)}\in \mathfrak{L}^{(1)}$, we have 
	$$\partial_{(4a+5c,4b+5d)}, \partial_{(5a+4c,5b+4d)}\in \mathfrak{L}^{(2)},$$
	and then, for $\partial_{(4a+5c,4b+5d)}, \partial_{(5a+4c,5b+4d)}, \partial_{(3a+3c,3b+3d)}\in \mathfrak{L}^{(2)}$, we have
	$$\partial_{(7a+8c,7b+8d)},\partial_{(8a+7c,8b+7d)}\in \mathfrak{L}^{(3)}.$$
	Note that in this case $(2a+3c,2b+3d)\in \ell^ {(3a+2c,3b+2d)}$; hence, from \thref{P5}(b) and $\partial_{(7a+8c,7b+8d)},\partial_{(8a+7c,8b+7d)}\in \mathfrak{L}^{(3)}$,  we have $$\partial_{(15a+15c,15b+15d)}\in \mathfrak{L}^{(4)}.$$
\end{proof}

Now we prove a result concerning solvability of Lie($\{(a,b),(c,d)\}$).

\begin{lem}\thlabel{P6}
	Suppose $(a,b),(c,d)\in \mathcal{M}$ is such that
	$\{(a,b), (c,d)\}\neq \{(0,-1),(-1,0)\}$, $(a,b)\neq (0,0)$, $(a,b)\notin \ell^{(c,d)}$ and $(c,d)\in \underline{\ell}^{(a,b)}$.
	Then Lie$(\{(a,b),(c,d)\})$ is solvable of derived length $2$.
	
	Moreover,
	let $\mathcal{S}\subseteq\mathcal{M}$ be such that 
%\begin{center}
%	
%	Lie$(\{(a,b),(c,d)\})$ = Lie$(\mathcal{S})$ and
	Lie$(\{(a,b),(c,d)\})^{(1)}$ = Lie$(\mathcal{S})$.
%	\end{center} 
	 Then
	 \begin{center}
	 	$\mathcal{S}= \{((r+1)a+c,(r+1)b+d)\in \mathcal{M}\mid r\in \ZZ\}\subseteq \ell^{(c,d)}=\underline{\ell}^{(a,b)}$
	 \end{center}
	 and
	\begin{center}
		Lie$(\{(a,b),(c,d)\})$ = $\overunderset{}{(e,f)\in \mathcal{S}}{\oplus}\kk\partial_{(e,f)}\oplus \kk\partial_{(c,d)}\oplus \kk\partial_{(a,b)}$.
	\end{center}
%	\begin{center}
%		$\mathcal{S}\setminus\{(a,b)\}\subseteq\{(ra+c,rb+d)\in \mathcal{M}\mid r\in \ZZ\}\subseteq \ell^{(c,d)}=\underline{\ell}^{(a,b)}$
%	\end{center}
%	and 
	
%	
%	
%		 where the summation runs over all $r\in \ZZ$, such that $(ra+c,rb+d)\in \mathcal{M}$. 
%	Moreover, 
%Lie$(\{(a,b),(c,d)\})$ is solvable of derived length $2$.	
\end{lem}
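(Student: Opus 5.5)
The plan is to compute the subalgebra explicitly. Put $p_r:=(ra+c,rb+d)$ for $r\in\ZZ$, so $p_0=(c,d)$. Since $(c,d)\in\underline{\ell}^{(a,b)}$ and $(c,d)\neq(-1,-1)$, \thref{P1} gives $p_r\in\ell^{(c,d)}=\underline{\ell}^{(a,b)}$ for all $r\in\ZY$. Consequently, by \thref{P3}(a) (or by $\partial_{(-1,-1)}=0$ whenever a point leaves $\mathcal{M}$), all $\partial_{p_r}$ with $p_r\in\mathcal{M}$ pairwise commute. I will show that
\[
V:=\bigoplus_{r\geqslant 1,\ p_r\in\mathcal{M}}\kk\partial_{p_r}\oplus\kk\partial_{(c,d)}\oplus\kk\partial_{(a,b)}
\]
is a subalgebra containing both generators. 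I will then show that it equals Lie$(\{(a,b),(c,d)\})$ and that its derived algebra is the abelian ideal spanned by the $\partial_{p_r}$, $r\geqslant1$.

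\textbf{Closure.} By \eqref{CR1}, $[\partial_{(a,b)},\partial_{p_r}]$ is a multiple of $\partial_{p_{r+1}}$, with coefficient
\[
\begin{vmatrix} ra+c+1 & a+1\\ rb+d+1 & b+1\end{vmatrix}.
\]
The brackets $[\partial_{p_r},\partial_{p_s}]$ vanish as noted above. So $V$ is closed, and Lie$(\{(a,b),(c,d)\})\subseteq V$.

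\textbf{Spanning.} For the reverse inclusion I need these coefficients to be nonzero whenever $p_{r+1}\in\mathcal{M}$. Using $a(d+1)=b(c+1)$, which is the condition $(c,d)\in\underline{\ell}^{(a,b)}$, the coefficient simplifies to
\[
\begin{vmatrix} c+1 & a+1\\ d+1 & b+1\end{vmatrix}+r(a-b).
\]
The first term is nonzero because $(a,b)\notin\ell^{(c,d)}$. Expanding $a(d+1)=b(c+1)$ shows the first term equals $c-d+a-b$. Hence the coefficient equals $(c-d)+(r+1)(a-b)$, which is the bracket coefficient of $(a,b)$ against a point on $\underline{\ell}^{(a,b)}$. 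I will verify nonvanishing by a short case analysis on the sign of $a-b$, combined with the fact that $p_{r+1}\in\mathcal{M}$ forces the coordinates to stay $\geqslant -1$. If $a=b$, the coefficient is the constant $c-d\neq0$, so there is nothing to check. This step, showing the coefficient cannot vanish exactly at some index with $p_{r+1}\in\mathcal{M}$, is the main obstacle. It is where the hypothesis $\{(a,b),(c,d)\}\neq\{(0,-1),(-1,0)\}$ and the membership $p_r\in\mathcal{M}$ must be used carefully. If a genuine zero occurred, the chain would terminate there, and the set $\mathcal{S}$ would have to be reinterpreted as the points actually reached. I expect the ray structure of $\underline{\ell}^{(a,b)}\cap\mathcal{M}$ from \thref{N2} to rule this out, since $p_r$ moves monotonically along that ray. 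Granting nonvanishing, induction on $r$ gives $\partial_{p_r}\in$ Lie$(\{(a,b),(c,d)\})$, so the two algebras coincide.

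\textbf{Derived series.} The derived algebra is spanned by the brackets of basis elements. These are the multiples of $\partial_{p_{r+1}}$ for $r\geqslant0$, together with zeros, and all the multiples are nonzero by the previous step. So
\[
\text{Lie}(\{(a,b),(c,d)\})^{(1)}=\bigoplus_{r\geqslant0,\ p_{r+1}\in\mathcal{M}}\kk\partial_{p_{r+1}}=\text{Lie}(\mathcal{S}),
\]
with $\mathcal{S}=\{((r+1)a+c,(r+1)b+d)\in\mathcal{M}\mid r\in\ZZ\}\subseteq\ell^{(c,d)}$. This set is nonempty because the bracket $[\partial_{(a,b)},\partial_{(c,d)}]$ is nonzero, by \thref{P3}(b) together with the hypothesis excluding the pair $\{(0,-1),(-1,0)\}$. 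Lie$(\mathcal{S})$ is abelian by \thref{RAS1}. Therefore the second derived algebra is zero while the first is nonzero, so the derived length is exactly $2$.
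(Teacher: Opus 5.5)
Your strategy is the paper's: \thref{P1} puts every $p_r$ on $\ell^{(c,d)}=\underline{\ell}^{(a,b)}$, \thref{P3}(a) makes these derivations commute, bracketing with $\partial_{(a,b)}$ generates the chain, and the derived algebra is read off. Your closure and derived-series steps are fine.

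The problem is that the step you call the main obstacle is never proved. You announce a case analysis on the sign of $a-b$ and then write ``granting nonvanishing''. Without that step you only have Lie$(\{(a,b),(c,d)\})\subseteq V$, so neither the direct-sum description nor the formula for $\mathcal{S}$ is established.

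The computation leading into that step is also wrong. Using $a(d+1)=b(c+1)$,
\[
(c+1)(b+1)-(a+1)(d+1)=\bigl(b(c+1)-a(d+1)\bigr)+(c-d)=c-d,
\]
not $c-d+a-b$. So the coefficient of $[\partial_{(a,b)},\partial_{p_r}]$ is $(c-d)+r(a-b)$. This is $x-y$ evaluated at $p_r=(x,y)$ itself, not at $p_{r+1}$ (compare $\gamma_i$ in \thref{IC0}(c)).

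The missing idea is short, and it is the one the paper uses. Since $(a,b)\notin\ell^{(c,d)}$, the lines $\ell^{(a,b)}$ and $\ell^{(c,d)}$ are distinct lines through $(-1,-1)$, so they meet only at $(-1,-1)\notin\mathcal{M}$. Hence every $p_r\in\mathcal{M}$, which lies on $\ell^{(c,d)}$, is not on $\ell^{(a,b)}$. Then \thref{P3}(b) gives $\partial_{p_{r+1}}\in$ Lie$(\{(a,b),(c,d)\})$ inductively; when $p_{r+1}=(-1,-1)$ the bracket is just $0$.

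In your coefficient language, two facts finish the job:
\begin{itemize}
\item The hypotheses force $a\neq b$. If $a=b$, then $(c,d)\in\underline{\ell}^{(a,a)}$ gives $c=d$, and so $(a,b)\in\ell^{(c,d)}$. Your case $a=b$ is therefore vacuous.
\item For $a\neq b$, the only point of $\underline{\ell}^{(a,b)}$ with $x=y$ is $(-1,-1)$.
\end{itemize}

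The induction also needs that $p_{r+1}\in\mathcal{M}$ forces $p_r\in\mathcal{M}$. This holds because $p_r=\frac{1}{r+1}p_0+\frac{r}{r+1}p_{r+1}$ lies in the quadrant $\{x,y\geqslant -1\}$ and cannot be its corner $(-1,-1)$, since $p_0=(c,d)\neq(-1,-1)$. With these points inserted, the rest of your argument goes through.
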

\begin{proof}
	Let $\mathfrak{L}$ denote the subalgebra Lie$(\{(a,b),(c,d)\})$.
%	Suppose $(a,b)\in \mathcal{M}$, $(c,d)\in \mathcal{M}\setminus\{(0,0)\}$ and $\mathfrak{L}$:= Lie($\{(a,b), (c,d)\}$).
	Now $(c,d)\in \underline{\ell}^{(a,b)}$; hence, by \thref{P1}, we have 
	\begin{equation}\label{EP6}
	(ra+c,rb+d)\in \ell^{(c,d)}=\underline{\ell}^{(a,b)}, \text{ for all } r\in \ZY.
\end{equation}
	Next, $(a,b)\notin \ell^{(c,d)}$; therefore, $\partial_{(a+c,b+d)}\in \mathfrak{L}$ (cf. \thref{P3}(b)) and
	 \begin{equation}\label{EP61}
	 	\ell^{(c,d)}\cap \ell^{(a,b)}\cap\mathcal{M}=\emptyset.
	 \end{equation}
	Hence from \eqref{EP6} and \eqref{EP61}, we have
	\begin{center}
		$(ra+c,rb+d)\notin \ell^{(a,b)}$, for all $r\in \ZY$.
	\end{center}
	Therefore by induction and \thref{P3}(b),
	we have $$\partial_{((r+1)a+c,(r+1)b+d)}\in \mathfrak{L}, \forall r\in \ZZ, \text{ whenever } ((r+1)a+c,(r+1)b+d)\in \mathcal{M}.$$
	Let $\mathcal{S}= \{((r+1)a+c,(r+1)b+d)\in \mathcal{M}\mid r\in \ZZ\}.$
	Then by \eqref{EP6}, $\mathcal{S}\subseteq \ell^{(c,d)}=\underline{\ell}^{(a,b)}$ and it also follows that $(ra+c,rb+d)\in \ell^{(sa+c,sb+d)}$, for all $r,s\in \ZY$.
	Hence,  for any $(sa+c,sb+d), (ra+c,rb+d)\in \mathcal{M}$, we have
	\begin{equation}\label{EE2}
		[\partial_{(c,d)}, \partial_{(ra+c,rb+d)}]=0=[\partial_{(sa+c,sb+d)}, \partial_{(ra+c,rb+d)}]\, (\text{by  \eqref{EP6} and \thref{P3}(a)}). 	
	\end{equation}
	Thus,
\begin{center}
		$\mathfrak{L}=\overunderset{}{(e,f)\in \mathcal{S}}{\oplus}\kk\partial_{(e,f)}\oplus \kk\partial_{(c,d)}\oplus \kk\partial_{(a,b)}$
%\end{center}
	and 
%	\begin{equation*}
		$\mathfrak{L}^{(1)}=\overunderset{}{(e,f)\in \mathcal{S}}{\oplus}\kk\partial_{(e,f)}.$
	\end{center}
	Next by \eqref{EE2}, we have $\mathfrak{L}^{(2)}=0$.	
\end{proof}
%	\begin{rem}\thlabel{P7}
%	{\rm Consider all the assumptions of the above result and
%		let $\mathcal{S}, \mathcal{S}_1\subseteq\mathcal{M}$ be such that Lie($\{(a,b),(c,d)\}$) = Lie($\mathcal{S}$) and Lie($\{(a,b),(c,d)\}$)$^{(1)}$ = Lie($\mathcal{S}_1$). 
%		Then, the above proof of \thref{P6} reveals that 
%		\begin{center}
%			$\mathcal{S}\setminus\{(a,b)\}=\{(ra+c,rb+d)\in \mathcal{M}\mid r\in \ZZ\}\subseteq \ell^{(c,d)}=\underline{\ell}^{(a,b)}$
%		\end{center}
%		and 
%		\begin{center}
%			$\mathcal{S}_1= \{(ra+c,rb+d)\in \mathcal{M}\mid r\in \ZY\}\subseteq \ell^{(c,d)}=\underline{\ell}^{(a,b)}$.
%		\end{center}
%	}
%\end{rem}
%\noindent
Note that for any $m\in \ZY$, $(m,-1)\notin\ell^{(-1,0)}$ and $(m,-1)\in\underline{\ell}^{(-1,0)}=\ell^{(0,-1)}$.
Therefore, from the above result (\thref{P6}), we can conclude that
%\begin{center}
%	Lie$(\{(a,b),(c,d)\})$ = $\overunderset{\infty}{r=0}{\oplus}\kk\partial_{(a+rc,b+rd)}\oplus \kk\partial_{(c,d)}$, for $(a,b),(c,d)\in \ZZ^2$,
%\end{center}
\begin{center}
	Lie$(\{(m,-1),(-1,0)\})$ = $\overunderset{m}{r=0}{\oplus}\kk\partial_{(r,-1)}\oplus \kk\partial_{(-1,0)}$, for $m\in \ZY$,
\end{center}
%\begin{center}
%	Lie$(\{(m,-1),(c,0)\})$ = $\overunderset{\infty}{r=0}{\oplus}\kk\partial_{(m+rc,-1)}\oplus \kk\partial_{(c,0)}$, for $c\in \ZY,\, m\in \ZZ$,
%\end{center}
%\begin{center}
%	Lie$(\{(0,-1),(-1,m)\})$ = $\overunderset{m}{r=0}{\oplus}\kk\partial_{(-1,r)}\oplus \kk\partial_{(0,-1)}$, for $m\in \ZY$,
%\end{center}
%and
%\begin{center}
%	Lie$(\{(-1,m),(0,d)\})$ = $\overunderset{\infty}{r=0}{\oplus}\kk\partial_{(-1,m+rd)}\oplus \kk\partial_{(0,d)}$, for $d\in \ZY,\, m\in \ZZ$.
%\end{center}

Next, we provide a necessary and sufficient criterion for solvability  of a  subalgebra generated by two homogeneous derivations with non-zero bidegrees.
% not equal to ${(0,0)}$.

\begin{prop}\thlabel{Th1}
	Suppose $(a,b), (c,d)\in \mathcal{M}\setminus \{(0,0)\}$.
%	 is such that $\{(a,b),(c,d)\}\neq\{(0,-1),(-1,0)\}$.
	Then the following statements hold:
%	Lie$(\{(a,b),(c,d)\})$ is solvable if and only if one of the two statements hold:
	\begin{enumerate}[\rm(a)]
		\item Lie$(\{(a,b),(c,d)\})$ is Abelian if and only if either $\{(a,b),(c,d)\}=\{(0,-1),(-1,0)\}$ or $(a,b)\in \ell^{(c,d)}$;
		\item  Lie$(\{(a,b),(c,d)\})$ is non-abelian and solvable if and only if $(a,b)\notin \ell^{(c,d)}$,\\ $\{(a,b),(c,d)\}\neq\{(0,-1),(-1,0)\}$ and either 
		$(c,d)\in\underline{\ell}^{(a,b)}$ or $(a,b)\in~\underline{\ell}^{(c,d)}$.
		Here, Lie$(\{(a,b),(c,d)\})$ has derived length~$2$.
	\end{enumerate}
%	 $(a,b)\notin \ell^{(c,d)}$ and either
%	$(a,b)\in~\underline{\ell}^{(c,d)}$ 
%	or $(c,d)\in\underline{\ell}^{(a,b)}$.
%	Here, Lie$(\{(a,b),(c,d)\})$ is solvable of derived length~$2$.	
\end{prop}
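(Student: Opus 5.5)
Part (a) is essentially the two-element case of \thref{RAS1}, and part (b) is assembled from \thref{P6}, \thref{LNS1} and (a). Throughout, write $\mathfrak{L}=$ Lie$(\{(a,b),(c,d)\})$.

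\textbf{Part (a).} If $\{(a,b),(c,d)\}=\{(0,-1),(-1,0)\}$, then $\mathfrak{L}$ is Abelian because $\partial_{(-1,-1)}=0$. If $(a,b)\in\ell^{(c,d)}$, then $[\partial_{(a,b)},\partial_{(c,d)}]=0$ by \thref{P3}(a), so the two generators span an Abelian subalgebra. Conversely, suppose $\mathfrak{L}$ is Abelian. Then $[\partial_{(a,b)},\partial_{(c,d)}]=0$, and by \eqref{CR1} either the determinant vanishes, i.e.\ $(a,b)\in\ell^{(c,d)}$, or $\partial_{(a+c,b+d)}=0$. The second alternative means $(a+c,b+d)=(-1,-1)$, and within $\mathcal{M}$ this forces $\{(a,b),(c,d)\}=\{(0,-1),(-1,0)\}$. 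This is exactly Case~I of \thref{RAS1}. One small point to check: if $(c,d)=(-1,-1)$ the line $\ell^{(c,d)}$ is undefined, but $(-1,-1)\notin\mathcal{M}$, so this never arises.

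\textbf{Part (b), sufficiency.} Assume $(a,b)\notin\ell^{(c,d)}$, $\{(a,b),(c,d)\}\neq\{(0,-1),(-1,0)\}$, and either $(c,d)\in\underline{\ell}^{(a,b)}$ or $(a,b)\in\underline{\ell}^{(c,d)}$. Since $\ell^{(c,d)}$ is a line through $(-1,-1)$, the condition $(a,b)\in\ell^{(c,d)}$ is symmetric in the two points. So in the second case we may swap the roles of $(a,b)$ and $(c,d)$ and reduce to the first. Both points differ from $(0,0)$ by hypothesis, so \thref{P6} applies directly: $\mathfrak{L}$ is solvable of derived length~$2$. It is non-Abelian by (a), since neither Abelian condition holds.

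\textbf{Part (b), necessity.} Suppose $\mathfrak{L}$ is non-Abelian and solvable. By (a), $(a,b)\notin\ell^{(c,d)}$ and $\{(a,b),(c,d)\}\neq\{(0,-1),(-1,0)\}$. If additionally neither $(c,d)\in\underline{\ell}^{(a,b)}$ nor $(a,b)\in\underline{\ell}^{(c,d)}$ held, then \thref{LNS1} would make $\mathfrak{L}$ non-solvable, a contradiction. Hence one of the two memberships holds, and the derived length is $2$ by the sufficiency step.

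I do not see a serious obstacle, since the heavy lifting is already in \thref{LNS1} and \thref{P6}. The only care needed is the symmetry swap and confirming that the excluded pair $\{(0,-1),(-1,0)\}$ is handled consistently in (a) and (b).
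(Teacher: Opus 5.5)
Your proof is correct and follows the paper's argument essentially step for step. Part (a) is the two-element case of \thref{RAS1}. Sufficiency in (b) is \thref{P6}, applied after the without-loss-of-generality swap, which you justify explicitly through the symmetry of the condition $(a,b)\in\ell^{(c,d)}$. Necessity is the contrapositive of \thref{LNS1}. In the necessity step you also correctly take the excluded pair as $\{(a,b),(c,d)\}\neq\{(0,-1),(-1,0)\}$; the paper's text has a typo there and writes $=$.
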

\begin{proof}
	{\bf (a) : }Follows from \thref{RAS1}.
	
	{\bf (b) : }
	Let $\mathfrak{L}$ denote the subalgebra Lie($\{(a,b), (c,d)\}$).
	Suppose $\mathfrak{L}$ is non-abelian and solvable.
	Then by \thref{RAS1}, we have $\{(a,b),(c,d)\}=\{(0,-1),(-1,0)\}$ and $(a,b)\notin \ell^{(c,d)}$.
    If $(a,b)\notin \underline{\ell}^{(c,d)}$ 
	and $(c,d)\notin\underline{\ell}^{(a,b)}$ hold simultaneously, then
    by \thref{LNS1}, we can conclude that $\mathfrak{L}$ is non-solvable.
    Therefore, either $(a,b)\in~\underline{\ell}^{(c,d)}$ 
    	or $(c,d)\in\underline{\ell}^{(a,b)}$.

    Next we prove the converse.
	From the condition $\{(a,b),(c,d)\}\neq\{(0,-1),(-1,0)\}$ and $(a,b)\notin 
	\ell^{(c,d)}$, it follows that
	$\mathfrak{L}$ is non-abelian (cf. \thref{RAS1}).
	Next without loss of generality, we assume that 
	$(c,d)\in \underline{\ell}^{(a,b)}$.
	Now the result follows from \thref{P6}.
	\end{proof}
%	
%	
%	\begin{rem}\thlabel{P7}
%		{\rm Consider $(a,b),(c,d)\in \mathcal{M}\setminus\{(0,0)\}$ with $\{(a,b),(c,d)\}\neq \{(0,-1),(-1,0)\}$, $(a,b)\notin \ell^{(c,d)}$ and $(c,d)\in \underline{\ell}^{(a,b)}$.
%			Moreover,
%			let $\mathcal{S}, \mathcal{S}_1\subseteq\mathcal{M}$ be such that \begin{center}
%				Lie($\{(a,b),(c,d)\}$) = Lie($\mathcal{S}$) and Lie($\{(a,b),(c,d)\}$)$^{(1)}$ = Lie($\mathcal{S}_1$).
%			\end{center} 
%			Then the above proof of \thref{P6} (cf. \eqref{EP6}) reveals that 
%			\begin{center}
%				$\mathcal{S}\setminus\{(a,b)\}\subseteq\{(ra+c,rb+d)\in \mathcal{M}\mid r\in \ZZ\}\subseteq \ell^{(c,d)}=\underline{\ell}^{(a,b)}$
%			\end{center}
%			and 
%			\begin{center}
%				$\mathcal{S}_1= \{(ra+c,rb+d)\in \mathcal{M}\mid r\in \ZY\}\subseteq \ell^{(c,d)}=\underline{\ell}^{(a,b)}$.
%			\end{center}
%		}
%	\end{rem}
%	\begin{rem}\thlabel{RSSTR}
%		{\rm 
%		Combining \thref{RSST0} and \thref{Th1} we get the following criterion of solvability: 
%		Suppose $(a,b), (c,d)\in \mathcal{M}$ and $\{(a,b),(c,d)\}\neq\{(0,-1),(-1,0)\}$.
%		Then Lie$(\{(a,b),(c,d)\})$ is Abelian if and only if $(a,b)\in \ell^{(c,d)}$.
%		 Next, Lie$(\{(a,b),(c,d)\})$ is non-abelian and solvable if and only if $(a,b)\notin \ell^{(c,d)}$ and either
%		\begin{center}
%			$\begin{vmatrix}
%				a & c+1\\
%				b & d+1
%			\end{vmatrix}=0\,$     or     $\,\begin{vmatrix}
%			a+1 & c\\
%			b+1 & d
%			\end{vmatrix}=0.$
%		\end{center} 
%		Here, Lie$(\{(a,b),(c,d)\})$ is solvable of derived length~$2$.
%		}
%	\end{rem}

	\subsection{General case}\label{GC}
	In this subsection, our goal is to give a criteria of solvability for a regular subalgebra (\thref{GC4}).
%	Before that, we prove a technical lemma in this regard.
At first, we proof a criterion of non-solvability for a regular subalgebra generated by three homogeneous derivations with non-zero bidegrees.

	\begin{lem}\thlabel{GC1}
		Suppose for three distinct points $(a,b), (c,d), (e,f)\in \mathcal{M}\setminus\{(0,0)\}$ the following conditions hold:
		\begin{enumerate}[\rm(a)]
			\item the sets $\{(a,b),(c,d)\}$ and $\{(c,d),(e,f)\}$ are not equal to $\{(0,-1),(-1,0)\}\, ;$
			\item $(a,b), (e,f)\notin \ell^{(c,d)}\, ;$ 
			\item $(c,d)\in \underline{\ell}^{(a,b)}$ and $(e,f)\in \underline{\ell}^{(c,d)}$.
		\end{enumerate}
		Then, the subalgebra Lie$(\{(a,b), (c,d), (e,f)\})$ is non-solvable.	
	\end{lem}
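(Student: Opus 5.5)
The plan is to reduce to \thref{LNS1}. Put $\mathfrak{L}:=$ Lie$(\{(a,b),(c,d),(e,f)\})$. A subalgebra of a solvable Lie algebra is solvable. So it is enough to find two homogeneous elements $\partial_{(p,q)},\partial_{(p',q')}\in\mathfrak{L}$, with non-zero bidegrees, such that $(p,q)\notin\ell^{(p',q')}$, $(p,q)\notin\underline{\ell}^{(p',q')}$ and $(p',q')\notin\underline{\ell}^{(p,q)}$. Then Lie$(\{(p,q),(p',q')\})\subseteq\mathfrak{L}$ is non-solvable by \thref{LNS1}.

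\textbf{Step 1 (extra elements of $\mathfrak{L}$).} The pair $(a,b),(c,d)$ satisfies the hypotheses of \thref{P6}: $(c,d)\in\underline{\ell}^{(a,b)}$, $(a,b)\notin\ell^{(c,d)}$, and the pair is not $\{(0,-1),(-1,0)\}$. Hence $\partial_{(ra+c,rb+d)}\in\mathfrak{L}$ for all $r\in\ZY$ with $(ra+c,rb+d)\in\mathcal{M}$, and all these points lie on $\ell^{(c,d)}=\underline{\ell}^{(a,b)}$. The pair $(c,d),(e,f)$ also satisfies the hypotheses of \thref{P6}, with $(c,d)$ in the role of the first point. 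Hence $\partial_{(rc+e,rd+f)}\in\mathfrak{L}$ for the admissible $r\geqslant 1$, and these points lie on $\ell^{(e,f)}=\underline{\ell}^{(c,d)}$.

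\textbf{Step 2 (translating the hypotheses).} I would encode the hypotheses with two determinant identities and two non-vanishings:
\[
\begin{vmatrix} a & c+1\\ b & d+1\end{vmatrix}=0,\qquad \begin{vmatrix} c & e+1\\ d & f+1\end{vmatrix}=0,
\]
\[
\begin{vmatrix} c+1 & a+1\\ d+1 & b+1\end{vmatrix}\neq 0,\qquad \begin{vmatrix} c+1 & e+1\\ d+1 & f+1\end{vmatrix}\neq 0 .
\]
Geometrically, $\ell^{(c,d)}$ has direction $(a,b)$ and $\ell^{(e,f)}$ has direction $(c,d)$. Since $(a,b)\notin\ell^{(c,d)}$, the vectors $(a,b)$ and $(c,d)$ are not parallel except in the degenerate situations where one of them lies on the diagonal $\ell_{(1,1)}$. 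Those situations I would treat separately at the end.

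\textbf{Step 3 (choosing the test pair).} The natural candidates are:
\begin{itemize}
\item $\big((a,b),(c+e,d+f)\big)$;
\item $\big((a+c,b+d),(e,f)\big)$;
\item $\big((a+c,b+d),(c+e,d+f)\big)$.
\end{itemize}
For each candidate, the three determinants that must be non-zero for \thref{LNS1} are expanded by multilinearity into combinations of the basic minors
\[
D_1=\begin{vmatrix} a&c\\ b&d\end{vmatrix},\quad D_2=\begin{vmatrix} c&e\\ d&f\end{vmatrix},\quad D_3=\begin{vmatrix} a&e\\ b&f\end{vmatrix},
\]
together with the minors $\begin{vmatrix}1&\ast\\1&\ast\end{vmatrix}$ and $\begin{vmatrix}\ast&1\\\ast&1\end{vmatrix}$, as in the proofs of \thref{P4} and \thref{P5}. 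One then simplifies using the two relations from Step 2. I expect that, for the first candidate, each vanishing condition forces one of the excluded equalities: $(a,b)\in\ell^{(c,d)}$, $(e,f)\in\ell^{(c,d)}$, or two of the three points coinciding.

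If one candidate pair can fail for a specific configuration, I would replace $(a,b)$ by $(ra+c,rb+d)$ for a large $r$, as in \thref{P5}(b). A determinant that is affine in $r$ and not identically zero vanishes for at most one $r$, so some $r$ avoids all three vanishing conditions.

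\textbf{Main obstacle.} The hard part is the case bookkeeping. First, one has to make sure the chosen points actually lie in $\mathcal{M}$: the sums $(c+e,d+f)$ can leave $\mathcal{M}$ only when $(-1,-1)$ appears, which is excluded by hypothesis (a). Second, the degenerate configurations must be handled: points on $\ell_{(1,1)}$, and points involving $(0,-1)$ or $(-1,0)$, where $\ell^{(\cdot)}$ and $\underline{\ell}^{(\cdot)}$ coincide, as in \thref{P2}. For these I would check directly that some $\partial_{(ra+c,rb+d)}$ and $\partial_{(e,f)}$ (or $\partial_{(rc+e,rd+f)}$) satisfy the hypotheses of \thref{LNS1}, using the explicit descriptions of $\underline{\ell}^{(a,b)}\cap\mathcal{M}$ in \thref{N2}.
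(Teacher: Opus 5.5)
\textbf{There is a genuine gap.} Reducing to \thref{LNS1} is the right idea, and it is also what the paper does. But your proposal stops exactly where the work lies: you never verify the three hypotheses of \thref{LNS1} for any specific pair, and the mechanism you suggest for doing so does not work as stated.

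For your first candidate, the condition $(a,b)\notin\underline{\ell}^{(c+e,d+f)}$ is \emph{not} a consequence of multilinearity and your Step~2 relations. Take the rational points $(a,b)=(\tfrac13,1)$, $(c,d)=(-2,-4)$, $(e,f)=(0,1)$. They satisfy both determinant identities and both non-vanishings, yet $(a,b)\in\underline{\ell}^{(c+e,d+f)}$. Excluding such configurations needs the sign constraints that define $\mathcal{M}$, which you never use.

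The fallback does not rescue this either. When $(a,b)\in\{(0,-1),(-1,0)\}$, the points $(ra+c,rb+d)$ leave $\mathcal{M}$ for large $r$. For example, $(a,b)=(0,-1)$ and $(c,d)=(-1,n)$ give $(-1,n-r)$. You also do not check that the relevant affine functions of $r$ are not identically zero. The diagonal cases you set aside never arise: subtracting your identities from your non-vanishings gives $c\neq d$ and $e\neq f$, and hence also $a\neq b$.

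\textbf{How to close it.} Your second candidate works with no case distinction. Put $P=(a+c,b+d)$. By \thref{P3}(b) and its proof, together with \thref{P6}, we have $\partial_P\in\mathfrak{L}$, $P\in\mathcal{M}$, and $P\in\ell^{(c,d)}=\underline{\ell}^{(a,b)}$. Also $P\neq(0,0)$, since $(0,0)\in\ell^{(c,d)}$ would force $c=d$. Now check the three hypotheses of \thref{LNS1}:
\begin{enumerate}[(i)]
\item $P\notin\ell^{(e,f)}$. Since $\ell^{P}=\ell^{(c,d)}$, this holds because $(e,f)\notin\ell^{(c,d)}$ by (b).
\item $P\notin\underline{\ell}^{(e,f)}$. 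Otherwise $\underline{\ell}^{(e,f)}=\ell^{(c,d)}$, so $(c,d)\in\underline{\ell}^{(e,f)}$. Then \thref{P2}, together with (a) and (c), would give $(e,f)\in\ell^{(c,d)}$, contradicting (b).
\item $(e,f)\notin\underline{\ell}^{P}$. By multilinearity and (c),
\[
\begin{vmatrix} a+c & e+1\\ b+d & f+1\end{vmatrix}=\begin{vmatrix} a & e+1\\ b & f+1\end{vmatrix},
\]
so $(e,f)\in\underline{\ell}^{P}$ would mean $(e,f)\in\underline{\ell}^{(a,b)}=\ell^{(c,d)}$, again contradicting (b).
\end{enumerate}
Hence \thref{LNS1} applies to $\{P,(e,f)\}$, and $\mathfrak{L}$ is non-solvable.

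\textbf{Comparison with the paper.} The paper first tests the pair $(a,b),(e,f)$ and shows that two of the three conditions always hold. When $\mathrm{Lie}(\{(a,b),(e,f)\})$ is nevertheless solvable, it uses \thref{Th1} to get $(a,b)\in\underline{\ell}^{(e,f)}$ and switches to the pair $(a+e,b+f),(c,d)$. The route above needs one pair, no case split, and no appeal to \thref{Th1}.
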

	\begin{proof}
		Note that $\{(a,b),(c,d)\}\neq\{(0,-1),(-1,0)\}$ (condition~(a)), $(a,b)\notin \ell^{(c,d)}$ (condition~(b)) and $(c,d)\in \underline{\ell}^{(a,b)}$ (condition~(c)); hence, by \thref{P2}, we have
		\begin{equation}\label{GCE1}
			(a,b)\notin \underline{\ell}^{(c,d)}.
		\end{equation}
%		Similarly, from the assumptions we can conclude that 
%		\begin{equation}\label{GCE2}
%			(c,d)\notin \underline{\ell}^{(e,f)}.
%		\end{equation}
		
		Let $\mathfrak{L}$ denote the subalgebra Lie$(\{(a,b),(c,d),(e,f)\})$ and $\mathfrak{L}_1$ denote the subalgebra Lie$(\{(a,b),(e,f)\})$.
		If $\mathfrak{L}_1$ is non-solvable then $\mathfrak{L}$ is non-solvable and we are done.
		
		\smallskip
		So henceforth, we assume that $\mathfrak{L}_1$ is solvable.
		Now, we will try to understand the relation between $(a,b)$ and $(e,f)$.
		
%		If $\{(a,b),(e,f)\}= \{(0,-1),(-1,0)\}$ then $\underline{\ell}^{(a,b)}=\ell^{(e,f)}$.
%		Hence, by condition~(c), we have $(c,d)\in \underline{\ell}^{(a,b)}=\ell^{(e,f)}$.
%		But it contradicts the fact that $(c,d)\in \underline{\ell}^{(a,b)}$ (condition~(c)) and $(c,d)\in \ell^{(e,f)}$ (condition~(b)).
%		Therefore, 
%		\begin{equation}\label{GCE3}
%			\{(a,b),(e,f)\}\neq\{(0,-1),(-1,0)\}.
%		\end{equation}
		
		By \eqref{GCE1}, we have $(a,b)\notin \underline{\ell}^{(c,d)}$ and by condition~(c), we know that $\ell^{(e,f)}= \underline{\ell}^{(c,d)}$; hence,
		\begin{equation}\label{GCE4}
			(a,b)\notin\ell^{(e,f)}.
		\end{equation}
		If $(e,f)\in \underline{\ell}^{(a,b)}$, then by condition~(c), we have $(c,d),(e,f)\in \underline{\ell}^{(a,b)}$, i.e., $(e,f)\in \ell^{(c,d)}$, a contradiction to condition~(b). 
		 Therefore,
		 \begin{equation}\label{GCE5}
		 	(e,f)\notin \underline{\ell}^{(a,b)}.
		 \end{equation}
		 Hence, it follows that
		 \begin{equation}\label{GCE3}
		 \{(a,b),(e,f)\}\neq\{(0,-1),(-1,0)\}.
		 \end{equation}
		Now, $\mathfrak{L}_1$ is solvable; hence by \eqref{GCE4}, \eqref{GCE5}, \eqref{GCE3} and \thref{Th1}, we can conclude that \begin{equation}\label{GCE6}
			(a,b)\in \underline{\ell}^{(e,f)}.
		\end{equation}
		Therefore, by \thref{P1}, we have 
		\begin{equation}\label{GCE31}
			(a+e,b+f)\in \ell^{(a,b)}.
		\end{equation}
		Note that by \eqref{GCE4} and \thref{P3}(b), we can conclude that $\partial_{(a+e,b+f)}\in \mathfrak{L}_1\subseteq \mathfrak{L}$.
		Let $\mathfrak{L}_2$ denote the subalgebra Lie$(\{(a+e,b+f),(c,d)\})$.
%		
%		, $(a+e,b+f)\in \underline{\ell}^{(e,f)}$ (by \thref{P7}).
		Next, with the help of \thref{LNS1}, we are going to show that $\mathfrak{L}_2$ is non-solvable and hence it will follow that $\mathfrak{L}$ is non-solvable.
		
		Note that $(a+e,b+f),(c,d)\in \mathcal{M}\setminus\{(0,0)\}$.
		Now if $(a+e,b+f)\in \ell^{(c,d)}$ then by \eqref{GCE31}, we have $\ell^{(a,b)}=\ell^{(a+e,b+f)}= \ell^{(c,d)}$, i.e., $(a,b)\in \ell^{(c,d)}$.
		This contradicts condition~(b); hence,
		\begin{equation}\label{GCE7}
			(a+e,b+f)\notin \ell^{(c,d)}.
		\end{equation}
		If $(a+e,b+f)\in \underline{\ell}^{(c,d)}$ then by condition~(c), we have $(a+e,b+f)\in \underline{\ell}^{(c,d)}=\ell^{(e,f)}.$
		Now, by \thref{P1}, we can conclude that $(e,f)\in \underline{\ell}^{(a,b)}$, a contradiction to \eqref{GCE5}.
		Hence,
		\begin{equation}\label{GCE8}
			(a+e,b+f)\notin \underline{\ell}^{(c,d)}.
		\end{equation}
		If $(c,d)\in \underline{\ell}^{(a+e,b+f)}$, then by condition~(c), we have $\underline{\ell}^{(a,b)}=\ell^{(c,d)}=\underline{\ell}^{(a+e,b+f)}$.
		Therefore, by \thref{N3}, we have $(a,b),(a+e,b+f)\in \widetilde{\ell}_{(c,d)}$, i.e., $(a,b)\in \ell_{(e,f)}$.
		 Now from \eqref{GCE6}, it follows that $e=f$, i.e., $\ell^{(e,f)}=\underline{\ell}^{(e,f)}$.
		This contradicts the facts that $(a,b)\notin \ell^{(e,f)}$ (cf. \eqref{GCE4}) and $(a,b)\in \underline{\ell}^{(e,f)}$ (cf. \eqref{GCE6}).
		Therefore,
		\begin{equation}\label{GCE9}
			(c,d)\notin \underline{\ell}^{(a+e,b+f)}.
		\end{equation}
%		
%		If $\{(a+e,b+f),(c,d)\}= \{(0,-1),(-1,0)\}$ then $\underline{\ell}^{(c,d)}=\ell^{(a+e,b+f)}$.
%		Therefore, by condition~(c), we have $(e,f)\in \underline{\ell}^{(c,d)}=\ell^{(a+e,b+f)}$, i.e., .
%		Hence, by \thref{P1}, we have 
%		Hence, by condition~(c), we have $(c,d)\in \underline{\ell}^{(a,b)}=\ell^{(e,f)}$.
%		But it contradicts the fact that $(c,d)\in \underline{\ell}^{(a,b)}$ (condition~(c)) and $(c,d)\in \ell^{(e,f)}$ (condition~(b)).
%		Therefore, 
%		\begin{equation}\label{GCE3}
%			\{(a,b),(e,f)\}\neq\{(0,-1),(-1,0)\}.
%		\end{equation}
		Thus, by \eqref{GCE7}, \eqref{GCE8}, \eqref{GCE9} and \thref{LNS1}, we can conclude that $\mathfrak{L}_2$ is non-solvable.
%		Hence it follows that Lie($\{(a,b), (c,d), (e,f)\}$) is non-solvable.
	\end{proof}
Next, we give a criterion of solvability for a non-abelian regular subalgebra generated by homogeneous derivations, with non-zero bidegrees.
	It can be considered a generalization of \thref{Th1}(b) (cf. \thref{P6}).
	\begin{lem}\thlabel{GC2}
		Let $\mathcal{S}\subseteq \mathcal{M}\setminus\{(0,0)\}$ be such that $S\neq \{(0,-1), (-1,0)\}$.
		Then Lie$(\mathcal{S})$ is non-abelian and solvable if and only if there exists $(a,b)\in \mathcal{S}$ such that $a\neq b$,\\ $\mathcal{S}\setminus\{(a,b)\}\neq\emptyset$ and $\mathcal{S}\setminus\{(a,b)\}\subseteq \underline{\ell}^{(a,b)}$.
		Here, Lie$(\mathcal{S})$ is solvable of derived length $2$.	
	\end{lem}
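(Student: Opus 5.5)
The plan is to prove the two implications separately. For the ``if'' direction I will describe Lie$(\mathcal{S})$ explicitly, in the spirit of \thref{P6}. For the ``only if'' direction I will reduce to pairs and triples of generators, using \thref{Th1} and \thref{GC1}.

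\emph{Sufficiency.} Fix $(a,b)\in\mathcal{S}$ with $a\neq b$ and $\mathcal{S}':=\mathcal{S}\setminus\{(a,b)\}\subseteq\underline{\ell}^{(a,b)}$, and write $\ell:=\underline{\ell}^{(a,b)}$. Since $\ell$ is a line through $(-1,-1)$, \thref{RAS1} shows that the elements $\partial_{(e,f)}$ with $(e,f)\in\ell\cap\mathcal{M}$ pairwise commute. By \thref{P1}, for $(c,d)\in\ell$ every point $(ra+c,rb+d)$ with $r\geqslant 1$ lies again on $\ell^{(c,d)}=\ell$. Using \eqref{CR1} and induction on the length of brackets, every iterated bracket of generators involving some $(c,d)\in\mathcal{S}'$ is a multiple of some $\partial_{(e,f)}$ with $(e,f)\in\ell\cap\mathcal{M}$. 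Hence
\[
\text{Lie}(\mathcal{S})\subseteq \kk\partial_{(a,b)}\oplus\bigoplus_{(e,f)\in\ell\cap\mathcal{M}}\kk\partial_{(e,f)},
\]
and therefore Lie$(\mathcal{S})^{(1)}$ lies in the Abelian span over $\ell$, so Lie$(\mathcal{S})^{(2)}=0$.

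For non-commutativity, note that $(a,b)\in\underline{\ell}^{(a,b)}$ iff $a(b+1)-b(a+1)=a-b=0$. So $a\neq b$ gives $(a,b)\notin\ell$, hence $(a,b)\notin\ell^{(c,d)}=\ell$ for every $(c,d)\in\mathcal{S}'$. By \thref{P3} the bracket $[\partial_{(a,b)},\partial_{(c,d)}]$ is then non-zero unless $\{(a,b),(c,d)\}=\{(0,-1),(-1,0)\}$. If that exceptional pair occurs, the hypothesis $\mathcal{S}\neq\{(0,-1),(-1,0)\}$ provides another point of $\mathcal{S}'$ on $\ell$, and I use that point instead. This gives derived length exactly $2$.

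\emph{Necessity.} Suppose Lie$(\mathcal{S})$ is non-abelian and solvable.
\begin{enumerate}[\rm(i)]
\item By \thref{RAS1} there are $(a,b),(c,d)\in\mathcal{S}$ with $(a,b)\notin\ell^{(c,d)}$ and $\{(a,b),(c,d)\}\neq\{(0,-1),(-1,0)\}$.
\item Lie$(\{(a,b),(c,d)\})$ is solvable, so \thref{Th1}(b) lets me assume, after relabelling, that $(c,d)\in\underline{\ell}^{(a,b)}$.
\item Then $a\neq b$: otherwise $\underline{\ell}^{(a,b)}=\ell^{(a,b)}$, which would give $(a,b)\in\ell^{(c,d)}$.
\item It remains to show that every further $(e,f)\in\mathcal{S}$ lies on $\underline{\ell}^{(a,b)}=\ell^{(c,d)}$, and this is the main obstacle.
\end{enumerate}

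To handle step (iv), I suppose $(e,f)\notin\ell^{(c,d)}$ and seek a contradiction. I first dispose of the degenerate possibility that a pair equals $\{(0,-1),(-1,0)\}$ by direct inspection.
\begin{itemize}
\item Applying \thref{Th1} to the pair $\{(c,d),(e,f)\}$ gives either $(e,f)\in\underline{\ell}^{(c,d)}$ or $(c,d)\in\underline{\ell}^{(e,f)}$.
\item In the first case the triple $(a,b),(c,d),(e,f)$ satisfies the hypotheses of \thref{GC1}, which contradicts solvability.
\item In the second case $\underline{\ell}^{(e,f)}=\ell^{(c,d)}=\underline{\ell}^{(a,b)}$. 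By \thref{N3}, $(a,b)$ and $(e,f)$ then both lie on $\widetilde{\ell}_{(c,d)}$, so they are parallel directions. I then examine the pair $\{(a,b),(e,f)\}$ via \thref{Th1}. If it is non-commuting, one of them lies on $\underline{\ell}$ of the other, and I aim either to reach a \thref{GC1} configuration (using $(a,b)$ or $(e,f)$ as the middle point) or to contradict $a\neq b$. If it is commuting, I intend to show $(e,f)=(a,b)$, or else find a forbidden triple.
\end{itemize}
I expect this final bookkeeping to be the delicate part. I would try first to reduce it to \thref{GC1} by a suitable choice of which point plays the role of $(c,d)$ there.
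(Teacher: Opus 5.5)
\textbf{The gap is step (iv).} Your sufficiency argument is correct. The explicit invariant span $\kk\partial_{(a,b)}\oplus\bigoplus_{(e,f)\in\ell\cap\mathcal{M}}\kk\partial_{(e,f)}$ is in fact a cleaner justification of Lie$(\mathcal{S})^{(1)}\subseteq$ Lie$(\underline{\ell}^{(a,b)})$ than the paper gives. Steps (i)--(iii) and your first sub-case of (iv), via \thref{GC1}, coincide with the paper's argument. But step (iv) is not actually carried out:
\begin{itemize}
\item the degenerate pair $\{(c,d),(e,f)\}=\{(0,-1),(-1,0)\}$ is dismissed as ``direct inspection'';
\item the case $(e,f)\notin\ell^{(c,d)}$, $(c,d)\in\underline{\ell}^{(e,f)}$ is left as a list of intentions.
\end{itemize}
The route you say you would try first cannot work on the three points $(a,b),(c,d),(e,f)$: there is no \thref{GC1} chain among them. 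Both $(a,b)$ and $(e,f)$ have $(c,d)$ on their $\underline{\ell}$-line. The reverse incidences $(a,b)\in\underline{\ell}^{(c,d)}$ and $(e,f)\in\underline{\ell}^{(c,d)}$ are excluded by \thref{P2}. And, as shown below, neither of $(a,b),(e,f)$ lies on the $\underline{\ell}$-line of the other.

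\textbf{How to close it.} The missing observation is that the identity $\underline{\ell}^{(e,f)}=\ell^{(c,d)}=\underline{\ell}^{(a,b)}$, which you already wrote down, yields all three hypotheses of \thref{LNS1} for the pair $\{(a,b),(e,f)\}$.
\begin{itemize}
\item $(e,f)\in\underline{\ell}^{(a,b)}$ would put $(e,f)$ on $\ell^{(c,d)}$, which is excluded by assumption.
\item $(a,b)\in\underline{\ell}^{(e,f)}$ would put $(a,b)$ on $\ell^{(c,d)}$, which is excluded by step (i).
\item If $(a,b)\in\ell^{(e,f)}$, the distinct points $(a,b),(e,f)$ would lie both on $\ell^{(e,f)}\ni(-1,-1)$ and on $\widetilde{\ell}_{(c,d)}\ni(0,0)$. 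So these two lines coincide, the common line is the diagonal, and $a=b$, a contradiction.
\end{itemize}
Hence Lie$(\{(a,b),(e,f)\})$ is non-solvable, contradicting solvability of Lie$(\mathcal{S})$; this is exactly the paper's Case~(c).

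For the degenerate pair, say $(c,d)=(0,-1)$ and $(e,f)=(-1,0)$. The condition $(c,d)\in\underline{\ell}^{(a,b)}$ forces $b=0$, and then $a\in\ZY$ since $a\neq b$ and $(a,b)\neq(e,f)$. Since $[\partial_{(-1,0)},\partial_{(a,0)}]=(a+1)\partial_{(a-1,0)}$, repeated bracketing with $\partial_{(-1,0)}$ produces $\partial_{(1,0)}$. Then Lie$(\mathcal{S})$ contains the copy of $\mathfrak{sl}_2$ spanned by $\partial_{(-1,0)},\partial_{(0,0)},\partial_{(1,0)}$. Alternatively, $(a,0)$ and $(-1,0)$ satisfy the hypotheses of \thref{LNS1} directly. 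The other orientation is symmetric.

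With these two arguments supplied, your proof is complete and agrees with the paper's.
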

	\begin{proof}
		Suppose, there exists
		\begin{center}
		$(a,b)\in \mathcal{S}$ such that $a\neq b$, $\mathcal{S}\setminus\{(a,b)\}\neq \emptyset$ and $\mathcal{S}\setminus\{(a,b)\}\subseteq \underline{\ell}^{(a,b)}\cap\mathcal{M}$.	
		\end{center}
		By the condition $a\neq b$, it follows that $(a,b)\notin \underline{\ell}^{(a,b)}$.
		Hence, by \thref{RAS1}, we can conclude that Lie($\mathcal{S}$) is non-abelian.
		Now, for any $(c,d)\in \mathcal{S}\setminus\{(a,b)\}$, we have $(c,d)\in~\underline{\ell}^{(a,b)}$.
		Hence, by \thref{P6}, we know that 
		\begin{center}
			Lie$(\{(a,b),(c,d)\})^{(1)}\subseteq\ell^{(c,d)}=\underline{\ell}^{(a,b)}.$
		\end{center}
		Therefore,
		\begin{center}
			Lie($\mathcal{S})^{(1)}\subseteq$ 
%Lie($\mathcal{S}\setminus\{(a,b)\}$) $\subseteq$ 
Lie($\underline{\ell}^{(a,b)}$).	
		\end{center}
		Next by \thref{RAS1}, we know that Lie($\underline{\ell}^{(a,b)}$) is Abelian and hence, it follows that Lie($\mathcal{S})^{(1)}$ is Abelian.
		Therefore, Lie($\mathcal{S}$) is solvable of derived length~$2$.
		
		Now we prove the converse.
		Let Lie$(\mathcal{S})$ be non-abelian and solvable.
		Hence there \\ exist $(a,b), (c,d)\in \mathcal{S}$ such that Lie($\{(a,b),(c,d)\}$) is non-abelian and solvable.
		Then by \thref{RAS1}, we have 
		\begin{equation}\label{GCE10}
		\{(a,b),(c,d)\}\neq \{(0,-1),(-1,0)\}	
		\end{equation}
		and
		\begin{equation}\label{GCE11}
		(a,b)\notin \ell^{(c,d)}.	
		\end{equation} 
		Next by \thref{Th1}(b), either $(a,b)\in \underline{\ell}^{(c,d)}$ or $(c,d)\in \underline{\ell}^{(a,b)}$.
		Without loss of generality, we assume that 
		\begin{equation}\label{GCE12}
		(c,d)\in \underline{\ell}^{(a,b)}.	
		\end{equation}
		From \eqref{GCE11} and \eqref{GCE12}, it follows that 
		\begin{equation}
			a\neq b.
		\end{equation}
		Now, we have to show that $\mathcal{S}\setminus\{(a,b)\}\subseteq \underline{\ell}^{(a,b)}\cap\mathcal{M}$.
		We will prove that in two cases.
		
		\noindent
		{\bf Case I : } Suppose Lie($\mathcal{S}$) = Lie$(\{(a,b),(c,d)\})$.\\
		\noindent
		Then the result follows from \thref{P6}. 
		
		\noindent
		{\bf Case II : }
	 Lie$(\mathcal{S})\setminus\text{Lie}(\{(a,b),(c,d)\})\neq \emptyset$.\\
	\noindent
	Let $(e,f)\in\mathcal{S}$ be such that $\partial_{(e,f)}\in$ Lie$(\mathcal{S})$ and  $\partial_{(e,f)}\notin$ Lie$(\{(a,b),(c,d)\})$.
	Then, we will show that 
	\begin{equation}\label{GCE13}
		(e,f)\in \ell^{(c,d)}=\underline{\ell}^{(a,b)}.
	%	\text{ (cf. \eqref{GCE11}) }.
	\end{equation}

	Let $\mathfrak{L}$ denote the subalgebra Lie$(\{(a,b),(c,d),(e,f)\})$ and $\mathfrak{L}_1$ denote the subalgebra Lie$(\{(c,d),(e,f)\})$.
    Then both $\mathfrak{L}$ and $\mathfrak{L}_1$ are solvable.
    Consider $\mathfrak{L}_1$; then, by \thref{Th1}, we have four options either $(e,f)\in \ell^{(c,d)}$ or $\{(c,d),(e,f)\}=\{(0,-1),(-1,0)\}$  or $(e,f)\notin \ell^{(c,d)}$ with $(e,f)\in \underline{\ell}^{(c,d)}$ or $(e,f)\notin \ell^{(c,d)}$ with $(c,d)\in \underline{\ell}^{(e,f)}$.
    Next, we will show that the last three cases can not occur and hence \eqref{GCE13} will follow.\\
% 
%    
%		Then by \thref{GC1},  we can conclude that either  
%		Next we consider the first two cases.\\
		
		\noindent
		{\bf Case (a) : }Suppose, if possible, that $\{(c,d),(e,f)\}=\{(0,-1),(-1,0)\}$.
		
		\noindent
		With out loss of generality, we assume that $(c,d)=(0,-1)$ and $(e,f)=(-1,0)$.
		Then from $(c,d)\in \underline{\ell}^{(a,b)}$ (cf. \eqref{GCE12}), it follows that $b=0$.
		Now $(e,f)\neq (a,b)$, hence $a\in \ZY$.
		Next by the commutation relation~\eqref{CR1}, we have
		$$[\partial_{(e,f)}, \partial_{(a,b)}]=[\partial_{(-1,0)}, \partial_{(a,0)}]=(a+1)\partial_{(a-1,0)}.$$
		Therefore, by induction $\partial_{(-1,0)},\partial_{(1,0)}\in \mathfrak{L}$.
		But Lie($\{(-1,0),(1,0)\}$) is isomorphic to $\mathfrak{sl}_2$ and that makes $\mathfrak{L}$ non-solvable, a contradiction.
		Therefore, 
		\begin{equation}\label{GCE14}
		\{(c,d),(e,f)\}\neq\{(0,-1),(-1,0)\}.	
		\end{equation}
		\noindent
		{\bf Case (b) : }Suppose, if possible, that $(e,f)\notin \ell^{(c,d)}$ and $(e,f)\in \underline{\ell}^{(c,d)}$.\\
		\noindent
		Then by \eqref{GCE10}, \eqref{GCE11}, \eqref{GCE12}, \eqref{GCE14} and \thref{GC1}, we can conclude that $\mathfrak{L}$ is non-solvable, a contradiction.

		\noindent
		{\bf Case (c) : }Suppose, if possible, that $(e,f)\notin \ell^{(c,d)}$ and $(c,d)\in \underline{\ell}^{(e,f)}$.\\
		\noindent
		Let $\mathfrak{L}_2$ denote the subalgebra Lie$(\{(a,b),(e,f)\})$.
		Next, with the help of \thref{LNS1}, we will show that $\mathfrak{L}_2$ is non-solvable and hence we will arrive at a contradiction.
		
		Note that $(c,d)\in \underline{\ell}^{(a,b)}$ (\eqref{GCE12}); therefore by \thref{N3}, we have $(a,b), (e,f)\in \widetilde{\ell}_{(c,d)}$.
		Hence, we can conclude that
		$\ell_{(a,b)}= \ell_{(e,f)}.$
%		Therefore, 
%		\begin{equation}
%			\{(a,b),(e,f)\}\neq\{(0,-1),(-1,0)\}.
%		\end{equation}
%		Consider the subalgebra Lie($\{(a,b),(e,f)\}$).

		Now if $(a,b)\in \ell^{(e,f)}$, then it follows that $e=f$ and hence $\ell^{(e,f)}= \underline{\ell}^{(e,f)}$.
		But it contradicts our assumption that $(c,d)\notin \ell^{(e,f)}$ and $(c,d)\in \underline{\ell}^{(e,f)}$.
		Therefore,
		\begin{equation}\label{GCE15}
			(a,b)\notin \ell^{(e,f)}.	
		\end{equation}
		Next if  $(e,f)\in \underline{\ell}^{(a,b)}$, then by \eqref{GCE12}, we have $\ell^{(c,d)}= \underline{\ell}^{(a,b)}=\ell^{(e,f)}$, i.e., $(e,f)\in \ell^{(c,d)}$, a contradiction.
		Therefore, 
		\begin{equation}\label{GCE16}
			(e,f)\notin \underline{\ell}^{(a,b)}.	
		\end{equation}
		Lastly, if $(a,b)\in \underline{\ell}^{(e,f)}$,
		then $\ell^{(c,d)}=\underline{\ell}^{(e,f)}=\ell^{(a,b)}$.
		Hence, $(a,b)\in \ell^{(c,d)}$, a contradiction to \eqref{GCE11}.
%		 to the fact that Lie($\mathcal{S}$) is solvable.
		Therefore,
		\begin{equation}\label{GCE17}
			(a,b)\notin \underline{\ell}^{(e,f)}.	
		\end{equation} 
		Now, by \eqref{GCE15}, \eqref{GCE16}, \eqref{GCE17} and \thref{LNS1}, it follows that $\mathfrak{L}_2$ is non-solvable, a contradiction.
		
		Next, consider the solvable subalgebra $\mathfrak{L}_1$; by Case~(a), Case~(b) and Case~(c) and \thref{Th1}, it follows that 
		\begin{center}
			 	$(e,f)\in \ell^{(c,d)}=\underline{\ell}^{(a,b)},\,\forall
			 	%	\text{ (cf. \eqref{GCE11}) }.
 (e,f)\in\mathcal{S}$ with $\partial_{(e,f)}\in$ Lie$(\mathcal{S})$ and  $\partial_{(e,f)}\notin$ Lie$(\{(a,b),(c,d)\})$.
		\end{center}
		Now, combining the above result with \thref{P6}, we have 
		$\mathcal{S}\setminus\{(a,b)\}\subseteq \underline{\ell}^{(a,b)}\cap\mathcal{M}.$
	\end{proof}
%	In the above proposition the condition $\underline{\ell}^{(a,b)}\cap \mathcal{M}\neq \emptyset$ is very important.
%	It helps us identify all the regular Borel subalgebras.
%	\begin{rem}\thlabel{GC3}
%		Note that if $(a,b)\in \{(-1,m),(m,-1)\mid m\in \ZY\}$	then $\underline{\ell}^{(a,b)}\cap \mathcal{M}= \emptyset$.
%		For all other $(a,b)$ in $\mathcal{M}$, $\underline{\ell}^{(a,b)}\cap \mathcal{M}\neq \emptyset$.
%	\end{rem}
	Next, we provide a necessary and sufficient condition for solvability  of a  regular subalgebra.
%	Next proposition is the first step towards the main result (\thref{MT1}).
	\begin{prop}\thlabel{GC4}
		Let $\mathfrak{L}$ be a regular subalgebra of Lie(Aut($\mathbb{A}^2$)).
		Then the following statements hold:
	\begin{enumerate}[\rm(a)]
		\item $\mathfrak{L}$ is Abelian if and only if either  $\mathfrak{L}\subseteq$ Lie$(\{(0,0)\},\Delta)$ or
		$\mathfrak{L}\subseteq$ Lie$(\{(1,-1)\},\Delta)$ or
		$\mathfrak{L}\subseteq$ Lie$(\{(-1,1)\},\Delta)$ or $\mathfrak{L}=$ Lie$(\{(0,-1),(-1,0)\})$ or
		$\mathfrak{L}=$ Lie$(\mathcal{S})$, for some $\mathcal{S}\subseteq\mathcal{M}$ such that $\mathcal{S}\subseteq \ell^{(c,d)}$, for all $(c,d)\in \mathcal{S}$;
		\item  $\mathfrak{L}$ is non-abelian and solvable if and only if $\mathfrak{L}^{(1)}$ = Lie($\mathcal{S}$), for some non-empty subset $\mathcal{S}$ of $ \mathcal{M}\setminus\{(0,0)\}$ and either $\mathcal{S}=\{(0,-1),(-1,0)\}$ or $\mathcal{S}\subseteq\ell^{(c,d)}$, for all $(c,d)\in \mathcal{S}$ or there exists $(a,b)\in \mathcal{S}$ such that $a\neq b$, $\mathcal{S}\setminus\{(a,b)\}\neq \emptyset$ and $\mathcal{S}\setminus\{(a,b)\}\subseteq \underline{\ell}^{(a,b)}=\ell^{(c,d)}$, for all $(c,d)\in \mathcal{S}\setminus\{(a,b)\}$. 
		
		\hspace{4mm} Moreover, for the first two cases $\mathfrak{L}$ has derived length~$2$ and for the last case $\mathfrak{L}$ has derived length~$3$.
		%		Moreover, in the last case if $a\neq b$ then $\mathfrak{L}$ has derived length~$3$; otherwise, $\mathfrak{L}$ has derived length~$2$.
	\end{enumerate}	
	\end{prop}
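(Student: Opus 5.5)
We reduce both parts to the criteria already proved for subalgebras generated by homogeneous derivations of non-zero bidegree (\thref{RAS1}, \thref{GC2}), after peeling off the degree-$(0,0)$ part.

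\textbf{Reduction.} Since $\mathfrak{L}$ is regular, it is spanned by homogeneous derivations. Their bidegrees lie in $\mathcal{M}$, and in bidegree $(0,0)$ they have the form $\lambda\partial_{(0,0)}+\mu\Delta$. By \eqref{CR1} and \eqref{CR3}, each $\partial_{(c,d)}$ is an eigenvector of both $\mathrm{ad}\,\partial_{(0,0)}$ and $\mathrm{ad}\,\Delta$.

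\textbf{Part (a).} We split according to whether $\mathfrak{L}$ contains a non-zero element of bidegree $(0,0)$.
\begin{enumerate}[\rm(i)]
\item If it does, commutativity with $\lambda\partial_{(0,0)}+\mu\Delta$ forces every other generator $\partial_{(c,d)}$ to satisfy $\lambda(c-d)+\mu(c+d)=0$.
\begin{enumerate}[\rm(1)]
\item If $\mu\neq 0$, this says $(c,d)$ is proportional to $(\lambda-\mu,\lambda+\mu)$. If $\lambda\neq 0$ as well, then $(c,d)\in\{(1,-1),(-1,1)\}$ and the element is not a multiple of $\Delta$. One checks from \eqref{CR1} that $[\partial_{(0,0)},\partial_{(1,-1)}]\neq 0$, which reduces to the configurations of \thref{RAS0}.
\item If $\mu=0$, the generators lie on $\ell_{(1,1)}=\ell^{(0,0)}$. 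This falls into the last case of (a), via \thref{RAS1} with $(0,0)\in\mathcal{S}$.
\end{enumerate}
\item If it does not, $\mathfrak{L}=\mathrm{Lie}(\mathcal{S})$ with $\mathcal{S}\subseteq\mathcal{M}\setminus\{(0,0)\}$, and \thref{RAS1} applies directly. Note that a line through $(-1,-1)$ containing $\mathcal{S}$ is exactly the condition $\mathcal{S}\subseteq\ell^{(c,d)}$ for all $(c,d)\in\mathcal{S}$.
\end{enumerate}
The converses are immediate from \eqref{CR1} and \eqref{CR3}.

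\textbf{Part (b).} First observe that $\mathfrak{L}^{(1)}$ is again regular and contains no $\Delta$-component, since $\Delta$ is never a bracket. It also contains no $\partial_{(0,0)}$ except via $[\partial_{(-1,1)},\partial_{(1,-1)}]$, which generates $\mathfrak{sl}_2$ and is therefore excluded by solvability. Hence $\mathfrak{L}^{(1)}=\mathrm{Lie}(\mathcal{S})$ with $\mathcal{S}\subseteq\mathcal{M}\setminus\{(0,0)\}$. Moreover $\mathcal{S}\neq\emptyset$ because $\mathfrak{L}$ is non-abelian, and $\mathfrak{L}$ is solvable iff $\mathrm{Lie}(\mathcal{S})$ is. We then apply part (a) together with \thref{GC2} to $\mathrm{Lie}(\mathcal{S})$:
\begin{enumerate}[\rm(i)]
\item If $\mathrm{Lie}(\mathcal{S})$ is abelian, it is $\mathrm{Lie}(\{(0,-1),(-1,0)\})$ or $\mathcal{S}\subseteq\ell^{(c,d)}$ for all $(c,d)\in\mathcal{S}$. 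In either case $\mathfrak{L}$ has derived length $2$.
\item Otherwise \thref{GC2} gives $(a,b)$ with $a\neq b$ and $\mathcal{S}\setminus\{(a,b)\}\subseteq\underline{\ell}^{(a,b)}$, and $\mathrm{Lie}(\mathcal{S})$ has derived length exactly $2$. Hence $\mathfrak{L}$ has derived length $3$.
\end{enumerate}
The identity $\underline{\ell}^{(a,b)}=\ell^{(c,d)}$ for $(c,d)\in\mathcal{S}\setminus\{(a,b)\}$ follows from \thref{N3}/\thref{P1}. The converse direction is the same chain read backwards.

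\textbf{Main obstacle.} The delicate step is justifying that the derived algebra of a regular solvable $\mathfrak{L}$ has no bidegree-$(0,0)$ part. For this we should use that $\partial_{(0,0)}$ arises only as $[\partial_{(-1,1)},\partial_{(1,-1)}]$ (the pair $\{(-1,0),(0,-1)\}$ gives $\partial_{(-1,-1)}=0$), together with the $\mathfrak{sl}_2$ argument from the proof of \thref{GC2}. We must also state part (a) correctly in the presence of degree-$(0,0)$ elements, which requires some care with the $\ell_{(1,1)}$ case.
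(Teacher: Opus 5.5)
The step that fails is case (i)(1) of part (a). The relation $\lambda(c-d)+\mu(c+d)=0$ only puts $(c,d)$ on the line through $(0,0)$ in direction $(\lambda-\mu,\lambda+\mu)$. The conclusion $(c,d)\in\{(1,-1),(-1,1)\}$ holds only when $\lambda=0$, i.e.\ when the toral element is a multiple of $\Delta$.

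For $\lambda\mu\neq 0$ no reduction to \thref{RAS0} is possible, because (a) is false there. Since $[x\partial/\partial x,\partial_{(c,d)}]=c\,\partial_{(c,d)}$ and $[y\partial/\partial y,\partial_{(c,d)}]=d\,\partial_{(c,d)}$, the element $h_{(c,d)}:=d\,x\partial/\partial x-c\,y\partial/\partial y$ commutes with $\partial_{(c,d)}$. For example, $\kk\partial_{(-1,0)}\oplus\kk(\Delta-\partial_{(0,0)})=\kk\,\partial/\partial x\oplus\kk\,y\partial/\partial y$ is a regular Abelian subalgebra. It is not contained in $\mathrm{Lie}(\{(0,0)\},\Delta)$ or $\mathrm{Lie}(\{(\pm1,\mp1)\},\Delta)$, and it is not $\mathrm{Lie}(\{(0,-1),(-1,0)\})$. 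Nor is it of the form $\mathrm{Lie}(\mathcal{S})$, since those algebras are divergence-free and $y\partial/\partial y$ is not.

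The paper's proof (``follows from \thref{RAS0} and \thref{RAS1}'') has the same blind spot: \thref{RAS0} only treats subalgebras containing $\Delta$, and \thref{RAS1} only those generated by the $\partial_{(a,b)}$. Done correctly, your case analysis gives the following. Suppose the bidegree-$(0,0)$ part of $\mathfrak{L}$ is $\kk h$. Then the other bidegrees lie on the line $\{(c,d)\mid [h,\partial_{(c,d)}]=0\}$ through $(0,0)$. Two points of such a line commute iff $c_1-d_1=c_2-d_2$ (insert $c_1d_2=c_2d_1$ into \eqref{CR1}). Hence either $h\in\kk\partial_{(0,0)}$ and $\mathfrak{L}\subseteq\mathrm{Lie}(\ell_{(1,1)})$, or $\mathfrak{L}\subseteq\kk h_{(c,d)}\oplus\kk\partial_{(c,d)}$ for a single $(c,d)\in\mathcal{M}\setminus\{(0,0)\}$. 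This family, which contains the two $\Delta$-cases, must be added to (a), and that amended statement is what you can prove. Part (b) is unaffected, since there (a) is applied only to $\mathfrak{L}^{(1)}$, which for solvable $\mathfrak{L}$ has no bidegree-$(0,0)$ part.

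Part (b) follows the paper's route: pass to $\mathfrak{L}^{(1)}$, then apply \thref{RAS1} or \thref{GC2}. Your $\mathfrak{sl}_2$ argument is genuinely needed here. The paper justifies $(0,0)\notin\mathcal{S}$ only through the eigenvalues of $\mathrm{ad}\,\partial_{(0,0)}$ and $\mathrm{ad}\,\Delta$, which says nothing about brackets of two non-toral elements.

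However, your list of such brackets is incomplete. The pairs in $\mathcal{M}\setminus\{(0,0)\}$ summing to $(0,0)$ are $\{(1,-1),(-1,1)\}$, $\{(1,0),(-1,0)\}$ and $\{(0,1),(0,-1)\}$. The last two give $[\partial_{(1,0)},\partial_{(-1,0)}]=-2\partial_{(0,0)}$ and $[\partial_{(0,1)},\partial_{(0,-1)}]=2\partial_{(0,0)}$. Each pair spans a copy of $\mathfrak{sl}_2$ together with $\partial_{(0,0)}$, so your conclusion survives once all three are listed.

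Finally, reading the chain backwards in the third case uses \thref{GC2}, which assumes $\mathcal{S}\neq\{(0,-1),(-1,0)\}$. This is not automatic. That $\mathcal{S}$ satisfies the third condition with $(a,b)=(0,-1)$, because $\underline{\ell}^{(0,-1)}=\ell^{(-1,0)}=\{(-1,n)\mid n\in\mathbb{Z}\}$. Yet $\mathrm{Lie}(\mathcal{S})$ is Abelian, so $\mathfrak{L}$ then has derived length $2$, not $3$. The derived-length-$3$ clause therefore needs that exclusion stated explicitly.
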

	\begin{proof}
		{\bf (a) : }Follows from \thref{RAS0} and \thref{RAS1}.
		
		{\bf (b) : }Let $\mathfrak{L}$ be non-abelian and solvable.
		Then by non-commutativity, we have
			\begin{center}
			$\mathfrak{L}^{(1)}=$ Lie($\mathcal{S}$), for some $\mathcal{S}\subseteq \mathcal{M}$ and $\mathcal{S}\neq\emptyset$.
		\end{center} 
		Next, by commutation relations \eqref{CR1} and~\eqref{CR3} respectively, we have
		\begin{center}
			$[\partial_{(0,0)}, \partial_{(c,d)}]=(c-d)\partial_{(c,d)}$
			and 
			$[\Delta, \partial_{(c,d)}]=(c+d)\partial_{(c,d)}$, for all $(c,d)\in \mathcal{M}$.
		\end{center}
		Therefore,
		\begin{center}
		 $\mathcal{S}\subseteq \mathcal{M}\setminus\{(0,0)\}$.
		\end{center} 
		Next if $\mathfrak{L}^{(1)}$ is Abelian, then by statement~(a), we have either $\mathcal{S}=\{(0,-1),(-1,0)\}$ or $\mathcal{S}\subseteq\ell^{(c,d)}$, for all $(c,d)\in \mathcal{S}$.
		
		Otherwise, $\mathfrak{L}^{(1)}$ is non-abelian and solvable.
		Then by \thref{GC2}, there exists $(a,b)\in \mathcal{S}$ such that $a\neq b$, $\mathcal{S}\setminus\{(a,b)\}\neq \emptyset$ and $\mathcal{S}\setminus\{(a,b)\}\subseteq \underline{\ell}^{(a,b)}=\ell^{(c,d)}$, for all $(c,d)\in \mathcal{S}\setminus\{(a,b)\}$.
		
		\smallskip
		Now we prove the converse.
		If $\mathfrak{L}^{(1)}$ = Lie($\mathcal{S}$), for some non-empty subset $\mathcal{S}$ of $ \mathcal{M}\setminus\{(0,0)\}$, then $\mathfrak{L}$ is non-abelian.
		
		If $\mathcal{S}=\{(0,-1),(-1,0)\}$ or $\mathcal{S}\subseteq\ell^{(c,d)}$, for all $(c,d)\in \mathcal{S}$, then by \thref{RAS1}, it follows that $\mathfrak{L}^{(1)}$ is Abelian.
		Hence, $\mathfrak{L}$ is solvable of derived length $2$.
		
		Next, if there exists $(a,b)\in \mathcal{S}$ such that $a\neq b$, $\mathcal{S}\setminus\{(a,b)\}\neq \emptyset$ and $\mathcal{S}\setminus\{(a,b)\}\subseteq \underline{\ell}^{(a,b)}=\ell^{(c,d)}$, for all $(c,d)\in \mathcal{S}\setminus\{(a,b)\}$.
		Then by \thref{GC2}, we know that $\mathfrak{L}^{(1)}$ is solvable of derived length $2$.
		Hence, $\mathfrak{L}$ is solvable of derived length~$3$.
	\end{proof}
\section{Regular Borel subalgebras}\label{RBS}
In this section, we prove our main result (\thref{MT1}) about the description of the regular Borel subalgebras of Lie(Aut($\mathbb{A}^2$)) and later discuss their isomorphism classes in Subsection~\ref{IC}.

\subsection{Main Theorem}\label{MT}
Now we are ready to describe the structure of regular Borel subalgebras explicitly.
\begin{thm}\thlabel{MT1}
	\begin{enumerate}[(1)]
		\item Every regular Borel subalgebra of Lie(Aut($\mathbb{A}^2$)) has derived\\ length~$2$~or~$3$.	
		\item The only metabelian regular Borel subalgebra is  of the following form:
		$$Lie(\Delta,\ell_{(1,1)})
		=Lie(\mathfrak{t}_2,\{(c,c)\mid c\in \ZY\})=
		\kk x\dfrac{\partial}{\partial x}\oplus \kk y\dfrac{\partial}{\partial y}\oplus xy\kk[xy]\bigg(x\dfrac{\partial}{\partial x}-y\dfrac{\partial}{\partial y}\bigg).$$ 
%	Moreover, Lie$(\ell_{(1,1)},\Delta)$= $\mathscr{G}(0,0,(i+1)_i,\pmb{0},\pmb{0})$.
		\item 
		The regular Borel subalgebras of derived length $3$ are of the following form:
		$$
		Lie(\mathfrak{t}_2,\underline{\ell}^{(a,b)},(a,b)), \text{ for some } (a,b)\in \ZZ^2\cup\{(0,-1),(-1,0)\} \text{ with }a\neq b.
		$$
	\end{enumerate}
\end{thm}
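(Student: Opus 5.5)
Let $\mathfrak{B}$ be a regular Borel subalgebra. By \eqref{PE0}, \eqref{PE01} and \eqref{PE1}, $\mathfrak{B}=\text{Lie}(\mathfrak{t}_2,\mathcal{S})$ with $\emptyset\neq\mathcal{S}\subseteq\mathcal{M}\setminus\{(0,0)\}$ and $\mathfrak{B}^{(1)}=\text{Lie}(\mathcal{S})$. In particular $\mathfrak{B}$ is non-abelian, so \thref{GC4}(b) applies. It yields three possibilities for $\mathcal{S}$:
\begin{enumerate}[(i)]
\item $\mathcal{S}=\{(0,-1),(-1,0)\}$;
\item $\mathcal{S}\subseteq\ell^{(c,d)}$ for all $(c,d)\in\mathcal{S}$, i.e.\ $\mathcal{S}$ lies on one line through $(-1,-1)$;
\item $\mathcal{S}\setminus\{(a,b)\}\subseteq\underline{\ell}^{(a,b)}$ for some $(a,b)\in\mathcal{S}$ with $a\neq b$.
\end{enumerate}
Cases (i) and (ii) give derived length $2$ and case (iii) gives derived length $3$. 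This proves (1). Maximality then lets us enlarge $\mathcal{S}$ to the largest set of the same type, while keeping the whole algebra solvable.

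\emph{Case (iii).} Since $a\neq b$ and $\mathcal{S}\setminus\{(a,b)\}\neq\emptyset$, the set $\underline{\ell}^{(a,b)}\cap\mathcal{M}$ is non-empty. Using \thref{N4}, this excludes $(a,b)=(m,-1)$ and $(a,b)=(-1,m)$ with $m\geqslant1$, so $(a,b)\in\ZZ^2\cup\{(0,-1),(-1,0)\}$. Take the candidate $\text{Lie}(\mathfrak{t}_2,\underline{\ell}^{(a,b)},(a,b))$. Its derived algebra is $\text{Lie}(\underline{\ell}^{(a,b)}\cup\{(a,b)\})$. By \thref{GC2} that algebra is solvable of derived length $2$, so the candidate is solvable and contains $\mathfrak{B}$. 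Maximality then forces equality, which gives (3).

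It remains to show that every such candidate really is maximal. Suppose it is properly contained in a solvable subalgebra. By \thref{RBS5} and \thref{RBS2} we may take that subalgebra regular, so it has the form $\text{Lie}(\mathfrak{t}_2,\mathcal{S}')$ with $\mathcal{S}'\supsetneq\underline{\ell}^{(a,b)}\cap\mathcal{M}\cup\{(a,b)\}$. Apply \thref{GC4}(b) to $\mathcal{S}'$. Because $(a,b)\notin\underline{\ell}^{(a,b)}$, options (i) and (ii) are impossible, and a new point can only belong to $\underline{\ell}^{(a,b)}$. The one thing to rule out is that the distinguished point changes to some $(a',b')\neq(a,b)$. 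That would force $(a,b)$ and all of $\underline{\ell}^{(a,b)}\cap\mathcal{M}$ (at least two points) into $\underline{\ell}^{(a',b')}$, which gives $\underline{\ell}^{(a,b)}=\underline{\ell}^{(a',b')}\ni(a,b)$, a contradiction.

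\emph{Cases (i) and (ii).} Here I show that the only maximal option is $\mathcal{S}=\ell_{(1,1)}\cap\mathcal{M}\setminus\{(0,0)\}$.

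In case (i), adding $(1,0)$ keeps the algebra solvable: $\text{Lie}(\{(0,-1),(-1,0),(1,0)\})$ is contained in $\mathfrak{j}_2^-$-type data. So $\mathfrak{B}$ is not maximal. Instead, check that $\text{Lie}(\mathfrak{t}_2,\{(0,-1),(-1,0)\})$ lies in $\mathfrak{j}_2^{+}=\text{Lie}(\mathfrak{t}_2,\ell^{(-1,0)},(0,-1))$, which is a case (iii) algebra.

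In case (ii), $\mathcal{S}\subseteq\ell^{(c,d)}$ for a line through $(-1,-1)$. If this line is not $\ell_{(1,1)}$, pick $(c,d)$ on it with $c\neq d$. Then $\ell^{(c,d)}=\underline{\ell}^{(a,b)}$ for a suitable direction $(a,b)$ with $a\neq b$, for example a primitive direction vector of the line. When $(a,b)\in\mathcal{M}$ this embeds $\mathfrak{B}$ into the derived-length-$3$ algebra $\text{Lie}(\mathfrak{t}_2,\underline{\ell}^{(a,b)},(a,b))$, again contradicting maximality. This yields $\mathcal{S}\subseteq\ell_{(1,1)}$, and maximality gives $\text{Lie}(\mathfrak{t}_2,\{(c,c)\mid c\in\ZY\})$.

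To show this algebra is maximal, suppose $(e,f)$ with $e\neq f$ is added. Then \thref{GC4}(b) would require every point $(c,c)$ with $c\geqslant1$ to lie in $\underline{\ell}^{(e,f)}$. That forces $\underline{\ell}^{(e,f)}=\ell_{(1,1)}$, i.e.\ $e=f$, a contradiction. The explicit formula in (2) follows from $\partial_{(c,c)}=(c+1)x^cy^c(x\partial_x-y\partial_y)$.

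The main obstacle is the embedding step in case (ii). I must show that whenever the line differs from $\ell_{(1,1)}$, some direction vector $(a,b)$ of it can be chosen with $a\neq b$ and $(a,b)\in\mathcal{M}$, and with $(a,b)\notin$ the line. The vector $(a,b)$ can be taken non-negative or of the form $(0,-1)$ or $(-1,0)$ up to sign; this needs a short case check on the slope.
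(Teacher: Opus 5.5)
Your proof is correct and follows the paper's route. The trichotomy from \thref{GC4}(b) (that is, \thref{RAS1} and \thref{GC2}) gives (1). A derived-length-$3$ algebra is absorbed into Lie$(\mathfrak{t}_2,\underline{\ell}^{(a,b)},(a,b))$. In the metabelian case, $\{(0,-1),(-1,0)\}$ goes into $\mathfrak{j}_2^{+}$ and any off-diagonal line goes into a derived-length-$3$ candidate, which leaves Lie$(\Delta,\ell_{(1,1)})$. Your extra check that the candidates in (3) really are maximal is a useful addition that the paper leaves implicit.

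Two small repairs are needed. First, delete the aside about adjoining $(1,0)$ in case (i). It is false, because $\partial_{(-1,0)},\partial_{(0,0)},\partial_{(1,0)}$ span a copy of $\mathfrak{sl}_2$; your subsequent $\mathfrak{j}_2^{+}$ embedding is the right argument. Second, your ``main obstacle'' is immediate: for any $(c,d)\in\mathcal{S}$ put $(a,b)=(c+1,d+1)\in\ZZ^2\setminus\{(0,0)\}$. Then $\underline{\ell}^{(a,b)}=\ell^{(c,d)}$ by definition, and $a\neq b$ exactly when $c\neq d$.
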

\begin{proof}
	Let $\mathfrak{B}$ be a regular Borel subalgebra.
%	Then $\mathfrak{B}^{(1)}$ is solvable.
Then by \eqref{PE0}, \eqref{PE01} and \eqref{PE1} (in Section~\ref{P}), we know that
\begin{center}
	$\mathfrak{B}= \text{Lie}(\mathfrak{t}_2,\mathcal{S}), \text{ for some } \mathcal{S}\subseteq \mathcal{M}\setminus\{(0,0)\},\, \mathcal{S}\neq \emptyset$
%\end{center}
and 
%\begin{center}
 $\mathfrak{B}^{(1)}$ = Lie($\mathcal{S}$) $\neq 0$.
\end{center}
%Therefore, $\mathfrak{B}$ is not Abelian.

\smallskip
\noindent
{\bf (1) : }If $\mathfrak{B}^{(1)}$ is Abelian, then $\mathfrak{B}$ is solvable of derived length $2$.

Otherwise, $\mathfrak{B}^{(1)}$ is a non-abelian and solvable regular subalgebra.
Hence, by \thref{GC2}, $\mathfrak{B}^{(1)}$ is solvable of derived length $2$. 
Therefore, $\mathfrak{B}$ is solvable of derived length $3$.

\smallskip
\noindent
{\bf (3) : }Let $\mathfrak{B}$ be a regular Borel subalgebra of derived length $3$.
Then $\mathfrak{B}^{(1)}$ is solvable of derived length $2$. 
Note that if $(a,b)\in \{(-1,m),(m,-1)\mid m\in \ZY\}$,	then $\underline{\ell}^{(a,b)}\cap \mathcal{M}= \emptyset$.
Therefore, by \thref{GC2}, it follows that 
\begin{center}
$\mathfrak{B}^{(1)}\subseteq $  Lie($\underline{\ell}^{(a,b)},(a,b)$), for some $(a,b)\in \ZZ^2\cup\{(0,-1),(-1,0)\}$ with $a\neq b$.
\end{center}
Now, $\mathfrak{B}$ is Borel; hence, we can conclude that
	$\mathfrak{B}$ =  Lie($\mathfrak{t}_2,\underline{\ell}^{(a,b)},(a,b)$).
	
\smallskip
\noindent
{\bf (2) : }
	Let $\mathfrak{B}$ be a regular Borel subalgebra of derived length $2$.
Then $\mathfrak{B}^{(1)}$ is Abelian.
Therefore, by \thref{RAS1}, it follows that 
\begin{center}
	$\mathfrak{B}^{(1)}\subseteq $  Lie($\{(0,-1),(-1,0)\}$) or
	$\mathfrak{B}^{(1)}\subseteq $  Lie($\underline{\ell}^{(a,b)}$), for some $(a,b)\in \mathcal{M}\setminus\{(0,0)\}$.
\end{center}
%\begin{center}
%or $\mathfrak{B}^{(1)}\subseteq$ Lie$(\Delta, \partial_{(1,-1)})$ or
%	$\mathfrak{B}^{(1)}\subseteq$ Lie$(\Delta, \partial_{(-1,1)})$. 
%\end{center}
Hence,
\begin{center}
	$\mathfrak{B}$ =  
	Lie($\mathfrak{t}_2,\{(0,-1),(-1,0)\}$) or $\mathfrak{B}$ = Lie($\mathfrak{t}_2,\underline{\ell}^{(a,b)}$), for some $(a,b)\in \mathcal{M}\setminus\{(0,0)\}$.
\end{center}
But, 
\begin{center}
Lie($\mathfrak{t}_2,(0,-1),(-1,0)$) $\subseteq$ Lie($\mathfrak{t}_2,(0,-1),\ell^{(-1,0)}$) =
Lie($\mathfrak{t}_2,(0,-1),\underline\ell^{(0,-1)}$),
\end{center}
 a Borel subalgebra by statement (3) and if $a\neq b$, then
\begin{center}
Lie($\mathfrak{t}_2,\underline{\ell}^{(a,b)}$) $\subseteq$ Lie($\mathfrak{t}_2,\underline{\ell}^{(a,b)}, (a,b)$), 	
\end{center}
a Borel subalgebra by statement (3).
Therefore, we are left with Lie($\mathfrak{t}_2,\underline{\ell}^{(a,a)}, (a,a)$), for $a\in \ZY$.
Note that $\underline{\ell}^{(a,a)}=\ell^{(a,a)}=\ell_{(1,1)}$ and $(0,0)\in \ell_{(1,1)}$; therefore, 
\begin{center}
	Lie($\mathfrak{t}_2,\underline{\ell}^{(a,a)}, (a,a)$) =
	Lie($\mathfrak{t}_2,{\ell}^{(a,a)}$) = Lie($\mathfrak{t}_2, \ell_{(1,1)}$) = Lie($\Delta, \ell_{(1,1)}$), for any $a\in \ZY$.
\end{center}
Let $(c,d)\in \mathcal{M}$ with $c\neq d$.
By \thref{P6}, we can conclude that Lie($\{(1,1),(c,d)\}$) is non-solvable; hence,  Lie($\mathfrak{t}_2,{\ell}_{(1,1)},(c,d)$) is non-solvable.
Thus, 
\begin{center}
	$\mathfrak{B}$ = Lie($\Delta, \ell_{(1,1)}$).
\end{center}
\end{proof}
Consider the lower triangular subalgebra $\mathfrak{j}_2^{+}$
and the upper triangular subalgebra $\mathfrak{j}_2^{-}$.
By \cite[Proposition~4.9]{AZ3} we know that $\mathfrak{j}^{\pm}$ are Borel subalgebras of Lie(Aut($\mathbb{A}^2$)).
Note that 
\begin{center}
	$\mathfrak{j}_2^{+}$ = Lie(JONQ$^{+}(\mathbb{A}^2)$) =
	Lie($\mathfrak{t}_2,{\ell}^{(-1,0)}, (0,-1)$) = Lie($\mathfrak{t}_2,\underline{\ell}^{(0,-1)}, (0,-1)$) 
\end{center}
and
\begin{center}
	$\mathfrak{j}_2^{-}$ = Lie(JONQ$^{-}(\mathbb{A}^2)$) = 
	Lie($\mathfrak{t}_2,{\ell}^{(0,-1)}, (-1,0)$) = Lie($\mathfrak{t}_2,\underline{\ell}^{(-1,0)}, (-1,0)$) .
\end{center}
Thus, \thref{MT1} provides an alternative way of proving that $\mathfrak{j}_2^{+}$ and $\mathfrak{j}_2^{-}$ are Borel subalgebras of Lie(Aut($\mathbb{A}^2$)).

Next, we draw the points of $\mathcal{M}$ (by teal ink), $(-1,-1)$ by blue ink and highlight the bidegrees (by red ink) of all the homogeneous derivations contained in $\mathfrak{j}_2^{+}$ and $\mathfrak{j}_2^{-}$ (in Fig~1 and Fig~2 respectively).
% in the first and second diagram respectively.
%\newpage
\begin{figure}[h]
	\begin{center}
		\tikzset{every picture/.style={line width=0.75pt}} %set default line width to 0.75pt        
		
		\begin{tikzpicture}[x=1pt,y=1pt,yscale=-1.25,xscale=1.25]
			\tikzstyle{conefill} = [fill=blue!20, draw = black!40]
			\tikzstyle{conefill_gamma} = [fill=red!20, draw = black!70]
			
			\coordinate (e1) at (20,0);
			\coordinate (e2) at (0,-20);
			
			%%%%%%%%%%%%%%%%% J_2^- %%%%%%%%%%%%
				
			\coordinate (O) at (210,230);
			\coordinate (Oxmax) at ($(O)+5*(e1)$);
			\coordinate (Oymax) at ($(O)+5*(e2)$);
			\coordinate (Ozmax) at ($(O)+5*(e1)-(e2)$);
			\coordinate (Oxmin) at ($(O)-2*(e1)$);
			\coordinate (Oymin) at ($(O)-2*(e2)$);
			\coordinate (Ozmin) at ($(O)-(e2)-2*(e1)$);
			\coordinate (Ozzmin) at ($(O)-(e2)$);
			\coordinate (T1) at ($(O)-3*(e2)+2*(e1)$);
			\coordinate (T2) at ($(O)-3.5*(e2)+2.5*(e1)$);
			
			\draw [->,color=black!90] (Oxmin) -- (Oxmax) node[below] {$x$-axis};
			\draw [->,color=black!90] (Oymin) -- (Oymax) node[right] {$y$-axis};
			\draw [->,color=black!90] (Ozmin) -- (Ozmax) node[right] {};
			\draw [->,color=red!90] (Ozzmin) -- (Ozmax) node[right] {};
			
			\foreach \x in {-1}
			\foreach \y in {-1}
			{
				\node[draw=blue!70,circle,inner sep=2pt,fill=blue!70] at ($(O)+\x*(e1)+\y*(e2)$) {};
			}
			\foreach \x in {-1,0,...,4}
			\foreach \y in {0,...,4}
			{
				\node[draw=teal!70,circle,inner sep=1.5pt,fill=teal!70] at ($(O)+\x*(e1)+\y*(e2)$) {};
			}
			\foreach \x in {0,...,4}
			{
				\node[draw=teal!70,circle,inner sep=1.5pt,fill=teal!70] at ($(O)+\x*(e1)-(e2)$) {};
			}
			\node[below left] at ($(O)-(e1)-(e2)$) {\tiny $(-1,-1)$};
			\node[below right] at ($(O)$) { $(0,0)$};
			\node[below left] at ($(O)-(e1)$) { $(-1,0)$};
			\node[below right] at ($(O)-(e2)$) { $\underline{\ell}^{(-1,0)}\cap \mathcal{M}$};
			\node[draw=red,circle,inner sep=2pt,fill=teal!70] at ($(O)$) {};
			\node[draw=red,circle,inner sep=2pt,fill=teal!70] at ($(O)-(e1)$) {};
			\node[draw=red,circle,inner sep=2pt,fill=teal!70] at ($(O)-(e2)+(e1)$) {};
			\node[draw=red,circle,inner sep=2pt,fill=teal!70] at ($(O)-(e2)+2*(e1)$) {};
			\node[draw=red,circle,inner sep=2pt,fill=teal!70] at ($(O)-(e2)+3*(e1)$) {};
			\node[draw=red,circle,inner sep=1.5pt,fill=teal!70] at ($(O)-(e2)+4*(e1)$) {};
			\node[draw=red,circle,inner sep=2pt,fill=teal!70] at ($(O)-(e2)$) {};
						
			\draw[black] (T1) node {Fig~2 : Upper triangular};
			\draw[black] (T2) node {subalgebra $\mathfrak{j}_2^{-}$};
			
			%%%%%%%%%%%%%%%%% J_2^+ %%%%%%%%%%%%
			
			\coordinate (O) at (20,230);
			\coordinate (Oxmax) at ($(O)+5*(e1)$);
			\coordinate (Oymax) at ($(O)+5*(e2)$);
			\coordinate (Ozmax) at ($(O)-(e1)+5*(e2)$);
			\coordinate (Oxmin) at ($(O)-2*(e1)$);
			\coordinate (Oymin) at ($(O)-2*(e2)$);
			\coordinate (Ozmin) at ($(O)-(e1)-2*(e2)$);
			\coordinate (Ozzmin) at ($(O)-(e1)$);
			\coordinate (T1) at ($(O)-3*(e2)+2*(e1)$);
			\coordinate (T2) at ($(O)-3.5*(e2)+2.5*(e1)$);
			
			\draw [->,color=black!90] (Oxmin) -- (Oxmax) node[below] {$x$-axis};
			\draw [->,color=black!90] (Oymin) -- (Oymax) node[right] {$y$-axis};
			\draw [->,color=black!90] (Ozmin) -- (Ozmax) node[right] {};
			\draw [->,color=red!90] (Ozzmin) -- (Ozmax) node[right] {};
			
			\foreach \x in {-1}
			\foreach \y in {-1}
			{
				\node[draw=blue!70,circle,inner sep=2pt,fill=blue!70] at ($(O)+\x*(e1)+\y*(e2)$) {};
			}
			\foreach \x in {-1,0,...,4}
			\foreach \y in {0,...,4}
			{
				\node[draw=teal!70,circle,inner sep=1.5pt,fill=teal!70] at ($(O)+\x*(e1)+\y*(e2)$) {};
			}
			\foreach \x in {0,...,4}
			{
				\node[draw=teal!70,circle,inner sep=1.5pt,fill=teal!70] at ($(O)+\x*(e1)-(e2)$) {};
			}
			\node[below left] at ($(O)-(e1)-(e2)$) {\tiny $(-1,-1)$};
			\node[below right] at ($(O)$) { $(0,0)$};
			\node[below left] at ($(O)-(e1)$) { $\underline{\ell}^{(0,-1)}\cap \mathcal{M}$};
			\node[below right] at ($(O)-(e2)$) { $(0,-1)$};
			\node[draw=red,circle,inner sep=2pt,fill=teal!70] at ($(O)$) {};
			\node[draw=red,circle,inner sep=2pt,fill=teal!70] at ($(O)-(e1)$) {};
			\node[draw=red,circle,inner sep=2pt,fill=teal!70] at ($(O)-(e1)+(e2)$) {};
			\node[draw=red,circle,inner sep=2pt,fill=teal!70] at ($(O)-(e1)+2*(e2)$) {};
			\node[draw=red,circle,inner sep=2pt,fill=teal!70] at ($(O)-(e1)+3*(e2)$) {};
			\node[draw=red,circle,inner sep=2pt,fill=teal!70] at ($(O)-(e1)+4*(e2)$) {};
			\node[draw=red,circle,inner sep=2pt,fill=teal!70] at ($(O)-(e2)$) {};
		
			\draw[black] (T1) node {Fig~1 : Lower triangular};
			\draw[black] (T2) node {subalgebra $\mathfrak{j}_2^{+}$};
			
		\end{tikzpicture}
	\end{center}
\end{figure}

%Thus we have an alternate way of showing that $\mathfrak{j}_2^{+}$ and $\mathfrak{j}_2^{-}$ are Borel subalgebras of Lie(Aut($\mathbb{A}^2$)) (One proof is given in \cite[Proposition~4.9]{AZ3}).
%\noindent
We know that $\Delta,\partial_{(0,0)}$ are contained in every regular Borel subalgebra of Lie(Aut($\mathbb{A}^2$)).
Consider $(a,b)\in \mathcal{M}\setminus\{(0,0)\}$.
Below we write down all the regular Borel subalgebras containing $\partial_{(a,b)}$.
\begin{enumerate}[\rm(a)]
	
	\item  If $a=b$, then the only regular Borel subalgebra containing $\partial_{(a,a)}$ is Lie($\Delta,\ell_{(1,1)}$).
	
	\item  If $(a,b)\in \ZZ^2\cup\{(0,-1),(-1,0)\}$ with $a\neq b$, then there are countably infinite number of regular Borel subalgebras  containing $\partial_{(a,b)}$; precisely, the following:
	\begin{center}
		Lie($\mathfrak{t}_2,\underline{\ell}^{(a,b)},(a,b)$) and Lie($\mathfrak{t}_2, \ell^{(a,b)}, (c,d)$), for any $(c,d)\in \widetilde{\ell}_{(a,b)}\cap \mathcal{M}\setminus\{(0,0)\}$.
	\end{center}
	\item Let $m\in \ZY$.
	Then there are countably many regular Borel subalgebras  containing $\partial_{(m,-1)}$; precisely, the following:
	$$\begin{array}{lll}
		& &\text{Lie}(\mathfrak{t}_2, \ell^{(m,-1)}, (c,d)), \text{ for any } (c,d)\in \widetilde{\ell}_{(m,-1)}\cap \mathcal{M}\setminus\{(0,0)\}\\ &=&
		\text{Lie}(\mathfrak{t}_2, \ell^{(0,-1)}, (c,0)), \text{ for any } c\in \ZY\cup\{-1\}.	
	\end{array}$$
	Similarly, there are countably many regular Borel subalgebras  containing $\partial_{(-1,m)}$.
	Precisely the following:
	$$\begin{array}{lll}
		%	& &\text{Lie}(\mathfrak{t}_2, \ell^{(m,-1)}, (c,d)), \text{ for any } (c,d)\in \widetilde{\ell}_{(m,-1)}\cap \mathcal{M}\\
		%	&=& 
		\text{Lie}(\mathfrak{t}_2, \ell^{(-1,0)}, (0,d)), \text{ for any } d\in \ZY\cup\{-1\}.	
	\end{array}$$
	%	}
	\end{enumerate}
\begin{rem}\thlabel{MT3}
{\rm
	Note that for any $(c,d)\in \ZZ\setminus\{(0,0)\}$, $\partial_{(c,d)}$ is a non-locally finite homogeneous derivation.
	Hence, by \cite[Theorem~3.4]{AZ3}, the regular Borel subalgebras Lie($\Delta, \ell_{(1,1)}$), Lie($\mathfrak{t}_2,\underline{\ell}^{(a,b)},(a,b)$), with $(a,b)\in \ZZ$ and $a\neq b$ does not correspond to any Borel subgroup of Aut($\mathbb{A}^2$).
}	
\end{rem}
	
Next, we note down a result concerning the non-regular Borel subalgebras.
%By \cite[Corollary~2.3]{AZ3}, we know that every derivation of Lie(Aut($\mathbb{A}^2$)) is contained in a Borel subalgebra of Lie(Aut($\mathbb{A}^2$)). 
%  Concerning the non-regular Borel subalgebraswe note down an observation below.
\begin{lem}\thlabel{MT2}
	Suppose $\delta=\underset{(a,b)\in \mathcal{S}}{\sum}\delta_{(a,b)}$ is the weight decomposition of~$\delta\in$ Lie(Aut($\mathbb{A}^2$)), where $\mathcal{S}\subseteq \mathcal{M}$ and  $\delta_{(a,b)}$ is the non-zero homogeneous component of bidegree $(a,b)$.
	
	If there exist $(c,d), (e,f)\in \mathcal{S}\setminus\{(0,0)\}$ such that $(c,d)\notin {\ell}^{(e,f)}$, $(c,d)\notin \underline{\ell}^{(e,f)}$ and $(e,f)\notin \underline{\ell}^{(c,d)}$.
	Then every Borel subalgebra 
	containing $\delta$ is non-regular.
\end{lem}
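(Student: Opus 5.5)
We argue by contradiction. Suppose $\mathfrak{B}$ is a regular Borel subalgebra of Lie(Aut($\mathbb{A}^2$)) with $\delta\in\mathfrak{B}$. Regularity means $\mathfrak{B}$ is generated by homogeneous derivations, so $\mathfrak{B}$ is a graded subspace and contains every homogeneous component of each of its elements. A cleaner route to the same conclusion: by \thref{RBS5} and \thref{RBS3}, $\mathfrak{B}$ contains $\mathfrak{t}_2$, and then the argument of \thref{RBS2} (repeated use of \thref{RBS1} with ad$(x\partial/\partial x)$ and ad$(y\partial/\partial y)$) shows that $\mathfrak{B}$ contains all homogeneous components of $\delta$. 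Either way, $\delta_{(c,d)},\delta_{(e,f)}\in\mathfrak{B}$.

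Next we identify these components. Since $(c,d),(e,f)\neq(0,0)$, the discussion in Section~\ref{P} shows that the homogeneous elements of Lie(Aut($\mathbb{A}^2$)) of these bidegrees are scalar multiples of $\partial_{(c,d)}$ and $\partial_{(e,f)}$. The components are non-zero, so $\partial_{(c,d)},\partial_{(e,f)}\in\mathfrak{B}$, and hence Lie$(\{(c,d),(e,f)\})\subseteq\mathfrak{B}$.

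The three hypotheses $(c,d)\notin\ell^{(e,f)}$, $(c,d)\notin\underline{\ell}^{(e,f)}$ and $(e,f)\notin\underline{\ell}^{(c,d)}$ are exactly the hypotheses of \thref{LNS1}, applied with $(a,b)=(c,d)$ and $(c,d)=(e,f)$. Since both points lie in $\mathcal{M}\setminus\{(0,0)\}$, \thref{LNS1} says Lie$(\{(c,d),(e,f)\})$ is non-solvable. A subalgebra of a solvable Lie algebra is solvable, so $\mathfrak{B}$ would be non-solvable, which is a contradiction. Hence every Borel subalgebra containing $\delta$ is non-regular.

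The only point needing care is the first step, namely that regularity forces the homogeneous components of $\delta$ into $\mathfrak{B}$. I would rely on $\mathfrak{t}_2\subseteq\mathfrak{B}$ together with \thref{RBS2}'s eigenvector argument rather than on the definition of regularity alone. The non-solvability input is entirely supplied by \thref{LNS1}.
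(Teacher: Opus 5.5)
Your proof is correct and is essentially the paper's argument: any regular subalgebra containing $\delta$ contains $\partial_{(c,d)}$ and $\partial_{(e,f)}$, and \thref{LNS1}, with exactly the substitution you describe, makes it non-solvable. Your first justification already suffices on its own: a subalgebra generated by homogeneous elements is spanned by homogeneous iterated brackets, hence is graded. The $\mathfrak{t}_2$ route in fact uses this same fact inside the proof of \thref{RBS3}. The paper's additional appeal to the existence of a Borel subalgebra containing $\delta$ only serves to make the statement non-vacuous.
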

\begin{proof}
	Let $\mathfrak{L}$ be a regular subalgebra of Lie(Aut($\mathbb{A}^2$)) containing $\delta$.
	Then $\partial_{(c,d)},\partial_{(e,f)}\in~\mathfrak{L}.$
	Now by \thref{LNS1}, we can conclude that Lie($\{(c,d),(e,f)\}$) is non-solvable; hence, $\mathfrak{L}$ is non-solvable.
	
	By \cite[Corollary~2.3]{AZ3}, we know that every derivation of Lie(Aut($\mathbb{A}^2$)) is contained in a Borel subalgebra of Lie(Aut($\mathbb{A}^2$)).
	Let $\mathfrak{B}$ be a Borel subalgebra of Lie(Aut($\mathbb{A}^2$)) containing $\delta$.
	Then  $\mathfrak{B}$ is a non-regular Borel subalgebra of Lie(Aut($\mathbb{A}^2$)).
\end{proof}
%\begin{rem}
%	{\rm
	Consider the derivation $\partial_{(0,1)}+\partial_{(1,0)}$.
	Note that here, $\mathcal{S}=\{(0,1),(1,0)\}$ and $(0,1)\notin~\ell^{(1,0)}$, $(0,1)\notin \underline{\ell}^{(1,0)}$, $(1,0)\notin \underline{\ell}^{(0,1)}$.
	Therefore, by the above result (\thref{MT2}) every Borel subalgebra 
	containing $\partial_{(0,1)}+\partial_{(1,0)}$ is non-regular.
	%	}
%\end{rem}

\subsection{Isomorphism classes}\label{IC}
In this subsection, we discuss the isomorphism classes of the regular Borel subalgebras.
%The first result 
% and give a complete list of non-isomorphic regular Borel subalgebras (\thref{Rm6}).
%At first we state a lemma about the cardinality of the generating set of the regular Borel subalgebras.
%At first we describe some abstract Lie algebras and discuss there isomorphism classes.
%
%Let $n\in \ZZ\cup\{-1\}$ and $\lambda=(\lambda_i), \alpha=(\alpha_i), \gamma=(\gamma_i)\in$ Map($\ZY,\kk$).
%We define a Lie algebra structure $\mathscr{G}(n,\lambda,\alpha,\gamma)$ on the vector space 
%$V=\kk Y_1\oplus \kk Y_2\oplus(\overunderset{\infty}{i=0}{\oplus}\kk X_i)$ as follows:
%\begin{center}
%	$[Y_1,X_0]=X_0=[Y_2,X_0]$, $[Y_1,X_i]=\lambda_i X_i$, $[Y_2,X_i]=\alpha_i X_i$, $[X_0,X_i]=\gamma_i X_{i+n}$, for all $i\geqslant 1$. 
%\end{center}
At first, we observe that every regular Borel subalgebra of Lie(Aut($\mathbb{A}^2$)) is of the form  $\mathscr{G}(n,\mu,\pmb{\lambda},\pmb{\alpha},\pmb{\gamma})$,
for some $n\in \ZZ\cup\{-1\}$, $\mu\in \{0,1\}$ and $\pmb{\lambda}, 
\pmb{\alpha}, \pmb{\gamma}\in$ Map($\ZY,\kk$) (for notation refer to Section~\ref{N}, notation 21) as stated in the structure theorem, in Section~\ref{I}.
\begin{lem}\thlabel{IC0}
	\begin{enumerate}[\rm(a)]
		\item Lie$(\Delta,\ell_{(1,1)})$= $\mathscr{G}(0,0,(i+1)_i,\pmb{0},\pmb{0})$.
		\item 
%		$\mathfrak{j}_2^{\pm}$ 
%		(or $\mathfrak{j}_2^{-}$)=
Lie$(\mathfrak{t}_2,\underline{\ell}^{(0,-1)}, (0,-1))$ (or Lie$(\mathfrak{t}_2,\underline{\ell}^{(-1,0)}, (-1,0))$)
%		(or Lie$(\mathfrak{t}_2,\underline{\ell}^{(-1,0)}, (-1,0))$)
$= \mathscr{G}(-1,1,(2-i)_i,(-i)_i,(i-1)_i).$
		\item Lie$(\mathfrak{t}_2,\underline{\ell}^{(a,b)},(a,b))=$
			$\mathscr{G}(\gcd(a,b),1,\pmb{\lambda},\pmb{\alpha},\pmb{\gamma})$, with $(a,b)\in \ZZ^2$ with $a\neq b$, where $$\lambda_i=\frac{p+q}{a+b}+\frac{i-1}{\gcd(a,b)},\, 
			\alpha_i=\frac{p-q}{a-b}+\frac{i-1}{\gcd(a,b)},\,
			\gamma_i= (p-q)+\frac{(i-1)(a-b)}{\gcd(a,b)}
			%\frac{1}{d}\begin{vmatrix}
				%d(p_1+1)+(i-1)a & a+1\\
				%d(q_1+1)+(i-1)b& b+1\\
			%\end{vmatrix}
			$$
			and $(p,q)$ is the  minimum element of $\underline{\ell}^{(a,b)}\cap \mathcal{M}$ with respect to the lexicographic ordering.
%			 prioritizing the first component.	
%		\end{center}
	\end{enumerate}
\end{lem}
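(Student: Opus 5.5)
The plan is to exhibit, in each case, an explicit basis $Y_1,Y_2,X_0,X_1,\dots$ of the regular Borel subalgebra. We take $Y_1,Y_2\in\mathfrak{t}_2=\kk\Delta\oplus\kk\partial_{(0,0)}$, let $X_0$ be the distinguished generator, and let the $X_i$ (suitably rescaled) run over the ray of bidegrees. The brackets then come from \eqref{CR1} and \eqref{CR3}. First we check that the displayed vectors span the whole subalgebra. By \thref{MT1} and \thref{P6}, $\mathrm{Lie}(\{(a,b),(c,d)\})$ is the direct sum of $\kk\partial_{(a,b)}$ and the $\kk\partial_{(e,f)}$ for $(e,f)$ on the ray $\underline{\ell}^{(a,b)}\cap\mathcal{M}$. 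Adding $\mathfrak{t}_2$ keeps this span closed, since $\mathfrak{t}_2$ acts diagonally. Remark~\ref{N2} parametrises the ray as $(p,q)+\tfrac{i-1}{g}(a,b)$, $i\geqslant 1$, with $g=\gcd(a,b)$. We then read off the structure constants, using that $\Delta$ and $\partial_{(0,0)}$ act on $\partial_{(c,d)}$ by $c+d$ and $c-d$ respectively.

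For (a), take $X_0=\partial_{(1,1)}$, $X_i=\partial_{(i+1,i+1)}$, $Y_1=\tfrac12\Delta$ and $Y_2=\partial_{(0,0)}$. Then $[Y_1,X_0]=X_0$ and $[Y_1,X_i]=(i+1)X_i$. Also $Y_2$ kills everything, so $\mu=0$ and $\pmb\alpha=\pmb 0$. Finally $[X_0,X_i]=0$ by \thref{P3}(a), because all these points lie on $\ell_{(1,1)}=\ell^{(c,c)}$; hence $\pmb\gamma=\pmb 0$, and we may take $n=0$.

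For (c), take $X_0=\partial_{(a,b)}$ and $X_i=\partial_{(p_i,q_i)}$ with $(p_i,q_i)=(p,q)+\tfrac{i-1}{g}(a,b)$. Since $a\neq b$ and $(a,b)\in\ZZ^2$, both $a+b$ and $a-b$ are nonzero. So we set $Y_1=\Delta/(a+b)$ and $Y_2=\partial_{(0,0)}/(a-b)$; these still span $\mathfrak{t}_2$. We then get $[Y_i,X_0]=X_0$, so $\mu=1$, and the eigenvalues on $X_i$ are exactly the stated $\lambda_i,\alpha_i$. For the last bracket, \eqref{CR1} gives
\[
[X_0,X_i]=\big((p_i+1)(b+1)-(q_i+1)(a+1)\big)\,\partial_{(a+p_i,\,b+q_i)} .
\]
Here $(a+p_i,b+q_i)=(p_{i+g},q_{i+g})$, so the shift is $n=g$. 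To evaluate the coefficient, expand it and use the defining relation $a(q+1)=b(p+1)$ of $\underline{\ell}^{(a,b)}$. This leaves $(p-q)+\tfrac{(i-1)(a-b)}{g}=\gamma_i$. Nonzero generators never bracket to $\partial_{(-1,-1)}$ here, because all points involved lie in $\ZZ^2$.

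For (b), use $X_0=\partial_{(0,-1)}$ and $X_i=c_i\partial_{(-1,i-1)}$, with $Y_1=-\Delta$ and $Y_2=\partial_{(0,0)}$. The eigenvalues are $1,1$ on $X_0$ and $2-i,\,-i$ on $X_i$, matching $\mu=1$, $\lambda_i=2-i$ and $\alpha_i=-i$. Next, \eqref{CR1} gives $[\partial_{(0,-1)},\partial_{(-1,i-1)}]=-i\,\partial_{(-1,i-2)}$. For $i=1$ the right-hand side is $\partial_{(-1,-1)}=0$, matching $\gamma_1=0$. For $i\geqslant2$ we choose the scalars recursively by $c_i=-\tfrac{i-1}{i}c_{i-1}$, which are all nonzero; this achieves $\gamma_i=i-1$ with $n=-1$. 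The case $\mathrm{Lie}(\mathfrak{t}_2,\underline{\ell}^{(-1,0)},(-1,0))$ follows by the symmetry $x\leftrightarrow y$, which swaps the two triangular algebras and preserves $\mathfrak{t}_2$.

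The main obstacle is the bookkeeping in (c). We have to verify that the index shift is exactly $\gcd(a,b)$, which uses Remark~\ref{N1}/\ref{N2} to identify $(a,b)$ with $g$ steps along the ray. We also have to keep the sign convention of \eqref{CR1} straight so that $\gamma_i$ comes out with the stated sign, rather than its negative. The remaining computations are routine.
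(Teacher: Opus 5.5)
Your proposal is correct and follows the paper's proof essentially verbatim. You use the same bases ($Y_1=\tfrac{1}{2}\Delta$, $Y_2=\partial_{(0,0)}$ in (a); $Y_1=\tfrac{1}{a+b}\Delta$, $Y_2=\tfrac{1}{a-b}\partial_{(0,0)}$, $X_0=\partial_{(a,b)}$ and $X_i$ running along the ray from $(p,q)$ in (c)). In (b), your recursive scalars $c_i$ and the $x\leftrightarrow y$ swap are equivalent to the paper's explicit choices $X_0=-\partial_{(0,-1)}$, $X_i=\tfrac{1}{i}\partial_{(-1,i-1)}$ and $X_0=\partial_{(-1,0)}$, $X_i=-\tfrac{1}{i}\partial_{(i-1,-1)}$.

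There is one harmless slip in (c): the ray $\underline{\ell}^{(a,b)}\cap\mathcal{M}$ need not lie in $\ZZ^2$ (for $(a,b)=(n,0)$ it is $\{(m,-1)\mid m\in\ZZ\}$), so your stated reason for never hitting $\partial_{(-1,-1)}$ is wrong. The remark is unnecessary anyway, since $(a+p_i,b+q_i)=(p_{i+g},q_{i+g})$ lies on the ray in $\mathcal{M}$ by construction.
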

\begin{proof}
{\bf (a) : }Putting $Y_1=\frac{1}{2}\Delta$, $Y_2=\partial_{(0,0)}$ and $X_i=\partial_{(i+1,i+1)}$, for all $i\in \ZZ$ in the definition of $\mathscr{G}$ (Section~\ref{N}, notation 21) and with the help
commutation relations \eqref{CR1}~and~\eqref{CR3}, we get our result.

\smallskip
{\bf (b) : } Putting $Y_1=-\Delta$, $Y_2=\partial_{(0,0)}$, $X_0=-\partial_{(0,-1)}$ and $X_i=(1/i)\partial_{(-1,i-1)}$, for all $i\in \ZY$ in the definition of $\mathscr{G}$ (Section~\ref{N}, notation 21) and with the help
commutation relations \eqref{CR1} and \eqref{CR3}, we get that \begin{center}
	Lie$(\mathfrak{t}_2,\underline\ell^{(0,-1)}, (0,-1))$ = Lie$(\mathfrak{t}_2,\ell^{(-1,0)}, (0,-1))$ =
$ \mathscr{G}(-1,1,(2-i)_i,(-i)_i,(i-1)_i).$
\end{center}

Similarly, by putting $Y_1=-\Delta$, $Y_2=-\partial_{(0,0)}$, $X_0=\partial_{(-1,0)}$ and $X_i=(-1/i)\partial_{(i-1,-1)}$, for all $i\in \ZY$ in the definition of $\mathscr{G}$ (Section~\ref{N}, notation 21) and with the help
commutation relations \eqref{CR1} and \eqref{CR3}, we get that \begin{center}
	Lie$(\mathfrak{t}_2,\underline\ell^{(-1,0)}, (-1,0))$ = Lie$(\mathfrak{t}_2,\ell^{(0,-1)}, (-1,0))$ =
$ \mathscr{G}(-1,1,(2-i)_i,(-i)_i,(i-1)_i).$
\end{center}

\smallskip
{\bf (c) : }
Let $d=\gcd(a,b)$.
	By \thref{N2}, there exists a minimum element $(p,q)$ of $\underline{\ell}^{(a,b)}\cap \mathcal{M}$ with respect to the lexicographic ordering and 
$$\underline{\ell}^{(a,b)}\cap \mathcal{M}=\{\bigg(p+\dfrac{ia}{d},q+\dfrac{ib}{d}\bigg)\mid i\in \ZZ\}.$$
Let $X_0=\partial_{(a,b)}$ and $X_{i+1}=\partial_{(p+{ia}/{d},q+{ib}/{d})}$, for all $i\in \ZZ$.
Then, by commutation relation \eqref{CR1}, we have
%the commutation relations in Section~\ref{S0}, 
%it follows that
$$[X_0,X_i]=\begin{vmatrix}
	p+\frac{(i-1)a}{d}+1 & a+1\\
	q+\frac{(i-1)b}{d}+1 & b+1\\
\end{vmatrix}X_{i+d}=\bigg((p-q)+\frac{(i-1)(a-b)}{d}\bigg)X_{i+d}, \text{ by }(p,q)\in \underline{\ell}^{(a,b)}.$$
Next, putting $Y_1=\frac{1}{a+b}\Delta$, $Y_2=\frac{1}{a-b}\partial_{(0,0)}$ and $X_i,$ for all $i\in \ZZ$ (as defined above) in the definition of $\mathscr{G}$ (Section~\ref{N}, notation 21) and with the help
commutation relations \eqref{CR1}~and~\eqref{CR3}, we get that
\begin{center}
	Lie($\mathfrak{t}_2,\underline\ell^{(a,b)}, (a,b)$)	
	%Lie($\mathfrak{t}_2,\ell^{(0,-1)}, (n,0)$) 
	= $\mathscr{G}\big(d,1,(\lambda)_i, (\alpha)_i, (\gamma)_i\big)$, 
\end{center}
where $$\lambda_i=\frac{p+q}{a+b}+\frac{i-1}{d},\, 
\alpha_i=\frac{p-q}{a-b}+\frac{i-1}{d},\,
\gamma_i=(p-q)+\frac{(i-1)(a-b)}{d}.$$
\end{proof}
%Now we turn our attention towards the regular Borel subalgebras of Lie(Aut($\mathbb{A}^2$)).
By \thref{MT1}, Lie($\Delta$, $\ell_{(1,1)}$) is of derived length $2$ and all other regular Borel subalgebras are of derived length $3$.
Therefore, Lie($\Delta$, $\ell_{(1,1)}$) is not isomorphic to any other regular Borel subalgebra.
So we need to classify 
%the isomorphism class of 
all the regular Borel subalgebras of derived length $3$, up to isomorphism.
%Moreover, Lie$(\mathfrak{t}_2,\underline{\ell}^{(0,-1)}, (0,-1))$, Lie$(\mathfrak{t}_2,\underline{\ell}^{(-1,0)}, (-1,0))$ are infinitely generated and isomorphic (cf. \thref{IC0}(b)).
%So we need to classify all the Lie$(\mathfrak{t}_2,\underline{\ell}^{(a,b)},(a,b))$
%with $(a,b)\in \ZZ^2$ with $a\neq b$.

At first, we prove a result about isomorphism classes of $\mathscr{G}(n,1,\pmb{\lambda},\pmb{\alpha},\pmb{\gamma})$.
%
% 

% From the commutation relations in Section~\ref{S0} it follows that
% \begin{center}
% 	Lie($\mathfrak{t}_2,\underline\ell^{(n,0)}, (n,0)$)=	Lie($\mathfrak{t}_2,\ell^{(0,-1)}, (n,0)$) $\cong\mathscr{G}\big(n,(\frac{i-2}{n})_i, (\frac{i}{n})_i, (n+i)_i\big)$,  $n\in \ZY\cup\{-1\}$.
% \end{center}
\begin{lem}\thlabel{IC1}
	Let $m,n\in \ZY\cup\{-1\}$ and $\pmb\lambda=(\lambda_i)_i,\, \pmb\mu=(\mu_i)_i,\, \pmb\alpha=(\alpha_i)_i,\, \pmb\beta=(\beta_i)_i,\\ \pmb\gamma=(\gamma_i)_i,\, \pmb\epsilon=(\epsilon_i)_i \in$ Map$(\ZY,\kk)$.
Then the following statements hold:
\begin{enumerate}[\rm(a)]
	\item if $m\neq n$ then $\mathscr{G}(m,1,\pmb\mu,\pmb\beta,\pmb\epsilon)$ and $ \mathscr{G}(n,1,\pmb\lambda,\pmb\alpha,\pmb\gamma)$ are not graded isomorphic.
	\item if $m=n$, $\gamma_i,\epsilon_i\in \kk^{*}$, $\lambda_i\neq \alpha_i$, $\mu_i\neq \beta_i$; for all $i\in \ZY$ and $\bigg(\dfrac{\lambda_i-\beta_i}{\mu_i-\beta_i}\bigg),\bigg(\dfrac{\mu_i-\alpha_i}{\mu_i-\beta_i}\bigg)$ does not depend on $i$, then $\mathscr{G}(m,1,\pmb\mu,\pmb\beta,\pmb\epsilon)\cong \mathscr{G}(n,1,\pmb\lambda,\pmb\alpha,\pmb\gamma)$.
\end{enumerate}	
\end{lem}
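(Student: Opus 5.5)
For (b) the plan is to write down an explicit isomorphism that is diagonal on the $X_i$ and mixes only $Y_1,Y_2$. For (a) the plan is to find a grading-compatible invariant that records the shift $n$.

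\textbf{Part (a).} I will use the natural $\Z$-grading on $\mathscr{G}(n,1,\pmb\lambda,\pmb\alpha,\pmb\gamma)$: $\deg Y_1=\deg Y_2=0$, $\deg X_0=n$ and $\deg X_i=i$ for $i\in\ZY$. This is the grading under which $[X_0,X_i]=\gamma_iX_{i+n}$ is homogeneous.

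First I will single out an intrinsic piece of structure. Let $A=\bigoplus_{i\geqslant 1}\kk X_i$. This is an abelian subalgebra, normalized by $Y_1$, $Y_2$ and $X_0$. The derived algebra is $\mathscr{G}^{(1)}=\kk X_0\oplus A$, possibly up to the finitely many $X_i$ killed when some $\lambda_i=\alpha_i=0$ and $\gamma_{i-n}=0$. The only non-zero brackets inside $\mathscr{G}^{(1)}$ are those with $X_0$. Consequently $\mathscr{G}^{(1)}/A$ is one-dimensional, spanned by the class of $X_0$, and $\operatorname{ad}X_0$ acts on $A$ as a weighted shift of degree $n$.

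Now suppose $\varphi\colon\mathscr{G}(m,1,\pmb\mu,\pmb\beta,\pmb\epsilon)\to\mathscr{G}(n,1,\pmb\lambda,\pmb\alpha,\pmb\gamma)$ is a graded isomorphism. Then $\varphi$ preserves derived algebras, and I claim it carries $A$ onto $A$: in each algebra $A$ is the unique codimension-one abelian ideal of $\mathscr{G}^{(1)}$ on which $\operatorname{ad}\mathscr{G}^{(1)}$ is a shift, and I will check this from the brackets. Hence $\varphi(X_0)=cX_0+a$ with $c\neq0$ and $a\in A$.

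Since $\varphi$ is graded, $\varphi(X_0)$ is homogeneous of degree $m$. Comparing with $\deg X_0=n$ forces $m=n$, unless $c=0$, which is impossible. As a cross-check, for $m\neq n$ one can compare the degrees of $\operatorname{ad}\varphi(X_0)\vert_A$, which are $m$ and $n$ respectively.

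\textbf{Part (b).} I will look for $\varphi$ of the form
\[
\varphi(X_0)=X_0,\quad \varphi(X_i)=c_iX_i,\quad \varphi(Y_1)=sY_1+tY_2,\quad \varphi(Y_2)=uY_1+vY_2,
\]
with scalars $s,t,u,v$ independent of $i$ and with $c_i\in\kk^*$. The relations to be preserved are as follows.
\begin{itemize}
\item On $X_0$: $s+t=1$ and $u+v=1$.
\item On $X_i$: $s\lambda_i+t\alpha_i=\mu_i$ and $u\lambda_i+v\alpha_i=\beta_i$ for all $i$.
\item On $[X_0,X_i]$: $c_i\gamma_i=\epsilon_i c_{i+n}$, with the convention $c_0=1$.
\end{itemize}

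Substituting $t=1-s$ and $v=1-u$ turns the $X_i$-conditions into
\[
s=\frac{\mu_i-\alpha_i}{\lambda_i-\alpha_i},\qquad u=\frac{\beta_i-\alpha_i}{\lambda_i-\alpha_i}.
\]
This is where the hypothesis $\lambda_i\neq\alpha_i$ enters.

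The main obstacle is to show that these two quotients are independent of $i$, using the assumption that $\frac{\lambda_i-\beta_i}{\mu_i-\beta_i}$ and $\frac{\mu_i-\alpha_i}{\mu_i-\beta_i}$ are constant in $i$. Call these constants $P$ and $Q$. Writing $\lambda_i-\alpha_i=(\lambda_i-\beta_i)+(\mu_i-\alpha_i)-(\mu_i-\beta_i)$ and dividing by $\mu_i-\beta_i$ (nonzero by hypothesis) gives
\[
\frac{\lambda_i-\alpha_i}{\mu_i-\beta_i}=P+Q-1,
\]
which is constant. Therefore $s=Q/(P+Q-1)$ is constant. Similarly $u=(Q-1)/(P+Q-1)$ is constant, using $\beta_i-\alpha_i=(\mu_i-\alpha_i)-(\mu_i-\beta_i)$. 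I also need $\det\begin{pmatrix}s&t\\u&v\end{pmatrix}=s-u\neq0$. This holds because
\[
s-u=\frac{\mu_i-\beta_i}{\lambda_i-\alpha_i}\neq0.
\]

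Finally I solve the recursion for the $c_i$, which is possible because all $\gamma_i,\epsilon_i$ are nonzero.
\begin{itemize}
\item For $n\geqslant1$: choose $c_1,\dots,c_n$ arbitrary in $\kk^*$ and set $c_{i+n}=c_i\gamma_i/\epsilon_i$.
\item For $n=-1$: set $c_0=1$ and $c_i=c_{i-1}\epsilon_i/\gamma_i$. In this case the relation at $i=1$ reads $[X_0,X_1]=\gamma_1X_0$, and it is consistent with $\varphi(X_0)=X_0$.
\end{itemize}
In either case $\varphi$ is a bijective linear map that preserves all the defining brackets, hence it is the desired Lie algebra isomorphism.
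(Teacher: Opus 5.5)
Part (b) is correct and is the paper's own argument. Your map goes from the $(\pmb\mu,\pmb\beta,\pmb\epsilon)$-algebra to the $(\pmb\lambda,\pmb\alpha,\pmb\gamma)$-algebra, with $s=\frac{\mu_i-\alpha_i}{\lambda_i-\alpha_i}=b_{11}$ and $u=\frac{\beta_i-\alpha_i}{\lambda_i-\alpha_i}=1-b_{22}$; this is exactly the paper's $\varphi$. Your recursion for the $c_i$ is what the paper means by extending $W_i\mapsto X_i$ ($0\leqslant i\leqslant n$) through the brackets. Your check that $s,u$ are independent of $i$, and your separate treatment of $n=-1$, supply details the paper leaves implicit.

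Part (a), however, breaks down when $m$ or $n$ equals $-1$. Nothing is assumed about $\pmb\gamma$ in (a), and for $n=-1$ the relation with $i=1$ reads $[X_0,X_1]=\gamma_1X_0$.
\begin{itemize}
\item With your degrees, the left side has degree $\deg X_0+\deg X_1=0$, while $X_0$ has degree $-1$. So your $\Z$-grading is not a Lie algebra grading unless $\gamma_1=0$.
\item The same bracket shows that $X_0$ does not normalize $A$. Hence $A$ is not an ideal of $\mathscr{G}^{(1)}$, $\operatorname{ad}X_0$ is not a shift on $A$, and the claim $\varphi(A)=A$ on which your argument rests is lost.
\item That claim is in any case only announced, and it fails even for $n\geqslant1$ in degenerate cases. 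If $\pmb\gamma=\pmb0$, then $\mathscr{G}^{(1)}$ is abelian and every hyperplane in it is a codimension-one abelian ideal.
\end{itemize}

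The repair is to take $\deg X_0=n$ and $\deg X_i=i-1$ for $i\geqslant1$, and to drop $A$ entirely. This grading is homogeneous for every $n\in\ZY\cup\{-1\}$; for $n=-1$ it is the $y$-degree in \thref{IC0}(b). A degree-preserving bijection preserves the dimensions of the homogeneous components, and these separate the values of $n$:
\begin{itemize}
\item $\mathscr{G}(n,1,\dots)$ has a non-zero component of negative degree if and only if $n=-1$;
\item for $n\geqslant1$, its only $2$-dimensional component of positive degree is $\kk X_0\oplus\kk X_{n+1}$, in degree $n$.
\end{itemize}
Be aware that in your formalization (a) rests entirely on the choice of grading. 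With $\pmb\gamma=\pmb\epsilon=\pmb0$ and equal weight sequences, the two algebras coincide as Lie algebras for all $m,n$.

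The paper argues differently: it compares minimal generating sets of regular elements.
\begin{itemize}
\item For $n=-1$ the algebra is infinitely generated, since each $\kk Y_1\oplus\kk Y_2\oplus\bigoplus_{i\leqslant R}\kk X_i$ is a subalgebra.
\item For $n\geqslant1$ with all $\gamma_i\neq0$, finitely many regular generators suffice, and their number depends on $n$.
\end{itemize}
The first point even separates $n=-1$ from $n\geqslant1$ (with non-zero $\gamma_i$) without reference to any grading.
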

\begin{proof}
	{\bf (a) : }If $n=-1$, then for any $r\in \ZZ$,  $\{Y_1,Y_2,X_i\mid i\geqslant r\}$ is a generating set of $\mathscr{G}(-1,1,\pmb\lambda,\pmb\alpha,\pmb\gamma)$.
	%	Moreover, it follows from the definition of Lie bracket in this case that
	Therefore, $\mathscr{G}(-1,1,\pmb\lambda,\pmb\alpha,\pmb\gamma)$ is infinitely generated.
	
	Note that for any $n\in \ZY$, $\{Y_1,Y_2,X_0, \dots, X_{n-1}\}$ is the smallest generating set of $\mathscr{G}(n,1,\pmb\lambda,\pmb\alpha,\pmb\gamma)$, consisting of regular elements.
	Therefore, if $m\neq n$ then $\mathscr{G}(m,1,\pmb\mu,\pmb\beta,\pmb\epsilon)$ is not graded isomorphic to $\mathscr{G}(n,1,\pmb\lambda,\pmb\alpha,\pmb\gamma)$.
	
	\smallskip
	\noindent
	{\bf (b) : } According to Section~\ref{N} notation~$21$, the Lie algebra structure  
 $\mathscr{G}(n,1,\pmb\lambda,\pmb\alpha,\pmb\gamma)$ on the vector space 
	$\kk Y_1\oplus \kk Y_2\oplus\bigg(\overunderset{\infty}{i=0}{\oplus}\kk X_i\bigg)$ is defined as follows:
	\begin{center}
		$[Y_1,X_0]=X_0=[Y_2,X_0]$, $[Y_1,X_i]=\lambda_i X_i$, $[Y_2,X_i]=\alpha_i X_i$, $[X_0,X_i]=\gamma_i X_{i+n},\, \forall i\geqslant 1,$ 
	\end{center}
	and the Lie algebra structure  $\mathscr{G}(n,1,\pmb\mu,\pmb\beta,\pmb\epsilon)$ on the vector space 
	$\kk Y_1\oplus \kk Y_2\oplus\bigg(\overunderset{\infty}{i=0}{\oplus}\kk W_i\bigg)$ is defined as follows:
	\begin{center}
		$[Y_1,W_0]=W_0=[Y_2,W_0]$, $[Y_1,W_i]=\mu_i W_i$, $[Y_2,W_i]=\beta_i W_i$, $[X_0,W_i]=\epsilon_i W_{i+n},\, \forall i\geqslant 1$. 
	\end{center}	
	Let $a_{11}=\bigg(\dfrac{\lambda_1-\beta_1}{\mu_1-\beta_1}\bigg)$,  $a_{22} = \bigg(\dfrac{\mu_1-\alpha_1}{\mu_1-\beta_1}\bigg)$ and $s=a_{11}+a_{22}-1=\bigg(\dfrac{\lambda_1-\alpha_1}{\mu_1-\beta_1}\bigg)$.
%	Then $a_{11},\, a_{22}$ and $s$ are all constants, not depending on $i$.
	Then $s\neq 0,$ since $\lambda_1\neq \alpha_1$.
	Next, let $b_{11}=\dfrac{a_{22}}{s}=\bigg(\dfrac{\mu_1-\alpha_1}{\lambda_1-\alpha_1}\bigg)$ and $b_{22}=\dfrac{a_{11}}{s}=\bigg(\dfrac{\lambda_1-\beta_1}{\lambda_1-\alpha_1}\bigg)$.
	Then the maps
$$\phi :  \mathscr{G}(n,1,\pmb\lambda,\pmb\alpha,\pmb\gamma)\rightarrow \mathscr{G}(n,1,\pmb\mu,\pmb\beta,\pmb\epsilon) \text{ defined by:}$$
		$$\begin{array}{cccc}
		\phi(Y_1)&=&a_{11}Y_1+(1-a_{11})Y_2 &\\ \phi(Y_1)&=&(1-a_{22})Y_1+a_{22}Y_2&\\ 
	    \phi(X_i)&=&W_i, & 0\leqslant i\leqslant n
%	\end{center}	
	\end{array}$$
and
%	 extended with help of Lie brackets
%	is an isomorphism, because
	  $$\varphi :  \mathscr{G}(n,1,\pmb\mu,\pmb\beta,\pmb\epsilon)\rightarrow \mathscr{G}(n,1,\pmb\lambda,\pmb\alpha,\pmb\gamma) \text{ defined by:}$$
	$$\begin{array}{cccc}
		\varphi(Y_1) &=&b_{11}Y_1+(1-b_{11})Y_2&\\ \varphi(Y_1)&=&(1-b_{22})Y_1+b_{22}Y_2&\\
        \varphi(W_i)&=&X_i,  &0\leqslant i\leqslant n
	\end{array}$$
	are Lie algebra homomorphisms. 
	Note that $\phi^{-1}=\varphi$.
	Hence, the result follows. 
\end{proof}

%Note that $\underline\ell^{(n,0)}=\ell^{(0,-1)}$ for all $n\in \ZY\cup\{-1\}$.
Recall that $\underline{\ell}^{(n,0)}=\ell^{(0,-1)}$, for all $n\in \ZY\cup\{-1\}$.
Therefore,
\begin{center}
Lie($\mathfrak{t}_2,\underline{\ell}^{(n,0)},(n,0)$) = Lie($\mathfrak{t}_2,{\ell}^{(0,-1)},(n,0)$).
\end{center}
Consider the set $\mathscr{A}:=\{Lie(\mathfrak{t}_2,\ell^{(0,-1)}, (n,0))\mid n\in \ZY\cup \{-1\}\}$ of regular Borel subalgebras of derived length $3$.
Now, we prove the main result of this subsection.
\begin{prop}\thlabel{IC2}
The following statements hold:
\begin{enumerate}[\rm(a)]
	\item For any $(a,b)\in \ZZ\cup \{(0,-1),(0,-1)\}$ with $a\neq b$,
		$$\begin{array}{lll}
	Lie(\mathfrak{t}_2,\underline\ell^{(a,b)}, (a,b))\cong \biggl\{\begin{aligned}
			&\hspace{9 mm}Lie(\mathfrak{t}_2,\ell^{(0,-1)}, (-1,0))\in \mathscr{A}, \text{ if }(a,b)\in \{(0,-1),(-1,0)\} \\
			&Lie(\mathfrak{t}_2,\ell^{(0,-1)},(\gcd(a,b),0))\in \mathscr{A}, \text{ if }(a,b)\in \ZZ^2 \text{ with }a\neq b.
		\end{aligned}	
	\end{array}$$
	\item  Two distinct elements of $\mathscr{A}$ are not graded isomorphic to one another.
\end{enumerate}	
\end{prop}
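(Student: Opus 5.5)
Part (a) will be proved by first writing each side in the normal form of \thref{IC0} and then applying \thref{IC1}(b). When $(a,b)\in\{(0,-1),(-1,0)\}$ there is nothing new to do: by \thref{IC0}(b), both Lie$(\mathfrak{t}_2,\underline{\ell}^{(0,-1)},(0,-1))$ and Lie$(\mathfrak{t}_2,\underline{\ell}^{(-1,0)},(-1,0))$ equal $\mathscr{G}(-1,1,(2-i)_i,(-i)_i,(i-1)_i)$. Since $\underline{\ell}^{(-1,0)}=\ell^{(0,-1)}$, the second of these is Lie$(\mathfrak{t}_2,\ell^{(0,-1)},(-1,0))\in\mathscr{A}$.

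For $(a,b)\in\ZZ^2$ with $a\neq b$, set $d=\gcd(a,b)$. By \thref{IC0}(c), Lie$(\mathfrak{t}_2,\underline\ell^{(a,b)},(a,b))=\mathscr{G}(d,1,\pmb\lambda,\pmb\alpha,\pmb\gamma)$, and the same formula with $(a,b)$ replaced by $(d,0)$ gives Lie$(\mathfrak{t}_2,\ell^{(0,-1)},(d,0))=\mathscr{G}(d,1,\pmb\mu,\pmb\beta,\pmb\epsilon)$. Here the minimal point of $\underline{\ell}^{(d,0)}\cap\mathcal{M}=\ell^{(0,-1)}\cap\mathcal{M}$ is $(0,-1)$, so
\[
\mu_i=\beta_i=\frac{i-2}{d}+\frac{1}{d}\cdot 0+\frac{1}{d}=\frac{i}{d}-\frac{1}{d}+\frac{1}{d}\cdot 0\ldots
\]
More precisely, one gets $\mu_i=\frac{-1}{d}+\frac{i-1}{d}$ and $\beta_i=\frac{1}{d}+\frac{i-1}{d}$, i.e. $(\frac{i-2}{d})_i$ and $(\frac{i}{d})_i$, together with $\epsilon_i=1+(i-1)=i$.

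It then remains to check the hypotheses of \thref{IC1}(b). First, each $\gamma_i$ and each $\epsilon_i$ must be nonzero. For $\gamma_i$ this holds because $\gamma_i$ is the structure constant $[\partial_{(a,b)},\partial_{(e,f)}]$ with $(e,f)\in\underline{\ell}^{(a,b)}$; since $a\neq b$, the point $(a,b)$ does not lie on $\ell^{(e,f)}$, so $\gamma_i\neq0$ by \thref{P3}. Second, we need $\lambda_i\neq\alpha_i$ and $\mu_i\neq\beta_i$, and for this we compute
\[
\lambda_i-\alpha_i=\frac{p+q}{a+b}-\frac{p-q}{a-b},
\]
which does not depend on $i$. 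It is nonzero precisely because $(p,q)\notin\ell_{(a,b)}$; this is forced, since $(p,q)$ lies on the parallel line $\underline{\ell}^{(a,b)}\neq\ell_{(a,b)}$. For the $\pmb\mu,\pmb\beta$ side we have simply $\mu_i-\beta_i=-2/d\neq0$. Third, the two ratios required in \thref{IC1}(b) must be independent of $i$. Because $\lambda_i,\alpha_i,\mu_i,\beta_i$ are all affine in $i$ with the same slope $1/d$, every difference among them is constant in $i$, and so both ratios are constant.

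The main obstacle I anticipate is that \thref{IC1}(b) only defines its map on the generators $Y_1,Y_2,X_0,\dots,X_n$. The brackets $[X_0,X_i]=\gamma_iX_{i+n}$ then produce rescalings $X_i\mapsto c_iW_i$ with $c_{i+n}=c_i\gamma_i/\epsilon_i$, and one has to make sure these rescalings are consistent. This is fine because the recursion determines each $c_i$ uniquely from $c_1,\dots,c_n$ and no relation constrains them further. I would also verify directly that the proposed images of $Y_1,Y_2$ act on every $W_i$ by the right eigenvalues, which comes down to the affine-in-$i$ observation above. Finally, (b) follows from \thref{IC1}(a): the elements of $\mathscr{A}$ are $\mathscr{G}(n,1,\dots)$ for distinct $n\in\ZY\cup\{-1\}$, and these are pairwise not graded isomorphic.
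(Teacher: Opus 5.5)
Your proposal is correct and follows the paper's proof essentially step for step. Both put the two algebras in the normal form of \thref{IC0}, with $(p,q)=(0,-1)$ for $(d,0)$. Both then check the hypotheses of \thref{IC1}(b): all four eigenvalue sequences are affine in $i$ with slope $1/d$, and $\gamma_i\neq 0$ and $\lambda_i\neq\alpha_i$ because $a\neq b$. Part (b) then follows from \thref{IC1}(a).

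Two cosmetic slips need fixing. First, delete the garbled display asserting $\mu_i=\beta_i$, since it contradicts the $\mu_i\neq\beta_i$ you need later. Second, compatibility of $X_i\mapsto c_iW_i$ (with $X_0\mapsto W_0$) with $[X_0,X_i]=\gamma_iX_{i+n}$ actually forces $c_{i+n}=c_i\epsilon_i/\gamma_i$, not $c_i\gamma_i/\epsilon_i$. This recursion is still well defined and nonzero because $\gamma_i,\epsilon_i\neq 0$.
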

\begin{proof}
	Note that from \thref{IC0}(c), it follows that, for any $ n\in \ZY$,
	\begin{equation}\label{ICE1}
		\text{Lie}(\mathfrak{t}_2,\underline\ell^{(n,0)}, (n,0))=	
		\text{Lie}(\mathfrak{t}_2,\ell^{(0,-1)}, (n,0)) =\mathscr{G}(n,1,({(i-2)}/{n})_i,({i}/{n})_i,(i)_i).
	\end{equation}
{\bf (a) : }If $(a,b)\in \{(0,-1),(-1,0)\}$ then by
\thref{IC0}(b),
 we know that% the commutation relations \eqref{E19}, \eqref{E17}, \eqref{E18}, \eqref{E29} and \eqref{E32}, we can conclude that 
\begin{center}Lie($\mathfrak{t}_2,\ell^{(-1,0)}, (0,-1)$)
%	Lie($\mathfrak{t}_2,\underline\ell^{(-1,0)}, (-1,0)$)=	
 $=\mathscr{G}(-1,1,(2-i)_i, (-i)_i, (i-1)_i)$= Lie($\mathfrak{t}_2,\ell^{(0,-1)}, (-1,0)$) $\in\mathscr{A}$.
\end{center}

Let $(a,b)\in \ZZ^2$ with $a\neq b$ and $d:=\gcd(a,b)\in \ZY$.
Then by \thref{IC0}(c), we  know that
\begin{center}
		Lie($\mathfrak{t}_2,\underline\ell^{(a,b)}, (a,b)$)	
%Lie($\mathfrak{t}_2,\ell^{(0,-1)}, (n,0)$) 
$=\mathscr{G}\big(d,1,(\lambda)_i, (\alpha)_i, (\gamma)_i\big)$, where 
\end{center}
 $\lambda_i=\dfrac{p+q}{a+b}+\dfrac{i-1}{d},\, 
\alpha_i=\dfrac{p-q}{a-b}+\dfrac{i-1}{d},\,
\gamma_i= (p-q)+\dfrac{(i-1)(a-b)}{d}$
and $(p,q)$ is the  minimum element of $\underline{\ell}^{(a,b)}\cap \mathcal{M}$ with respect to the lexicographic ordering.
Now $a\neq b$; hence, $\ell^{(a,b)}\cap\underline\ell^{(a,b)}\cap\mathcal{M}=\emptyset.$
Therefore, $\gamma_i\in \kk^{*}$, for all $i\in \ZY$.
Moreover, from \eqref{ICE1}, we know that
\begin{center}
	Lie($\mathfrak{t}_2,\underline\ell^{(d,0)}, (d,0)$)=	
	Lie($\mathfrak{t}_2,\ell^{(0,-1)}, (d,0)$) 
	$=\mathscr{G}\big(d,1,(\mu)_i, (\beta)_i, (\epsilon)_i\big)$, where 
\end{center}
$$\mu_i={(i-2)}/{d},\, 
\beta_i={i}/{d},\,
\epsilon_i= i.$$
%Next, we will use \thref{IC1}(b) and show that Lie($\mathfrak{t}_2,\underline\ell^{(a,b)}, (a,b)$) $\cong$ Lie($\mathfrak{t}_2,\ell^{(0,-1)}, (d,0)$).

Now, we check all the assumptions of \thref{IC1}(b) for $\mathscr{G}\big(d,1,\pmb{\lambda}, \pmb{\alpha}, \pmb{\gamma}\big)$ and $\mathscr{G}\big(d,1,\pmb\mu, \pmb\beta, \pmb\epsilon\big)$.
First,
$$\lambda_i-\alpha_i=\frac{2(aq-bp)}{(a+b)(a-b)}=\frac{-2}{a+b}\in \kk^{*},\, \text{ since }(p,q)\in \underline{\ell}^{(a,b)}.$$
%
% $(p_1,q_1)\in \underline{\ell}^{(a,b)}$ and $(a,b)\notin\underline{\ell}^{(a,b)} $, hence $\gamma_i\neq 0$, for all $i$.
%
%
%
Next, 
$$\mu_i-\beta_i=\frac{i-2}{d}-\frac{i}{d}=\frac{-2}{d}\in \kk^{*},$$
$$\dfrac{\lambda_i-\beta_i}{\mu_i-\beta_i}=\dfrac{a+b-d(p+q)}{2(a+b)},
\text{ does not depend on } i
$$
and
$$\dfrac{\mu_i-\alpha_i}{\mu_i-\beta_i}=\dfrac{a-b+d(p-q)}{2(a-b)},
\text{ does not depend on } i.$$
Therefore, by \thref{IC1}(b), we have	$$\text{Lie}(\mathfrak{t}_2,\underline\ell^{(a,b)}, (a,b))\cong \text{Lie}(\mathfrak{t}_2,\ell^{(0,-1)},(d,0))=
\text{Lie}(\mathfrak{t}_2,\ell^{(0,-1)},(\gcd(a,b),0)).$$
\smallskip
\noindent
{\bf (b) : }
	From 
\thref{IC0}(b), we know that
\begin{center}
	Lie$(\mathfrak{t}_2,\underline{\ell}^{(-1,0)}, (-1,0))$ =
	Lie$(\mathfrak{t}_2,{\ell}^{(0,-1)}, (-1,0))$
	%		(or Lie$(\mathfrak{t}_2,\underline{\ell}^{(-1,0)}, (-1,0))$)
	$= \mathscr{G}(-1,1,(2-i)_i,(-i)_i,(i-1)_i)$
	\end{center}
and from \eqref{ICE1}, for any $n\in \ZY$, we have
\begin{equation*}
	\text{Lie}(\mathfrak{t}_2,\underline\ell^{(n,0)}, (n,0))=	\text{Lie}(\mathfrak{t}_2,\ell^{(0,-1)}, (n,0)) =\mathscr{G}(n,1,({(i-2)}/{n})_i,({i}/{n})_i,(i)_i).
\end{equation*}
Now by \thref{IC1}(a), the result follows.
\end{proof}
\begin{rem}\thlabel{Rm6}
	{\rm
		From the above result (Proposition~\ref{IC2}), we get a list of regular Borel subalgebras up to isomorphism; precisely, the following:
\begin{center}
Lie($\Delta,\ell_{(1,1)}$) and Lie($\mathfrak{t}_2,\ell^{(0,-1)},(a,0)$) for $a\in \ZY\cup\{-1\}$.	
\end{center}
%All other subalgebras are isomorphic to one of them.
	}
\end{rem}
\noindent
It is clear that they are not graded isomorphic to each other (cf. \thref{IC2}(b)).
Moreover,
% Lie$(\mathfrak{t}_2,\underline{\ell}^{(0,-1)}, (0,-1))$ and 
Lie$(\mathfrak{t}_2,{\ell}^{(0,-1)}, (-1,0))$ is infinitely generated.
So 
Lie$(\mathfrak{t}_2,{\ell}^{(0,-1)}, (-1,0))$ is not isomorphic to Lie$(\mathfrak{t}_2,{\ell}^{(0,-1)},(a,0))$, for $a\in \ZY$.
%with $(a,b)\in \ZZ^2$ with $a\neq b$.
But we do not know whether for $a,c~\in~\ZY$ with $a\neq c$, the  subalgebra Lie($\mathfrak{t}_2,\ell^{(0,-1)},(a,0)$) is isomorphic to Lie($\mathfrak{t}_2,\ell^{(0,-1)},(c,0)$) or not.

\medskip
\noindent		
{\bf Acknowledgements.}
The author is grateful to Ivan Arzhantsev for introducing her to Lie theory, suggesting to study the regular Borel subalgebras of Lie(Aut($\mathbb{A}^2$)), carefully going through the earlier draft and giving valuable suggestions.
The author is also thankful to Yulia Zaitseva for useful discussions and Ivan Beldiev for pointing out several misprints. 
% The author acknowledges the HSE University.

This article is an output of a research project (HSE-BR-2025-22) implemented as part of the Basic Research Program at HSE University.

			\end{document}